
\documentclass{AIMS}

\usepackage{amsmath}
  \usepackage{paralist}
  \usepackage{graphics} 
  \usepackage{epsfig} 
\usepackage{graphicx}  \usepackage{epstopdf}
 \usepackage[colorlinks=true]{hyperref}
\usepackage{cleveref}
\usepackage{enumitem}
\hypersetup{urlcolor=blue, citecolor=red}

 \textheight=8.2 true in
   \textwidth=5.0 true in
    \topmargin 30pt
     \setcounter{page}{1}

%


\newtheorem{theorem}{Theorem}[section]
\newtheorem{corollary}[theorem]{Corollary}

\newtheorem{lemma}[theorem]{Lemma}
\newtheorem{proposition}[theorem]{Proposition}

\theoremstyle{definition}
\newtheorem{definition}[theorem]{Definition}
\newtheorem{remark}[theorem]{Remark}

\newtheorem*{brc}{Boundary Regularity Condition}

\newcommand{\abs}[1]{\left\vert#1\right\vert} 
\newcommand{\norm}[1]{\left\|#1\right\|} 
\newcommand{\pare}[1]{\left(#1\right)} 
\newcommand{\braces}[1]{\left\{#1\right\}} 
\newcommand{\brackets}[1]{\left[#1\right]} 
\newcommand{\set}[2]{\left\{#1 \; :\; #2\right\}} 
\newcommand{\prodin}[2]{\langle #1,#2 \rangle} 

\DeclareMathOperator*{\tr}{trace} 
\def\dist{\mathop{\mbox{\normalfont dist}}\limits} 

\newcommand{\dL}{d\mathcal{L}^{n-1}} 
\newcommand{\R}{\mathbb R} 
\newcommand{\N}{\mathbb N} 
\newcommand{\E}{\mathbb E} 

\def\Xint#1{\mathchoice
   {\XXint\displaystyle\textstyle{#1}}%
   {\XXint\textstyle\scriptstyle{#1}}%
   {\XXint\scriptstyle\scriptscriptstyle{#1}}%
   {\XXint\scriptscriptstyle\scriptscriptstyle{#1}}%
   \!\int}
\def\XXint#1#2#3{{\setbox0=\hbox{$#1{#2#3}{\int}$}
     \vcenter{\hbox{$#2#3$}}\kern-.5\wd0}}

\def\dashint{\Xint-}

\def\vint_#1{\mathchoice%
          {\mathop{\kern 0.2em\vrule width 0.6em height 0.69678ex depth -0.58065ex
                  \kern -0.8em \intop}\nolimits_{\kern -0.4em#1}}%
          {\mathop{\kern 0.1em\vrule width 0.5em height 0.69678ex depth -0.60387ex
                  \kern -0.6em \intop}\nolimits_{#1}}%
          {\mathop{\kern 0.1em\vrule width 0.5em height 0.69678ex
              depth -0.60387ex
                  \kern -0.6em \intop}\nolimits_{#1}}%
          {\mathop{\kern 0.1em\vrule width 0.5em height 0.69678ex depth -0.60387ex
                  \kern -0.6em \intop}\nolimits_{#1}}}
\def\vintslides_#1{\mathchoice%
          {\mathop{\kern 0.1em\vrule width 0.5em height 0.697ex depth -0.581ex
                  \kern -0.6em \intop}\nolimits_{\kern -0.4em#1}}%
          {\mathop{\kern 0.1em\vrule width 0.3em height 0.697ex depth -0.604ex
                  \kern -0.4em \intop}\nolimits_{#1}}%
          {\mathop{\kern 0.1em\vrule width 0.3em height 0.697ex depth -0.604ex
                  \kern -0.4em \intop}\nolimits_{#1}}%
          {\mathop{\kern 0.1em\vrule width 0.3em height 0.697ex depth -0.604ex
                  \kern -0.4em \intop}\nolimits_{#1}}}

\newcommand{\aveint}[2]{\mathchoice%
          {\mathop{\kern 0.2em\vrule width 0.6em height 0.69678ex depth -0.58065ex
                  \kern -0.8em \intop}\nolimits_{\kern -0.45em#1}^{#2}}%
          {\mathop{\kern 0.1em\vrule width 0.5em height 0.69678ex depth -0.60387ex
                  \kern -0.6em \intop}\nolimits_{#1}^{#2}}%
          {\mathop{\kern 0.1em\vrule width 0.5em height 0.69678ex depth -0.60387ex
                  \kern -0.6em \intop}\nolimits_{#1}^{#2}}%
          {\mathop{\kern 0.1em\vrule width 0.5em height 0.69678ex depth -0.60387ex
                  \kern -0.6em \intop}\nolimits_{#1}^{#2}}}

\newcommand{\ol}{\overline}
\newcommand{\Om}{\Omega}
\newcommand{\I}{\textrm{I}}
\newcommand{\II}{\textrm{II}}

\newcommand{\diam}{\operatorname{diam}}

\title[Tug-of-war games and the $p(x)$-Laplacian] 
      {Tug-of-war games with varying probabilities and the normalized $p(x)$-Laplacian}

\author[\'{A}. Arroyo, J. Heino and M. Parviainen]{}

\subjclass{35J60, 35J92, 35B65, 91A15.}
 \keywords{Coupling of stochastic processes, dynamic programming principle, local H\"{o}lder continuity, normalized $p(x)$-Laplacian, stochastic games, tug-of-war, viscosity solutions.}

 \email{arroyo@mat.uab.cat}
 \email{joonas.heino@jyu.fi}
 \email{mikko.j.parviainen@jyu.fi}

\begin{document}
\maketitle

\centerline{\scshape \'{A}ngel Arroyo}
\medskip
{\footnotesize
 \centerline{Departament de Matem\`{a}tiques, Universitat Aut\`{o}noma de Barcelona}
     \centerline{ 08193 Bellaterra, Barcelona, Spain}
} 

\medskip

\centerline{\scshape Joonas Heino and Mikko Parviainen}
\medskip
{\footnotesize
 \centerline{ Department of Mathematics and Statistics, University of Jyv\"{a}skyl\"{a}}
   \centerline{ PO Box 35, FI-40014 Jyv\"{a}skyl\"{a}, Finland}

}
\bigskip

 \centerline{}

\begin{abstract}
We study a two player zero-sum tug-of-war game with varying probabilities that depend on the game location $x$. In particular, we show that the value of the game is locally asymptotically H\"{o}lder continuous. The main difficulty is the loss of translation invariance. We also show the existence and uniqueness of values of the game. As an application, we prove that the value function of the game converges to a viscosity solution of the normalized $p(x)$-Laplacian.
\end{abstract}

\section{Introduction}
The seminal works of Crandall, Evans, Ishii, Lions, Souganidis and others established a  connection between the stochastic differential games and viscosity solution to Bellman-Isaacs equations in the early 80s. However, a similar connection between the 
$p$-Laplace or $\infty$-Laplace equations and the tug-of-war games with noise was discovered only rather recently in \cite{peress08,peresssw09}.

In this paper we study a tug-of-war with noise with space dependent probabilities, which is a natural generalization of the original tug-of-war both from mathematical and application point of views. In particular, we prove that the value functions of the game in this setting are asymptotically H\"{o}lder continuous, Theorem \ref{LOCREG}. Here the main difficulty is the loss of translation invariance so that the global or local regularity methods in \cite{peress08}, \cite{manfredipr12} or \cite{luirops13} are not directly applicable. Instead, we employ the method in \cite{luirop}.

 The main idea is to consider two game sequences simultaneously. Heuristically speaking, in a higher dimensional space, the sequences can be linked to a single higher dimensional game by introducing a probability measure that has the measures of the original game as marginals through  suitable couplings.
%
It is interesting to note that couplings of stochastic processes can be employed in the study of regularity for second order linear uniformly parabolic equations with continuous highest order coefficients, see for example  \cite{lindvallr93}, \cite{priolaw06}, and \cite{kusuoka15}. The method has also some similarities to the Ishii-Lions method \cite{ishiil90}, see also \cite{porrettap13}. However, the method we use does not rely on the theorem of sums in the theory of viscosity solutions nor does it use stochastic tools. Indeed, it applies directly to functions satisfying a dynamic programming equation whether they arise from the stochastic games or numerical methods to PDEs.

One of the key tools in studying the tug-of-war games is the dynamic programming principle. For the game in this paper, the dynamic programming principle (DPP) reads as
\begin{equation}\label{DPP}
\begin{split}
				u(x) = ~& \displaystyle\frac{1-\delta(x)}{2}\Bigg[\displaystyle \sup_{\abs\nu=\varepsilon}\pare{\alpha(x)u(x+\nu)+\beta(x)\dashint_{B_\varepsilon^\nu} u(x+h)\dL(h)} \\
				& + \displaystyle \inf_{\abs\nu=\varepsilon}\pare{\alpha(x)u(x+\nu)+\beta(x)\dashint_{B_\varepsilon^\nu} u(x+h)\dL(h)}\Bigg] + \delta(x)F(x)
\end{split}
\end{equation}
with a given boundary cut-off function $\delta$, a boundary function $F$ and probability functions $\alpha(x),\beta(x)$. Here, $B_\varepsilon^\nu$ denotes the $(n-1)$-dimensional ball orthogonal to $\nu$. For more details, see Section \ref{sec:prelim}. Heuristic idea behind the DPP is that the value at a point can be obtained by considering a single step in the game and summing up all the possible outcomes. At the point $x$, the game continues with a probability $1-\delta(x)$. In this case, the maximizer selects the direction $\nu_{\text{max}}$ of fixed radius maximizing the expected payoff at the point. Similarly, the minimizer selects the direction $\nu_{\text{min}}$ of the same radius minimizing the expectation. Then with a probability $\alpha(x)/2$, the game moves to $x+\nu_{\text{max}}$ in the single step, and with the same probability, the game moves to $x+\nu_{\text{min}}$. With a probability $\beta(x)/2$, the next game point is $x+\nu_{\text{max}}'$, where $\nu_{\text{max}}'$ is chosen according to the uniform distribution in a $(n-1)$-dimensional ball orthogonal to $\nu_{\text{max}}$. Similarly with the same probability, the next game point is $x+\nu_{\text{min}}'$, where $\nu_{\text{min}}'$ is chosen uniformly random from a $(n-1)$-dimensional ball orthogonal to $\nu_{\text{min}}$. If the game on the other hand stops at $x$, the payoff is given by the boundary function $F$ at the point.

The first step in the paper is to show that a value function satisfies the dynamic programming principle above and that the value is unique. This is Theorem \ref{value unique}. We first prove existence of a measurable function satisfying the DPP by iterating the operator on the right hand side of (\ref{DPP}). To this end, we guarantee the continuity and thus Borel measurability of the iterands by the boundary correction in the DPP above. Otherwise it is difficult to guarantee the measurability in such iterations.
Then, the uniqueness and the continuity of the solution is obtained by using game theoretic arguments. In particular, we show that the solution coincides with the game value.

As an application, by using the regularity result, Arzel\`{a}-Ascoli's theorem and the DPP, we show in Theorem \ref{theorem:converge} that the values of the game converge to a continuous viscosity solution of the normalized $p(x)$-Laplace equation
 \[
\begin{split}
\Delta_{p(x)}^Nu(x):=\Delta u(x)+(p(x)-2)\Delta^N_\infty u(x)=0,
\end{split}
\]
where $\Delta^N_\infty u:=\abs{\nabla u}^{-2}\sum_{i,j=1}^{n}u_{x_ix_j}u_{x_i}u_{x_j}$ is the normalized infinity Laplacian, and $p:\ol \Om\to (1,\infty)$ is a continuous function on the closure of the game domain $\Omega$ with $\inf_\Omega p>1$ and $\sup_\Omega p<\infty$.
%
Observe that we cover the range $1<p(x)<\infty$. To guarantee that the limit takes the same boundary values, we need boundary estimates which are obtained in Theorem \ref{boundary regularity} by using barrier arguments.

\section{Preliminaries}\label{sec:prelim}
Fix $n\geq 2$ and $\varepsilon>0$ and let $\Omega\subset \mathbb R^n$ be a bounded domain. For measurability reasons, we need the boundary correction function $\delta$  in the dynamic programming principle. Thus, we define the following open sets
\begin{align*}
				I_\varepsilon & =  \{x\in\Omega: \dist(x,\partial\Omega)<\varepsilon\}, \\
				O_\varepsilon & =  \{x\in \mathbb R^n\setminus \ol{\Omega}: \dist(x,\partial\Omega)<\varepsilon\}
\end{align*}
and the set $\Omega_\varepsilon:=\ol{\Omega} \cup O_\varepsilon$. The function $\delta:\ol{\Omega}_\varepsilon\rightarrow[0,1]$ is given by
\begin{equation*}
				\delta(x)=
				\left\{\begin{array}{ll}
								0 & \mbox{ if } x\in\Omega\setminus I_\varepsilon \\
								1-\varepsilon^{-1}\dist(x,\partial\Omega) & \mbox{ if } x\in I_\varepsilon \\
								1 & \mbox{ if } x\in \ol{O}_\varepsilon.
				\end{array}\right.
\end{equation*}
Let $p$ be a continuous function on $\overline\Omega$ satisfying
\begin{equation}\label{function p assump}
				1<p_{\text{min}}:=\inf_{x\in\Omega} p(x)\leq \sup_{x\in\Omega} p(x)=:p_{\text{max}}<\infty.
\end{equation}
We require the finite upper bound $p_{\text{max}}$ to make sure that the tug-of-war game defined below ends almost surely regardless of the strategies. Similarly, the upper bound comes into a play in the techniques we use in Section \ref{sect22}. On the other hand, the regularity and convergence results below require the lower bound in \eqref{function p assump} for the function $p$. To prove existence and uniqueness of continuous solutions to \eqref{DPP} in Section \ref{sec:exist and uniq}, we utilize the uniform continuity of $p$. In Sections \ref{sec:local regularity} and \ref{sec:boundary reg}, the regularity techniques do not require the continuity of $p$, but in Section \ref{sec:applications}, we apply the continuity of $p$.

We define the functions $\alpha,\beta:\overline\Omega\to (0,1)$ depending on $p(x)$ and the dimension $n$ by
\begin{equation*}
				\alpha(x)=\frac{p(x)-1}{p(x)+n} \ \ \mbox{ and } \ \ \beta(x)=1-\alpha(x)=\frac{n+1}{p(x)+n}.
\end{equation*}
By the assumptions on $p(x)$, the functions $\alpha$ and $\beta$ are uniformly continuous. In addition, we have
\begin{equation}\label{alpha assump}
\alpha_{\text{max}}:=\sup_{x\in \Omega}\alpha(x)<1 \ \ \mbox{ and } \ \ \alpha_{\text{min}}:=\inf_{x\in \Omega}\alpha(x)>0.
\end{equation}
We also denote $\beta_{\text{min}}:=1-\alpha_{\text{max}}>0$.

We consider averages of the form
\begin{equation*}
				\dashint_{B_\varepsilon^\nu} u(x+h)\dL(h):=\frac{1}{\mathcal L^{n-1}\big(B_\varepsilon^\nu\big)}\int_{B_\varepsilon^\nu} u(x+h)\dL(h),
\end{equation*}
where $\mathcal{L}^{n-1}$ denotes the $(n-1)$-dimensional Lebesgue measure. The open ball of radius $\varepsilon$ in the $(n-1)$-dimensional hyperplane $\nu^\bot$ orthogonal to $\nu\in \mathbb R^n$ is denoted by $B_\varepsilon^\nu$, i.e.,
\begin{equation*}
				B_\varepsilon^\nu:=B_\varepsilon(0)\cap\nu^\bot:=\{z\in \mathbb R^n:\abs z<\varepsilon \text{ and } \langle z,\nu\rangle =0\}.
\end{equation*}
Throughout the paper, we denote open $n$-dimensional balls of radius $r>0$ by $B_r(x)$ or by $B_r$, if the center point $x\in \mathbb R^n$ plays no role.

For brevity, the compact boundary strip of the game domain is denoted by
$$
\Gamma_{\varepsilon,\varepsilon}:=\ol{I}_\varepsilon\cup \ol{O}_\varepsilon.
$$
Let $F$ be a continuous boundary function $F:\Gamma_{\varepsilon,\varepsilon}\to \mathbb R$. In addition, we define an auxiliary function
\begin{equation}\label{Wu}
				W(x,\nu):=W(u;x,\nu):= \alpha(x)u(x+\nu)+\beta(x)\dashint_{B_\varepsilon^\nu} u(x+h)\dL(h)
\end{equation}
and an operator
\begin{align}
\begin{split}\label{Top}
				T_\varepsilon u(x) := ~& \frac{1-\delta(x)}{2}\Bigg[ \sup_{\abs\nu=\varepsilon}\Big(W(u;x,\nu)\Big)+\inf_{\abs\nu=\varepsilon}\Big(W(u;x,\nu)\Big)\Bigg]  + \delta(x)F(x)
\end{split}
\end{align}
for all $x\in \ol{\Omega}_\varepsilon$ and continuous functions $u\in C(\ol{\Omega}_\varepsilon)$. By using this operator, we can identify the solutions to \eqref{DPP} with the fixed points of $T_\varepsilon$. Note that, despite the fact that $\alpha(x)$ and $\beta(x)$ are not defined in the outside strip $\ol{\Omega}_\varepsilon\setminus \ol{\Omega}$, \eqref{Top} is well-defined by setting $T_\varepsilon u(x)=F(x)$ for all $x\in \ol{\Omega}_\varepsilon\setminus \ol{\Omega}$. Similarly, we set $\delta(x)F(x)=0$ for all $x\in \Omega\setminus I_\varepsilon$.

The same boundary correction as above is also applied in \cite{hartikainen16,luirops14}. For an alternative approach, see \cite{armstrongs12}. Here, this correction is used in order to preserve measurability when iterating the operator. Indeed, in such iterations the measurability can rather easily be lost, see for example \cite[Example 2.4]{luirops14}. In addition, an asymptotic expansion close to \eqref{DPP} is studied in \cite{kawohlmp12}.

\subsection{The two-player tug-of-war game}\label{section2.1}
In this subsection, we introduce the stochastic zero-sum tug-of-war game used in this work. Most of the methods of this paper arise from game theory, and some of the results are even directly proved by using game theory arguments (for example the uniqueness proof in \Cref{LEMMA-UINEQ}).

Let us  consider a game involving two players (say $P_\I$ and $P_{\II}$). A token is placed at a starting point $x_0\in\Omega$. Suppose that, after $j=0,1,2,\ldots$ movements, the token is at a point $x_j\in\Omega$. Then,
\begin{itemize}
\item if $x_j\in\Omega\setminus I_\varepsilon$, $P_\I$ and $P_{\II}$ decide their possible movements $\nu^\I_{j+1}$ and $\nu^{\II}_{j+1}$, respectively, with $\abs{\nu^\I_{j+1}}=\abs{\nu^{\II}_{j+1}}=\varepsilon$. A fair coin is tossed and if $P_i$ wins the toss, we have two possibilities
\begin{itemize}
\item with a probability $\alpha(x_j)$, the token is moved to $x_{j+1}=x_j+\nu^i_{j+1}$, and
\item with a probability $\beta(x_j)$, the token is moved to a point $x_{j+1}\in x_j+B_\varepsilon^{\nu^i_{j+1}}$ uniformly random
\end{itemize}
with $i\in\braces{\I,\II}$.
\item If $x_j\in I_\varepsilon \cup O_\varepsilon$,
\begin{itemize}
\item the game ends with a probability $\delta(x_j)$ and then, $P_{\II}$ pays $P_\I$ the amount given by $F(x_j)$, and
\item with a probability $1-\delta(x_j)$, the players play a game as in the previous case $x_j\in\Omega\setminus I_\varepsilon$.
\end{itemize}
\end{itemize}
Let $\tau$ denote the time when the game ends, and denote by $x_\tau\in \Gamma_{\varepsilon,\varepsilon}$ the position where the game ends. Then, $P_{\II}$ pays $P_\I$ the quantity $F(x_\tau)$. 

We can construct the game described above by the following procedure. Let $(c_j)_{j=0}^\infty$ be a sequence of random variables such that $c_j\in \{0,1\}=:\widetilde{C}$ for all $j\geq 0$ with $c_0:=0$. The random variable $c_j$ gives the information whether the $j$th movement of the game has been decided by playing the game. If it holds $c_j=0$, the position $x_j$ is selected by playing the game. On the other hand, if it holds $c_j=1$, we have $x_j=x_{j-1}$.

Let $\xi_0,\xi_1,\xi_2,\dots$ be independent and identically distributed random variables such that $\xi_0$ is distributed uniformly random on $[0,1]$. Moreover, the process $(\xi_j)_{j=0}^\infty$ is independent of the game process $(x_j)_{j=0}^\infty$. Then for all $j\geq1$, the  probability distribution of the random variable $c_{j}$ is determined by 
\begin{align*}
c_{j}&=\begin{cases}
0,&\text{if } \xi_{j-1}\leq1-\delta(x_{j-1}), \\
1,&\text{if } \xi_{j-1}>1-\delta(x_{j-1})
\end{cases}
\end{align*}
given that it holds $c_{j-1}=0$. If it holds $c_{j-1}=1$, then we have $c_j=1$. By this definition of $c_j$ for all $j\geq 1$, we can define the random variable $\tau$ by
\begin{equation}\label{rv tau}
\tau=\inf_{j\geq 0}\big\{c_{j+1}=1\big\}.
\end{equation}

We define a \textit{history} of the game as the vector $\big((c_0,x_0),(c_1,x_1),\ldots,(c_j,x_j)\big)$ describing the positions of the token and the information whether the positions had been taken by playing the game at each step after $j$ repetitions. A \textit{strategy} is a sequence of Borel measurable functions that gives the next game position given the history of the game. Therefore, we define $\mathcal{S}_i:=\mathcal (\mathcal S_i^j)_{j=1}^\infty$ with
\begin{equation*}
				\mathcal{S}_i^j:\big\{(c_0,x_0)\big\} \times \bigcup_{k=1}^{j-1}(\widetilde{C}\times\Omega_\varepsilon)^k\to \partial B_\varepsilon(0)
\end{equation*}
for all $j\in \mathbb N$ and with both $i\in\braces{\I,\II}$. For example, we have for $P_\I$ and for all $j\geq 1$ that
$$
\mathcal{S}_\I^j\Big(\big((c_0,x_0),\dots,(c_{j-1},x_{j-1})\big)\Big)=\nu_j^\I\in \partial B_\varepsilon(0).
$$
Given a starting point $x_0\in\Omega$ and strategies $\mathcal{S}_\I,\mathcal{S}_{\II}$, we define a probability measure $\mathbb{P}_{\mathcal{S}_\I,\mathcal{S}_{\II}}^{x_0}$ on the natural product $\sigma$-algebra of the space of all game trajectories. This measure is built by applying Kolmogorov's extension theorem to the family of transition densities
\begin{align*}
&\pi_{\mathcal{S}_\I,\mathcal{S}_{\II}}\big((c_0,x_0),(c_1,x_1),\dots,(c_{j},x_j);C,A \big) \\
&=\big(1-\delta(x_j)\big)\pi_{\mathcal{S}_\I,\mathcal{S}_{\II}}^{\text{local}}\big((x_0,x_1,\dots,x_j);A \big)\mathbb I_0(C)\mathbb I_{c_j}\big(\{0\}\big) \\
&\hspace{1em}+\delta(x_j)\mathbb I_{x_j}(A)\mathbb I_1(C)\mathbb I_{c_j}\big(\{0\}\big)+\mathbb I_{c_j}\big(\{1\}\big)\mathbb I_{x_{j}}(A)
\end{align*}
for all Borel subsets $A\subset\mathbb R^n$ and $C\subset \widetilde{C}$. With a slight abuse of the notation, for all points $z$ and sets $B$, the measure $\mathbb I_z(B)$ is one, if $z\in B$, and zero otherwise. Moreover, it holds
\begin{align*}
&\pi^{\text{local}}_{\mathcal{S}_\I,\mathcal{S}_{\II}}\big((x_0,\dots,x_{j});A \big)=\frac{1}{2}\bigg[\alpha(x_j)\Big(\mathbb I_{x_j+\nu_{j+1}^\I}(A)+\mathbb I_{x_j+\nu_{j+1}^\II}(A)\Big) \\
&+\frac{\beta(x_j)}{\omega_{n-1}\varepsilon^{n-1}}\bigg(\mathcal L^{n-1}\Big(B_\varepsilon^{\nu_{j+1}^\I}(x_j)\cap A\Big)+\mathcal L^{n-1}\Big(B_\varepsilon^{\nu_{j+1}^\II}(x_j)\cap A\Big)\bigg)\bigg]
\end{align*}
with the constant $\omega_{n-1}:=\mathcal L^{n-1}\big(B_1^z\big)$ for any $z\in \mathbb R^n\setminus \{0\}$. Furthermore, we denote $B_\varepsilon^z(y):=y+B_\varepsilon^z$ for $z\in \mathbb R^n \setminus \{0\}$ and $y\in \mathbb R^n$. 

Here, we follow the ideas from \cite{hartikainen16}, where the constant $\alpha$ case is covered. For the benefit of the reader and since the setting is slightly different, we give a self-contained proof.
\begin{lemma}\label{gamestop}
The game ends almost surely in finite time regardless of the strategies $\mathcal{S}_\I$ and $\mathcal{S}_{\II}$.
\end{lemma}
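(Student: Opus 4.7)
The strategy is to establish a uniform positive lower bound on the probability of termination in $N$ steps, for some $N$ depending only on $n$, $\varepsilon$, $\beta_{\text{min}}$, and $\diam(\Omega)$. Once one shows $\mathbb{P}^{x_0}_{\mathcal{S}_\I,\mathcal{S}_{\II}}(\tau\leq N)\geq c>0$ uniformly over starting points $x_0\in\overline{\Omega}$ and strategies, the strong Markov property (applied at times $kN$) yields $\mathbb{P}(\tau>kN)\leq(1-c)^k$, so $\tau<\infty$ almost surely.

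The key observation is that the players can influence the \emph{drift} of the process but not the \emph{diffusive} component: whenever a step turns out to be a $\beta$-move (a uniform random draw on a centered $(n-1)$-ball of radius $\varepsilon$), the conditional mean of the increment is zero and its expected squared norm equals $\tfrac{n-1}{n+1}\varepsilon^2$, regardless of the direction chosen by either player. I condition on the event $E_N$ that in each of the first $N$ steps $P_\I$ wins the fair coin toss and the $\alpha/\beta$-decision yields $\beta$. By the explicit product structure of the transition densities $\pi_{\mathcal{S}_\I,\mathcal{S}_{\II}}$ built in the preliminary subsection, these decisions are independent of the uniform-ball draws and of the stopping variables $\xi_j$, and each has conditional probability at least $\beta_{\text{min}}/2$, so $\mathbb{P}(E_N)\geq(\beta_{\text{min}}/2)^N$ uniformly in strategies and starting point.

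Let $(\tilde x_j)$ denote the ``unkilled'' process (following the movement rule regardless of stopping) and set $\tilde{\tau}_O:=\inf\{j:\tilde x_j\in O_\varepsilon\}$. On $E_N$ the process $Y_j:=|\tilde x_j|^2-j\tfrac{n-1}{n+1}\varepsilon^2$ is a conditional martingale, and applying optional stopping at the bounded time $\sigma:=\tilde{\tau}_O\wedge N$ together with the a priori bound $|\tilde x_\sigma|\leq R+\varepsilon$ (where $\overline{\Omega}\subset B_R$ and single-step displacements are at most $\varepsilon$) yields $\mathbb{E}[\sigma\mid E_N]\leq K$ with $K=(n+1)(R+\varepsilon)^2/((n-1)\varepsilon^2)$. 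Choosing $N>2K$ and applying Markov's inequality gives $\mathbb{P}(\tilde{\tau}_O\leq N-1\mid E_N)>1/2$. Since $\delta\equiv 1$ on $\overline{O}_\varepsilon$, the actual game must end no later than the first visit of $\tilde x_j$ to $O_\varepsilon$, so $\tau\leq\tilde{\tau}_O$ almost surely, and combining the bounds yields $\mathbb{P}(\tau\leq N)\geq\tfrac{1}{2}(\beta_{\text{min}}/2)^N$. The main technical obstacle is the bookkeeping needed to check that conditioning on $E_N$ does not bias the uniform distributions of the ball draws, so that $(Y_j)$ really is a martingale under $\mathbb{P}(\,\cdot\,\mid E_N)$; this is a direct consequence of the manifest conditional independence between the coin/$\alpha/\beta$ decisions and the ball/$\xi_j$ components encoded in $\pi^{\text{local}}_{\mathcal{S}_\I,\mathcal{S}_{\II}}$.
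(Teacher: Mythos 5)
Your proof is correct, but it follows a different route than the paper's. The paper also establishes a uniform lower bound on the probability of exiting within a fixed number of steps, but it does so with a bare-hands geometric argument: it identifies a subset $S\subset B_\varepsilon^{\nu}$ of (conditional) measure $1/4$ on which a $\beta$-move satisfies both $\abs{h_j}\geq\varepsilon/2$ and $\prodin{x_j}{h_j}\geq 0$, so that $\abs{x_{j+1}}^2\geq\abs{x_j}^2+\varepsilon^2/4$. Since each such ``good'' $\beta$-move occurs with probability at least $\theta:=\beta_{\text{min}}(1/4-2^{-(n+1)})>0$ independently of position and strategy, stringing together $j_0\approx\diam(\Omega)\varepsilon^{-2}$ of them forces an exit, giving a uniform exit probability $\theta^{j_0}$ and then iterating exactly as you do. You instead exploit the same zero-drift structure of $\beta$-moves through the martingale $Y_j=\abs{\tilde x_j}^2-j\tfrac{n-1}{n+1}\varepsilon^2$ on the conditioning event $E_N$, obtaining the exit bound via optional stopping and Markov's inequality. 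Your argument is tighter in spirit (it uses the full variance $\tfrac{n-1}{n+1}\varepsilon^2$ rather than the pessimistic $\varepsilon^2/4$ on a quarter of the moves, and it does not require engineering a ``good'' set $S$), at the cost of invoking the optional stopping theorem and some care with the conditioning bookkeeping you correctly flag. The paper's route is more elementary and self-contained: no martingale machinery, just a direct counting of favourable events, which is arguably better suited to the style of this part of the paper. Both yield the same conclusion by iteration with a uniform bound over strategies and starting points.
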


\begin{proof}
The idea of the proof is to consider solely random movements and to find a uniform lower bound for the probability of the event that the modulus of $\abs{x_j}$ grows in a suitable fashion. In the proof, we need the fact $\beta_{\text{min}}>0$.

Let $x_0\in\Omega$, $j\geq 0$ and let $x_{j+1}=x_j+h_j$, where $h_j$ represents the displacement at each step of the game. By the vector calculus, we have
\begin{equation*}
				\abs{x_{j+1}}^2 = \abs{x_j}^2+\abs{h_j}^2+2\prodin{x_j}{h_j}.
\end{equation*}
In addition by the definition of the game, $h_j$ is randomly chosen from $B_\varepsilon^{\nu}$ with a probability $\beta(x_j)/2$ for the vector $\nu:=\nu_{j+1}^\I$. Moreover, given that a random movement is chosen from $B_\varepsilon^\nu$, we have $\prodin{x_j}{h_j}\geq 0$ with a probability of at least $\frac{1}{2}$ and the event $\abs{h_j}\geq\frac{\varepsilon}{2}$ has a probability of
$$
1-\frac{\mathcal L^{n-1}\big(B_{\varepsilon/2}^\nu\big)}{\mathcal L^{n-1}\big(B_\varepsilon^\nu\big)}=1-2^{1-n}.
$$
Consequently, there is a positive probability of a random movement $h_j$ such that $|h_j|\geq \varepsilon/2$ and $\prodin{x_j}{h_j} \geq 0$. In this case, we have
\begin{equation}\label{growth of modulus}
				\abs{x_{j+1}}^2 \geq \abs{x_j}^2+\frac{\varepsilon^2}{4}
\end{equation}
with a probability of at least
\begin{equation*}
				\beta(x_j)\pare{\frac{1}{4}-\frac{1}{2^{n+1}}}\geq\beta_{\text{min}}\pare{\frac{1}{4}-\frac{1}{2^{n+1}}}=:\theta>0.
\end{equation*}
Note that the universal constant $\theta$ does not depend on $j$ and the fact $\beta_{\text{min}}>0$ implies $\theta>0$. Now, let
\begin{equation*}
				j_0:=j_0(\varepsilon,\Omega)=4\left\lceil\diam(\Omega)\varepsilon^{-2}\right\rceil\in\N.
\end{equation*}
Then, after $j_0$ consecutive movements in the way \eqref{growth of modulus} we have
\begin{equation*}
				\abs{x_{j_0}}^2\geq\abs{x_0}^2+j_0\frac{\varepsilon^2}{4}>|x_0|^2+\diam(\Omega).
\end{equation*}
Therefore, the token has exited the game domain after at most $j_0$ steps for any starting point $x_0$ with a probability of at least $\theta^{j_0}$. Consequently, the probability of not exiting the game domain after $j_0$ steps is bounded above by $1-\theta^{j_0}$.

By repeating $kj_0$ times the game, the probability of not exiting $\Omega$ after $kj_0$ steps is bounded above by
\begin{equation*}
				\left(1-\theta^{j_0}\right)^k.
\end{equation*}
Thus, by letting $k\rightarrow\infty$, this probability goes to zero, and the proof is completed.
\end{proof}

For all starting points $x_0\in\Omega$, we define a \textit{value function} for $P_\I$ and for $P_\II$ by
\begin{equation}\label{uIIIdef}
				\left\{\begin{array}{l}
								u_\I(x_0)=\displaystyle\sup_{\mathcal{S}_\I}\inf_{\mathcal{S}_{\II}}\E_{\mathcal{S}_\I,\mathcal{S}_{\II}}^{x_0}[F(x_\tau)],\\ 
								u_{\II}(x_0)=\displaystyle\inf_{\mathcal{S}_{\II}}\sup_{\mathcal{S}_\I}\E_{\mathcal{S}_\I,\mathcal{S}_{\II}}^{x_0}[F(x_\tau)].
				\end{array}\right.
\end{equation}

\section{Existence and uniqueness}\label{sec:exist and uniq}
In this section, the goal is to prove that there exists a unique continuous solution satisfying the dynamic programming principle \eqref{DPP}. The proof is divided into two parts. In \Cref{sect21}, by iterating the operator $T_\varepsilon$ defined in \eqref{Top}, we show that there exist a lower and an upper semicontinuous solution to \eqref{DPP}. Then in \Cref{sect22}, we show that every measurable solution to \eqref{DPP} is bounded between the lower and the upper semicontinuous solutions. Further, we prove by using the tug-of-war game defined in \Cref{section2.1} that, in fact, both semicontinuous solutions are the same.

\subsection{Existence of semicontinuous solutions to \eqref{DPP}}\label{sect21}

In this subsection, by iterating the operator $T_\varepsilon$, we construct monotone sequences of bounded continuous functions. As a consequence, these sequences converge to semicontinuous functions which turn out to be solutions to \eqref{DPP}. With that purpose, first, we need to show that $T_\varepsilon$ maps continuous functions into continuous functions.

\begin{lemma}\label{LEMMA-WCONT}
For any continuous function $u\in C(\overline\Omega_\varepsilon)$, the function $W(x,\nu)$ defined in \eqref{Wu} is continuous with respect to each variable on $\ol{\Omega} \times \partial B_\varepsilon(0)$.
\end{lemma}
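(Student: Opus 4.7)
The plan is to prove continuity separately in each variable; the continuity in $x$ (with $\nu$ fixed) is an easy uniform-continuity argument, while continuity in $\nu$ (with $x$ fixed) is the real content, requiring a local rotation to handle the moving $(n-1)$-dimensional integration plane. Throughout I would use that $u$ is uniformly continuous on the compact set $\overline{\Omega}_\varepsilon$ and that $\alpha$, $\beta$ are continuous on $\overline{\Omega}$.

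For continuity in $x$, fix $\nu\in\partial B_\varepsilon(0)$ and $x_0\in\overline{\Omega}$. The first term $\alpha(x)u(x+\nu)$ is continuous as the product of continuous functions. For the averaged term, write
\[
\dashint_{B_\varepsilon^\nu} u(x+h)\,\dL(h) - \dashint_{B_\varepsilon^\nu} u(x_0+h)\,\dL(h) = \dashint_{B_\varepsilon^\nu}\bigl(u(x+h)-u(x_0+h)\bigr)\,\dL(h),
\]
and bound the right-hand side by the modulus of continuity of $u$ evaluated at $|x-x_0|$. Multiplying by the continuous factor $\beta(x)$ gives continuity in $x$.

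For continuity in $\nu$, fix $x\in\overline{\Omega}$ and $\nu_0\in\partial B_\varepsilon(0)$. Again $\nu\mapsto \alpha(x)u(x+\nu)$ is continuous because $u$ is. The nontrivial step is the averaged term: as $\nu$ varies, the set $B_\varepsilon^\nu\subset\nu^{\perp}$ rotates. On a small neighborhood $U$ of $\nu_0$ in $\partial B_\varepsilon(0)$ I would define a continuous rotation $R_\nu\in SO(n)$ with $R_{\nu_0}=\iden$ and $R_\nu \nu_0 = \nu$ (for instance, the rotation by angle $\angle(\nu_0,\nu)$ inside the plane spanned by $\nu_0$ and $\nu$, and identity on its orthogonal complement). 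Then $R_\nu(\nu_0^{\perp})=\nu^{\perp}$ and $R_\nu(B_\varepsilon^{\nu_0})=B_\varepsilon^\nu$; since $R_\nu$ restricted to $\nu_0^{\perp}\to \nu^{\perp}$ is an isometry of Euclidean $(n-1)$-spaces, it preserves the $(n-1)$-dimensional Lebesgue measure, so the change of variables $h=R_\nu h'$ yields
\[
\dashint_{B_\varepsilon^\nu} u(x+h)\,\dL(h) = \dashint_{B_\varepsilon^{\nu_0}} u(x+R_\nu h')\,\dL(h').
\]
Because $\nu\mapsto R_\nu$ is continuous at $\nu_0$, the map $h'\mapsto R_\nu h'$ converges uniformly to the identity on the compact set $\overline{B_\varepsilon^{\nu_0}}$ as $\nu\to\nu_0$, and uniform continuity of $u$ on $\overline{\Omega}_\varepsilon$ then gives $u(x+R_\nu h')\to u(x+h')$ uniformly in $h'$. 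Passing to the limit under the average and combining with the continuity of $\beta(x)$ (constant in $\nu$ here) yields the continuity of $W(x,\cdot)$ at $\nu_0$.

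The only technical point to watch is the construction of the local continuous rotation $R_\nu$ and the verification that its restriction to $\nu_0^{\perp}$ sends $B_\varepsilon^{\nu_0}$ to $B_\varepsilon^\nu$ isometrically; once this is in place, the proof reduces to uniform continuity of $u$ on the compact set $\overline{\Omega}_\varepsilon$. I would expect no other obstacle: the whole lemma is a routine continuity statement, and the moving hyperplane is the only feature that distinguishes it from a one-line argument.
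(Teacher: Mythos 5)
Your proof is correct and takes essentially the same route as the paper: decompose $W$ into the $\alpha$-term and the averaged $\beta$-term, handle continuity in $x$ via the moduli of continuity of $u$, $\alpha$, $\beta$, and handle continuity in $\nu$ by a rotation carrying $\nu_0^\perp$ onto $\nu^\perp$ followed by uniform continuity of $u$ under the average. The paper's rotation comes with the explicit bound $\abs{h-Ph}\le C\abs{h}\abs{\nu-\chi}$, so its argument is quantitative where yours is qualitative, but that is a cosmetic difference for this lemma.
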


\begin{proof}
For fixed $\abs{\nu}=\varepsilon$, we have for any $x,y \in \ol{\Omega}$ the estimate
\begin{align*}			
&\abs{\alpha(x)u(x+\nu)-\alpha(y)u(y+\nu)}\\
&\leq |\alpha(x)u(x+\nu)-\alpha(x)u(y+\nu)| +|\alpha(x)u(y+\nu)-\alpha(y)u(y+\nu)|\\
&\leq \alpha(x)\omega_u(\abs{x-y})+\norm{u}_\infty\omega_\alpha(\abs{x-y}),
\end{align*}
where $\omega_f$ is a modulus of continuity of the uniformly continuous function $f$. In a similar way, we have
\begin{multline*}
				\abs{\beta(x)\dashint_{B_\varepsilon^\nu} u(x+h)\dL(h)-\beta(y)\dashint_{B_\varepsilon^\nu} u(y+h)\dL(h)} \\
				\leq \beta(x)\omega_u(\abs{x-y})+\norm{u}_\infty\omega_\beta(\abs{x-y})
\end{multline*}
for $x,y \in \ol{\Omega}$. Thus, these inequalities imply that
\begin{equation}\label{estW}
				\abs{W(x,\nu)-W(y,\nu)} \leq \omega_u(\abs{x-y})+\norm u_\infty\Big[\omega_\alpha(\abs{x-y})+\omega_\beta(\abs{x-y})\Big]
\end{equation}
for all $x,y\in \ol{\Omega}$. Hence, $W(\cdot,\nu)$ is a continuous function for fixed $\nu$ with modulus of continuity $\omega_u+\norm u_\infty\big[\omega_\alpha+\omega_\beta\big]$.

For the continuity on $\nu$, fix a point $x\in\ol{\Omega}$. Then, the modulus of continuity of $\alpha$ does not play any role. In addition, since the function $u$ is continuous by the hypothesis, we only need to check the continuity of the function
\begin{equation*}
				\nu \mapsto \dashint_{B_\varepsilon^\nu} u(x+h)\dL(h).
\end{equation*}
Let $\abs\nu=\abs\chi=\varepsilon$ and define a rotation $P:\nu^\bot\rightarrow\chi^\bot$ satisfying
\begin{equation}\label{rotation}
				\abs{h-Ph}\leq C\abs{h}\abs{\nu-\chi}
\end{equation}
for all $h\in \nu^\bot$, where $C>0$ is a constant not depending on the choices of $\nu$ and $\chi$. Therefore, we have
\begin{multline}\label{eqrot}
				\dashint_{B_\varepsilon^\nu} u(x+h)\dL(h)-\dashint_{B_\varepsilon^\chi} u(x+h)\dL(h) \\
				= \dashint_{B_\varepsilon^\nu}\brackets{u(x+h)-u(x+Ph)}\dL(h).
\end{multline}
By recalling \eqref{rotation} together with the fact that we can choose $\omega_u$ to be increasing, we can estimate the expression in brackets in the equation \eqref{eqrot} from above by
\begin{equation*}
				\omega_u\pare{C\varepsilon\abs{\nu-\chi}}
\end{equation*}
for $h\in B_\varepsilon^\nu$. Then, this same bound also holds for \eqref{eqrot}, and the continuity of $W(x,\cdot)$ for fixed $x\in \ol{\Omega}$ follows.
\end{proof}

\begin{lemma}\label{LEMMA-TLIP}
For all $u\in C(\overline\Omega_\varepsilon)$, the operator $T_\varepsilon$ defined in \eqref{Top} satisfies $T_\varepsilon u\in C(\overline\Omega_\varepsilon)$. In addition, for all $u,v\in C(\overline\Omega_\varepsilon)$ such that $u\leq v$, we have
$$
T_\varepsilon u\leq T_\varepsilon v~ (\textit{monotonicity}).
$$
\end{lemma}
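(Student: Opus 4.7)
The plan is to dispatch monotonicity first, which follows essentially immediately from the positivity of $\alpha$ and $\beta$, and then deduce continuity by lifting the uniform-in-$\nu$ modulus of continuity for $W$ provided by Lemma \ref{LEMMA-WCONT} through $\sup$ and $\inf$, combined with a case analysis over the three regions in which $T_\varepsilon$ is defined piecewise.

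For monotonicity, I would observe that if $u\le v$ on $\overline\Omega_\varepsilon$, then for every $x\in\overline\Omega$ and every $|\nu|=\varepsilon$ the positivity $\alpha(x),\beta(x)>0$ yields $W(u;x,\nu)\le W(v;x,\nu)$ pointwise. Taking $\sup_\nu$ and $\inf_\nu$ preserves this inequality, and since the weight $(1-\delta(x))/2$ is non-negative and the additive term $\delta(x)F(x)$ does not depend on the chosen function, we obtain $T_\varepsilon u(x)\le T_\varepsilon v(x)$ on $\overline\Omega$. On $\overline\Omega_\varepsilon\setminus\overline\Omega$ both values equal $F(x)$ by the convention stated after \eqref{Top}, so the inequality is trivial there.

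For continuity, my main input will be the estimate \eqref{estW}, which provides a single modulus $\omega:=\omega_u+\|u\|_\infty(\omega_\alpha+\omega_\beta)$ controlling $W(u;\cdot,\nu)$ \emph{independently} of the direction $\nu$. A standard sup/inf argument then shows that $x\mapsto\sup_{|\nu|=\varepsilon}W(u;x,\nu)$ and $x\mapsto\inf_{|\nu|=\varepsilon}W(u;x,\nu)$ are continuous on $\overline\Omega$ with the same modulus $\omega$. Since $\delta$ is continuous on $\overline\Omega_\varepsilon$ (its three pieces agree on $\partial I_\varepsilon\cap\Omega$ where $\dist(\cdot,\partial\Omega)=\varepsilon$ and on $\partial\Omega$ where $\dist(\cdot,\partial\Omega)=0$) and $F$ is continuous on $\Gamma_{\varepsilon,\varepsilon}$, the formula \eqref{Top} defines a continuous function on $\overline\Omega$, while $T_\varepsilon u\equiv F$ is continuous on $\overline O_\varepsilon$.

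The last point is continuity across the interfaces. On $\partial I_\varepsilon\cap\Omega$, the convention $\delta(x)F(x)=0$ for $x\in\Omega\setminus I_\varepsilon$ makes the two pieces of the definition agree, since $\delta=0$ there; on $\partial\Omega$, the factor $1-\delta$ vanishes from the interior side, so the sup/inf term drops out and one is left with $F(x)$, matching the exterior definition. I do not foresee any serious obstacle; the only care needed is to verify that the modulus from \eqref{estW} is genuinely uniform in $\nu$ so that it transfers to $\sup$ and $\inf$, and this is immediate from the form given in the proof of Lemma \ref{LEMMA-WCONT}.
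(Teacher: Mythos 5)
Your proposal is correct and follows essentially the same route as the paper: monotonicity is read off from the positivity of $\alpha,\beta$ and $(1-\delta)/2$, and continuity is deduced by pushing the $\nu$-uniform modulus from \eqref{estW} through $\sup$ and $\inf$, then combining with continuity of $\delta$ and $F$ and a matching check at the interfaces $\partial I_\varepsilon\cap\Omega$ and $\partial\Omega$. The only cosmetic difference is that the paper records explicit moduli (using $\omega_\delta(t)=t/\varepsilon$) on $\Omega\setminus I_\varepsilon$ and $I_\varepsilon$ separately, whereas you appeal directly to continuity of the products; the content is the same.
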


\begin{proof}
The monotonicity of $T_\varepsilon$ follows easily from the definition \eqref{Top}. Let $u\in C(\overline\Omega_\varepsilon)$ be a function with a modulus of continuity $\omega_u$. By \eqref{Top} and the fact that $F$ is continuous on $\overline O_\varepsilon$, the function $T_\varepsilon u$ is continuous on the outside strip $\overline O_\varepsilon$. Thus, we have to check that $T_\varepsilon u$ is continuous on $\ol{\Omega}$.

First, let $x,y\in \Omega\setminus I_\varepsilon$ and recall the elementary inequalities
\begin{align*}
				\abs{\sup_{\abs\nu=\varepsilon}W(x,\nu)-\sup_{\abs\nu=\varepsilon}W(y,\nu)} & \leq  \sup_{\abs\nu=\varepsilon}\abs{W(x,\nu)-W(y,\nu)},\\
				\abs{\inf_{\abs\nu=\varepsilon}W(x,\nu)-\inf_{\abs\nu=\varepsilon}W(y,\nu)} & \leq  \sup_{\abs\nu=\varepsilon}\abs{W(x,\nu)-W(y,\nu)}.
\end{align*}
Then by the inequality \eqref{estW} for any $\abs\nu=\varepsilon$, we get that
\begin{align*}
&\frac{1}{2}\abs{\pare{\sup_{\abs\nu=\varepsilon}+\inf_{\abs\nu=\varepsilon}}W(x,\nu)-\pare{\sup_{\abs\nu=\varepsilon}+\inf_{\abs\nu=\varepsilon}}W(y,\nu)} \\
&\leq \omega_u(\abs{x-y})+\norm u_\infty\Big[\omega_\alpha(\abs{x-y})+\omega_\beta(\abs{x-y})\Big].
\end{align*}
Here, we use the shorthand notation
$$
\pare{\sup_{\abs\nu=\varepsilon}+\inf_{\abs\nu=\varepsilon}}W(x,\nu):=\sup_{\abs\nu=\varepsilon}W(x,\nu)+\inf_{\abs\nu=\varepsilon}W(x,\nu).
$$
Therefore, since $\delta=0$ on $\Omega\setminus I_\varepsilon$, we have shown that $T_\varepsilon u$ is continuous on $\Omega\setminus I_\varepsilon$.

Then, let $x,y \in I_\varepsilon$ and recall that $\sup_{x\in\Omega}\big( 1-\delta(x)\big)=1$ and $\omega_{\delta}(t)=t/\varepsilon$ for $t\geq 0$. Thus, we can estimate
\begin{align*}			
				&\abs{\frac{1-\delta(x)}{2}\pare{\sup_{\abs\nu=\varepsilon}+\inf_{\abs\nu=\varepsilon}}W(x,\nu)-\frac{1-\delta(y)}{2}\pare{\sup_{\abs\nu=\varepsilon}+\inf_{\abs\nu=\varepsilon}}W(y,\nu)}\\
				&\leq \omega_u(\abs{x-y})+\norm u_\infty\big[\omega_\alpha(\abs{x-y})+\omega_\beta(\abs{x-y})\big]+\frac{\norm{u}_\infty}{\varepsilon}\abs{x-y}
\end{align*}
and
\begin{equation*}			
				\abs{\delta(x)F(x)-\delta(y)F(y)} \leq \delta(x)\omega_F(\abs{x-y})+\frac{\norm{F}_\infty}{\varepsilon}\abs{x-y}.
\end{equation*}
Consequently, $T_\varepsilon u$ is continuous in $I_\varepsilon$. Since the limiting values of the function $T_\varepsilon u$ coincide with the function values on the boundary $\partial I_\varepsilon$, there must exist a modulus of continuity for $T_\varepsilon u$, and hence $T_\varepsilon u\in C(\overline\Omega_\varepsilon)$.
\end{proof}

For the next result, let $T_\varepsilon^k$ denote the $k$-th iteration of the operator $T_\varepsilon$ for $k\in\N$, i.e.,
\begin{equation*}
								T_\varepsilon^k=T_\varepsilon(T_\varepsilon^{k-1}), \mbox{ } T_\varepsilon^0=\text{Id},
\end{equation*}
with the identity operator $\text{Id}(u)=u$ for all $u\in C(\ol{\Omega}_\varepsilon)$. By \eqref{Top} and the monotonicity of $T_\varepsilon$, the sequence of iterates $\braces{T_\varepsilon^k(\inf F)}_k$ is increasing and $\braces{T_\varepsilon^k(\sup F)}_k$ is decreasing. Moreover,
\begin{equation}\label{seqite}
				\inf F\leq T_\varepsilon^k(\inf F)\leq T_\varepsilon^k(\sup F)\leq\sup F
\end{equation}
for all $k\in\N$. Consequently, we can define the pointwise limit of both sequences
\begin{equation}\label{smcdef}
				\left\{\begin{array}{l}
								\underline u(x):=\displaystyle\lim_{k\rightarrow\infty}T^k_\varepsilon(\inf F),\\ 
								\overline u(x):=\displaystyle\lim_{k\rightarrow\infty}T^k_\varepsilon(\sup F)
				\end{array}\right.
\end{equation}
for all $x\in\ol{\Omega}_\varepsilon$. In addition, since $\underline u$ and $\overline u$ are defined as the limit of monotone sequences of continuous functions, they are lower and upper semicontinuous functions, respectively.

\begin{proposition}\label{LEMMA-SMC}
The functions $\underline u$ and $\overline u$ defined in \eqref{smcdef} are solutions to \eqref{DPP} and satisfy
\begin{equation}\label{smc}
				\underline u\leq\overline u.
\end{equation}
\end{proposition}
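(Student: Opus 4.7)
The plan is as follows. The inequality $\underline u \leq \overline u$ is immediate: iterating the monotonicity of $T_\varepsilon$ (Lemma \ref{LEMMA-TLIP}) on $\inf F \leq \sup F$ gives \eqref{seqite} for every $k$, and passing to the pointwise limit yields \eqref{smc}. To show that $\underline u$ solves \eqref{DPP}, I would set $u_k := T_\varepsilon^k(\inf F)$, which is continuous by Lemma \ref{LEMMA-TLIP}, with $u_{k+1} = T_\varepsilon u_k \uparrow \underline u$. The right-hand side of \eqref{Top} makes sense for any bounded Borel function, so $T_\varepsilon \underline u$ is defined through the same formula. On the set $\{\delta = 1\}$ the DPP reads $\underline u(x) = F(x)$, which holds since $u_k(x) = F(x)$ for every $k \geq 1$; it thus remains to verify $\underline u = T_\varepsilon \underline u$ at $x \in \Omega$. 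Since $\delta(x) F(x)$ is independent of $k$, this amounts to showing
\[
\lim_{k\to\infty}\sup_{\abs\nu=\varepsilon} W(u_k;x,\nu) = \sup_{\abs\nu=\varepsilon} W(\underline u;x,\nu)
\]
and the analogous identity for the infimum. Monotone convergence in the integral defining $W$, together with $u_k(x+\nu) \uparrow \underline u(x+\nu)$, gives $W(u_k;x,\nu) \uparrow W(\underline u;x,\nu)$ for every fixed $\nu$.

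The supremum part is routine: the upper bound $\sup_{\abs\nu=\varepsilon} W(u_k;x,\nu) \leq \sup_{\abs\nu=\varepsilon} W(\underline u;x,\nu)$ is trivial, and for any $\nu_0$ the bound $\sup_{\abs\nu=\varepsilon} W(u_k;x,\nu) \geq W(u_k;x,\nu_0) \to W(\underline u;x,\nu_0)$ closes the reverse direction after taking $\sup$ over $\nu_0$.

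The hard part will be the infimum, because $\underline u$ is only lower semicontinuous and cannot be passed through the infimum directly. The plan is to combine compactness of $\partial B_\varepsilon(0)$ with the monotonicity of the sequence: by Lemma \ref{LEMMA-WCONT}, $W(u_k;x,\cdot)$ is continuous on the compact sphere $\partial B_\varepsilon(0)$, so its infimum is attained at some $\nu_k$, and by extracting a subsequence we may assume $\nu_k \to \nu^*$. For any fixed $m$ and $k \geq m$ the monotonicity $u_k \geq u_m$ yields
\[
\inf_{\abs\nu=\varepsilon} W(u_k;x,\nu) = W(u_k;x,\nu_k) \geq W(u_m;x,\nu_k),
\]
and by continuity of $u_m$ and Lemma \ref{LEMMA-WCONT}, $W(u_m;x,\nu_k) \to W(u_m;x,\nu^*)$ as $k \to \infty$. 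Hence $\liminf_k \inf_{\abs\nu=\varepsilon} W(u_k;x,\nu) \geq W(u_m;x,\nu^*)$. Sending $m \to \infty$ and invoking monotone convergence gives $W(u_m;x,\nu^*) \to W(\underline u;x,\nu^*) \geq \inf_{\abs\nu=\varepsilon} W(\underline u;x,\nu)$. Combined with the trivial bound $\inf_{\abs\nu=\varepsilon} W(u_k;x,\nu) \leq \inf_{\abs\nu=\varepsilon} W(\underline u;x,\nu)$, this produces the desired identity and hence $\underline u = T_\varepsilon \underline u$.

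For $\overline u$ I would run a symmetric argument: $u_k := T_\varepsilon^k(\sup F) \downarrow \overline u$, the infimum now being the routine step, and the same compactness-plus-monotonicity scheme applied to the supremum with a maximizer $\nu_k$ of $W(u_k;x,\cdot)$, producing the upper bound $\limsup_k \sup_{\abs\nu=\varepsilon} W(u_k;x,\nu) \leq W(u_m;x,\nu^*) \to W(\overline u;x,\nu^*) \leq \sup_{\abs\nu=\varepsilon} W(\overline u;x,\nu)$. The main conceptual obstacle throughout is precisely the loss of continuity of the limits $\underline u, \overline u$, which forces the interposition of a continuous intermediate $u_m$ before one can pass the convergent minimizers (or maximizers) through $W$.
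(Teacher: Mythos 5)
Your proof is correct, and it uses the same ingredients as the paper's (monotonicity of the iterates, continuity of each $W(u_k;x,\cdot)$ from Lemma \ref{LEMMA-WCONT}, compactness of $\partial B_\varepsilon(0)$), but realizes the compactness argument differently in the one genuinely nontrivial step, the infimum identity, where $\underline u$ is only lower semicontinuous. The paper fixes $\lambda:=\lim_k\inf_{\abs{\nu}=\varepsilon}W(u_k;x,\nu)$, observes that the compact sublevel sets $C_k(\lambda):=\{\abs{\nu}=\varepsilon:W(u_k;x,\nu)\leq\lambda\}$ are nested and nonempty (the minimizer of $W(u_j;x,\cdot)$ lies in $C_k(\lambda)$ for $j\geq k$), invokes Cantor's intersection theorem to produce a common point $\tilde\nu$, and closes with monotone convergence at $\tilde\nu$. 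You instead extract a subsequential limit $\nu^*$ of the minimizers $\nu_k$ (Bolzano--Weierstrass) and interpose a fixed continuous approximant $u_m$ to let $k\to\infty$ before sending $m\to\infty$. The two are equivalent in spirit; the paper's nested-compacts version is a hair slicker, yours a bit more hands-on. One small point you should make explicit: after passing to a subsequence of minimizers you only control a subsequential $\liminf$. This suffices because $k\mapsto\inf_{\abs{\nu}=\varepsilon}W(u_k;x,\nu)$ is monotone increasing (from $u_k\leq u_{k+1}$ one gets $W(u_k;x,\nu)\leq W(u_{k+1};x,\nu)$ for every $\nu$, so the infima are ordered), hence the full sequence converges and every subsequential liminf equals its limit.
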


\begin{proof}
The inequality \eqref{smc} follows easily from \eqref{seqite}. We only show that $\underline u$ is a solution to \eqref{DPP}, since a similar argument can be applied to $\overline u$. To establish the result, we use \Cref{LEMMA-WCONT,LEMMA-TLIP} and the fact that $\{T^k_\varepsilon(\inf F)\}$ is increasing to show that we can change the order of the limit and the infimum in the function $\underline u$.

Let $x\in \ol{\Omega}_\varepsilon$ and $u_k:=T_\varepsilon^k(\inf F)$ for $k\in\N$. Then,
\begin{equation*}
\begin{split}
				\underline u(x) = ~& \lim_{k\rightarrow\infty}u_{k+1}(x) 
				=  \lim_{k\rightarrow\infty}T_\varepsilon u_k(x) \\
				= ~& \frac{1-\delta(x)}{2}\Bigg[\lim_{k\rightarrow\infty}\sup_{\abs\nu=\varepsilon}W(u_k;x,\nu) +\lim_{k\rightarrow\infty}\inf_{\abs\nu=\varepsilon}W(u_k;x,\nu)\Bigg] 
				 + \delta(x)F(x),
\end{split}
\end{equation*}
where $W$ denotes the auxiliary function defined in \eqref{Wu}. Thus, we need to prove the equalities
\begin{equation*}
				\lim_{k\rightarrow\infty}\sup_{\abs\nu=\varepsilon}W(u_k;x,\nu) = \sup_{\abs\nu=\varepsilon}W(\underline{u};x,\nu)
\end{equation*}
and
\begin{equation*}
				\lim_{k\rightarrow\infty}\inf_{\abs\nu=\varepsilon}W(u_k;x,\nu) = \inf_{\abs\nu=\varepsilon}W(\underline{u};x,\nu).
\end{equation*}
The first equation follows from the fact that the sequence $\braces{u_k}$ is pointwise increasing. For the second equation, we can assume $x\in \Omega$. \Cref{LEMMA-WCONT,LEMMA-TLIP} imply that $W(u_k;x,\nu)$ is continuous with respect to $\nu$ for all $k\geq 1$. Therefore, we can define the compact set
\begin{equation*}
				C_k(\lambda):=\{\nu \in \mathbb R^n:|\nu|=\varepsilon \text{ and }W(u_k;x,\nu)\leq\lambda\}
\end{equation*}
for $\lambda\in\R$. Again, since $\braces{u_k}$ is pointwise increasing, $C_{k+1}(\lambda)\subset C_k(\lambda)$ for all $k\geq 1$. Now, let
\begin{equation*}
				\lambda=\lim_{k\rightarrow\infty}\inf_{\abs\nu=\varepsilon}W(u_k;x,\nu).
\end{equation*}
Because $W(u_k;x,\cdot)$ is continuous for all $k\geq 1$, there exists $\nu^*_k\in \partial B_\varepsilon(0)$ such that
$$
\inf_{\abs\nu=\varepsilon}W(u_k;x,\nu)=W(u_k;x,\nu^*_k).
$$
This, together with the fact that $\{u_k\}$ is increasing, yields $C_k(\lambda)\neq\emptyset$ for all $k\geq 1$. Thus by Cantor's intersection theorem, we get
\begin{equation*}
				\bigcap_{k=1}^\infty C_k(\lambda)\neq\emptyset.
\end{equation*}
Choose $\tilde{\nu}\in\cap_{k=1}^\infty C_k(\lambda)$ so that we can estimate
\begin{equation*}
				\lambda\leq \inf_{\abs\nu=\varepsilon}W(\underline{u};x,\nu)\leq W(\underline{u};x,\tilde{\nu}) = \lim_{k\rightarrow\infty} W(u_k;x,\tilde{\nu}) \leq \lambda.
\end{equation*}
The first inequality  follows from the choice of $\lambda$ and the fact that $\braces{u_k}$ is increasing. In addition, we use the monotone convergence theorem in the first equality and the choice of $\tilde{\nu}$ in the last inequality. Therefore, the proof is complete.
\end{proof}

\subsection{Uniqueness of solutions to \eqref{DPP}}\label{sect22}
In this subsection, we prove the uniqueness of solutions to \eqref{DPP}. To establish the result, we first show that any measurable solution of the equation \eqref{DPP} is between the solutions $\underline u$ and $\ol{u}$. Then, we show that, in fact, the functions $\underline u$ and $\ol{u}$ coincide. For the first result, we need the following technical lemma.

\begin{lemma}\label{LEMMA-MEASSOL}

Let $u$ be a measurable solution to \eqref{DPP}. Assume that $\sup F<\sup_{\Omega} u$, and let $x\in\Omega$ be such that
\begin{equation}\label{counterA}
				u(x)>\max\braces{\sup F, \sup_{\Omega} u-\lambda}
\end{equation}
for $\lambda>0$. Then, there exist $\abs{\nu_0}=\varepsilon$ and $h_0\in B_\varepsilon^{\nu_0}$ satisfying the inequalities
\begin{equation}\label{ineqA01}
				\abs{x+h_0}^2\geq \abs{x}^2+2^{\frac{2}{1-n}}\varepsilon^2
\end{equation}
and
\begin{equation}\label{ineqA02}
				u(x+h_0) \geq \sup_{ \Omega}  u - c(\alpha)\lambda
\end{equation}
with a constant $c(\alpha)>1$.

\begin{proof}
We obtain the inequalities \eqref{ineqA01} and \eqref{ineqA02} by analyzing the dynamic programming principle \eqref{DPP}. The proof is similar to the proof of Lemma \ref{gamestop}. Since $u$ satisfies \eqref{DPP}, we have
\begin{align*}
				u(x) \leq ~& (1-\delta(x))\displaystyle\sup_{\abs\nu=\varepsilon}\pare{\alpha(x)u(x+\nu)+\beta(x)\dashint_{B_\varepsilon^\nu} u(x+h)\dL(h)}  +\delta(x)\sup F \\
								\leq ~& (1-\delta(x))\alpha(x)\sup_{\abs\nu=\varepsilon}u(x+\nu) +(1-\delta(x))\beta(x)\sup_{\abs\nu=\varepsilon}\dashint_{B_\varepsilon^\nu} u(x+h)\dL(h) \\
				~& +\delta(x)\sup F.
\end{align*}
In addition, by utilizing the assumption \eqref{counterA} and $u=F$ on $\ol{O}_\varepsilon$, we get
$$
\sup_{\abs\nu=\varepsilon}u(x+\nu)\leq u(x)+\lambda.
$$
Thus by \eqref{counterA}, $0<\delta(x)<1$, $\alpha(x)+\beta(x)=1$ and $\beta(x)\geq\beta_{\text{min}}>0$ for all $x\in \Omega$, we have
\begin{equation*}
				u(x)\leq\sup_{\abs\nu=\varepsilon}\dashint_{B_\varepsilon^\nu} u(x+h)\dL(h)+\frac{\alpha_{\text{max}}}{\beta_{\text{min}}}\lambda.
\end{equation*}
By the definition of supremum, there must exist $\abs{\nu_0}=\varepsilon$ such that
\begin{equation}\label{eqn01}
				u(x)-2\frac{\alpha_{\text{max}}}{\beta_{\text{min}}}\lambda\leq\dashint_{B_\varepsilon^{\nu_0}} u(x+h)\dL(h).
\end{equation}

Next, we define a set $S\subset B_\varepsilon^{\nu_0}$ depending on $x$ and $\nu_0$. If $ x\not=0$ and $\nu_0 \in \text{span}\{x\}$ or $x=0$, we define
\begin{align*}
S:=\set{h\in B_\varepsilon^{\nu_0}}{\abs{h}\geq (3/4)^{\frac{1}{n-1}}\varepsilon}.
\end{align*}
Otherwise, we set
$$
S:=\set{h\in B_\varepsilon^{\nu_0}}{\abs{h}\geq 2^{\frac{1}{1-n}}\varepsilon \mbox{ and } \prodin{x}{h}\geq 0}.
$$
Observe that in both cases, the Lebesgue measure of the set $S$ is the same. Indeed, it is clear that
$$
\mathcal{L}^{n-1}(B_\varepsilon^{\nu_0})-\mathcal{L}^{n-1}(B_{(3/4)^{1/(n-1)}\varepsilon}^{\nu_0})=\frac{1}{4}\mathcal{L}^{n-1}(B_\varepsilon^{\nu_0}).
$$
By symmetry, we get
$$
\mathcal L^{n-1}\big(\{h\in B_\varepsilon^{\nu_0}: \langle x,h\rangle>0\}\big)=\mathcal L^{n-1}\big(\{h\in B_\varepsilon^{\nu_0}: \langle x,h\rangle<0\}\big),
$$
and in the case $x\not=0$ and $\nu_0 \not \in \text{span}\{x\}$, it holds $$\mathcal L^{n-1}\big(\{h\in B_\varepsilon^{\nu_0}: \langle x,h\rangle=0\}\big)=0.$$
Thus, we have
\begin{equation}\label{eq:leb of S}
	 		\mathcal{L}^{n-1}(S)=\frac{1}{4}\mathcal{L}^{n-1}(B_\varepsilon^{\nu_0}).
\end{equation}
In addition, because $(3/4)^{\frac{2}{n-1}}\geq  2^{\frac{2}{1-n}}$, the inequality \eqref{ineqA01} holds for each $h\in S$. The equality \eqref{eq:leb of S}, together with \eqref{counterA} and \eqref{eqn01}, implies
\begin{equation*}
\begin{split}
				\sup_{\Omega}u  \leq ~& u(x)+\lambda \\
				\leq ~& \dashint_{B_\varepsilon^{\nu_0}}u(x+h)\dL(h) + \lambda\frac{1+\alpha_{\text{max}}}{\beta_{\text{min}}} \\
				= ~& \frac{1}{4\mathcal{L}^{n-1}(S)}\braces{\int_{S}u(x\!+\!h)\dL(h)\!+\!\int_{B_\varepsilon^{\nu_0}\setminus S}u(x\!+\!h)\dL(h)}  \!+\!\lambda\frac{1\!+\!\alpha_{\text{max}}}{\beta_{\text{min}}} \\
				\leq ~& \frac{1}{4}\ \dashint_{S}u(x+h)\dL(h) + \frac{3}{4}\sup_{\Omega} u +\frac{2\lambda}{\beta_{\text{min}}}.
\end{split}
\end{equation*}
By rearranging the terms and multiplying by $4$, we have
\begin{equation*}
\begin{split}
				\sup_{\Omega}u - \frac{8\lambda}{\beta_{\text{min}}} \leq \dashint_{S}u(x+h)\dL(h).
\end{split}
\end{equation*}
Hence, there must exist $h_0\in S\subset B_\varepsilon^{\nu_0}$ satisfying \eqref{ineqA02}.
\end{proof}
\end{lemma}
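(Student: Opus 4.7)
The plan is to extract information from the dynamic programming principle by exploiting that $u(x)$ is very close to $\sup_\Omega u$. Since $u \leq \sup_\Omega u$ on $\Omega$ and $u = F \leq \sup F$ on $\overline{O}_\varepsilon$, the assumption $u(x) > \sup_\Omega u - \lambda$ forces $u(x+\nu) \leq u(x)+\lambda$ for every $|\nu|=\varepsilon$. Substituting this into \eqref{DPP}, bounding $\tfrac{1}{2}(\sup+\inf)\leq \sup$, and noting that $u(x) > \sup F$ rules out $\delta(x)=1$, I can rearrange terms and use the uniform bounds $\alpha(x)\leq \alpha_{\text{max}}<1$ and $\beta(x)\geq \beta_{\text{min}}>0$ to obtain
$$u(x) < \sup_{|\nu|=\varepsilon} \dashint_{B_\varepsilon^\nu} u(x+h)\,\dL(h) + \frac{\alpha_{\text{max}}}{\beta_{\text{min}}}\lambda.$$
By the definition of supremum there is then a direction $\nu_0$ with $|\nu_0|=\varepsilon$ that realizes this average up to an additive error of order $\lambda$.

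Next I would split the hyperplane ball $B_\varepsilon^{\nu_0}$ into a ``good'' set $S\subset B_\varepsilon^{\nu_0}$ and its complement, choosing $S$ so that every $h\in S$ satisfies both $|h|^2\geq 2^{2/(1-n)}\varepsilon^2$ and $\langle x,h\rangle\geq 0$. The first condition together with the second gives at once
$$|x+h|^2 = |x|^2+|h|^2+2\langle x,h\rangle \geq |x|^2 + 2^{2/(1-n)}\varepsilon^2,$$
which is \eqref{ineqA01}. A symmetry-plus-volume computation shows $\mathcal{L}^{n-1}(S) = \tfrac{1}{4}\mathcal{L}^{n-1}(B_\varepsilon^{\nu_0})$. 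Using the decomposition
$$\dashint_{B_\varepsilon^{\nu_0}} u(x+h)\,\dL(h) \leq \tfrac{1}{4}\dashint_S u(x+h)\,\dL(h) + \tfrac{3}{4}\sup_\Omega u,$$
combined with the previous bound on $u(x)$ and with the assumption $u(x) > \sup_\Omega u - \lambda$, one obtains
$$\dashint_S u(x+h)\,\dL(h) \geq \sup_\Omega u - c(\alpha)\lambda$$
for a constant $c(\alpha)>1$ depending only on $\alpha_{\text{max}}$ and $\beta_{\text{min}}$. The definition of average then provides some $h_0\in S$ with $u(x+h_0)\geq \sup_\Omega u - c(\alpha)\lambda$, which is \eqref{ineqA02}.

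The delicate point is the choice of $S$. It must simultaneously be large enough in measure (so that the displaced mass contributes a controlled fraction in the decomposition above) and also force every $h\in S$ to pull $x+h$ genuinely further from the origin. The degenerate configurations $x=0$ or $\nu_0\in \text{span}\{x\}$ render the half-space condition $\langle x,h\rangle\geq 0$ either vacuous or automatic, so in those cases $S$ must be defined purely by a larger radial lower bound, namely $|h|\geq (3/4)^{1/(n-1)}\varepsilon$; one then has to verify that this alternative definition still has measure $\tfrac{1}{4}\mathcal{L}^{n-1}(B_\varepsilon^{\nu_0})$ and that $(3/4)^{2/(n-1)}\geq 2^{2/(1-n)}$, so that \eqref{ineqA01} still holds with the same constant. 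Handling these two cases uniformly is the main bookkeeping obstacle; everything else is algebraic manipulation of the DPP.
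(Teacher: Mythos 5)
Your proposal is correct and follows essentially the same route as the paper: the same rearrangement of the DPP using $\sup_{|\nu|=\varepsilon}u(x+\nu)\leq u(x)+\lambda$, the bounds $\alpha\leq\alpha_{\text{max}}<1$ and $\beta\geq\beta_{\text{min}}>0$, the same two-case definition of $S$ (generic half-annulus versus the larger radial cutoff $(3/4)^{1/(n-1)}\varepsilon$ in the degenerate cases $x=0$ or $\nu_0\in\mathrm{span}\{x\}$), the measure computation $\mathcal{L}^{n-1}(S)=\tfrac14\mathcal{L}^{n-1}(B_\varepsilon^{\nu_0})$, the verification $(3/4)^{2/(n-1)}\geq 2^{2/(1-n)}$, and the final three-quarters decomposition of the average. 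Nothing is missing; the argument is the paper's argument.
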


\begin{proposition}\label{LEMMA-MEASSOL1}
Any measurable solution $u$ to \eqref{DPP} satisfies
\begin{equation*}
				\underline u\leq u\leq\overline u
\end{equation*}
with $\underline u$ and $\overline u$ the semicontinuous functions defined in \eqref{smcdef}.
\end{proposition}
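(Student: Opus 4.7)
The plan is to prove the upper bound $u \leq \ol{u}$; the lower bound $\underline u \leq u$ follows by a symmetric argument. The strategy has two stages. First, iterate Lemma \ref{LEMMA-MEASSOL} to obtain the pointwise bound $u \leq \sup F$ on $\ol{\Omega}_\varepsilon$. Then combine this with the monotonicity of $T_\varepsilon$ and the fixed-point property $T_\varepsilon u = u$ to deduce $u \leq T_\varepsilon^k(\sup F)$ for every $k \in \N$, and pass to the limit.

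For the pointwise bound, argue by contradiction. Suppose $\sup_\Omega u > \sup F$ and set $K := \sup_\Omega u - \sup F > 0$. Fix an integer $k_0$, depending only on $\varepsilon$ and $\Omega$, large enough that any sequence $(x_k)$ in $\Omega$ with $|x_{k+1}|^2 \geq |x_k|^2 + 2^{2/(1-n)}\varepsilon^2$ must exit $\Omega$ within $k_0$ steps; then choose $\lambda > 0$ so small that $c(\alpha)^{k_0}\lambda < K$ (with a little extra slack to accommodate the strict inequality in \eqref{counterA}). Select $x_0 \in \Omega$ with $u(x_0) > \sup_\Omega u - \lambda$. Applying Lemma \ref{LEMMA-MEASSOL} inductively produces $x_{k+1} := x_k + h_k$ with $|h_k| < \varepsilon$ satisfying
$$
|x_{k+1}|^2 \geq |x_k|^2 + 2^{2/(1-n)}\varepsilon^2 \quad \text{and} \quad u(x_{k+1}) \geq \sup_\Omega u - c(\alpha)^{k+1}\lambda.
$$
The choice of $\lambda$ ensures $u(x_k) > \sup F$ at each step, so \eqref{counterA} continues to hold and the lemma can be reapplied. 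If some $x_k$ left $\Omega$, then necessarily $x_k \in \ol{O}_\varepsilon$ (because $|h_{k-1}| < \varepsilon$), whence $u(x_k) = F(x_k) \leq \sup F$, contradicting $u(x_k) > \sup F$. Hence all $x_k$ with $k \leq k_0$ remain in $\Omega$, contradicting the choice of $k_0$.

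Given this pointwise bound, together with $u = F \leq \sup F$ on $\ol{O}_\varepsilon$, we obtain $u \leq \sup F$ on $\ol{\Omega}_\varepsilon$. The monotonicity of $T_\varepsilon$---immediate from \eqref{Top} for any two pointwise-ordered functions---yields $u = T_\varepsilon u \leq T_\varepsilon(\sup F)$; iterating, $u \leq T_\varepsilon^k(\sup F)$ for every $k \in \N$, and letting $k \to \infty$ gives $u \leq \ol{u}$. The reverse bound $\underline u \leq u$ follows by applying the same argument to $-u$, which solves \eqref{DPP} with boundary datum $-F$, yielding $u \geq \inf F$ and then $u \geq T_\varepsilon^k(\inf F) \to \underline u$. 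The main obstacle is the careful bookkeeping in the iteration: the factor $c(\alpha) > 1$ in Lemma \ref{LEMMA-MEASSOL} makes the slack $c(\alpha)^k\lambda$ grow exponentially, so $\lambda$ must be chosen small enough, in terms of the geometric escape time $k_0$, for the margin $u(x_k) - \sup F$ to remain positive until the iterates are geometrically forced out of $\Omega$.
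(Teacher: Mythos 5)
Your proposal is correct and follows essentially the same route as the paper's own proof: reduce to the pointwise bound $\inf F\le u\le\sup F$, establish it by iterating Lemma \ref{LEMMA-MEASSOL} with a $\lambda$ (the paper's $\eta$) chosen small enough relative to $c(\alpha)^{k_0}$ and a geometric escape time $k_0$ (the paper's $j_0$), and then conclude via the monotonicity of $T_\varepsilon$. The only cosmetic difference is deriving the lower bound by applying the argument to $-u$ with datum $-F$, whereas the paper simply calls it "analogous."
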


\begin{proof}
By the monotonicity of the operator $T_\varepsilon$ and the definitions of $\underline u$ and $\overline u$, it is enough to show that
\begin{equation}\label{MEASSOL-INEQ}
				\inf F\leq u\leq\sup F.
\end{equation}
Because $u$ is a solution to \eqref{DPP}, we have $u(x)=F(x)$ for $x\in \ol{O}_\varepsilon$. Hence, we need to show the estimate \eqref{MEASSOL-INEQ} for all $x \in \Omega$. We focus our attention on the second inequality, since the proof of the first inequality is analogous. We proceed by contradiction and assume that
\begin{equation*}
				\sup_{\Omega} u>\sup F.
\end{equation*}
By the assumption, for $\eta>0$ there exists a point $x_1\in\Omega$ such that
\begin{equation*}
				u(x_1)>\max\braces{\sup F,\sup_{\Omega}u -\eta}.
\end{equation*}

The idea of the proof consists of finding a sequence of points $\braces{x_j}$ satisfying $u(x_j)>\sup F$ for all $j$ and $|x_{j_0}|$ is big enough for some large $j_0\geq 1$. This is a contradiction, because $u=F$ on $\ol{O}_\varepsilon$. We obtain the sequence of points by using Lemma \ref{LEMMA-MEASSOL} iteratively.

Choose an integer $j_0:=j_0(\varepsilon,n,\Omega)\geq 1$ big enough such that
\begin{equation}\label{xseq}
j2^{\frac{2}{1-n}}\varepsilon^2>\diam(\Omega)
\end{equation}
for all $j\geq j_0$. Then, we fix the constant $\eta>0$ small enough such that
\begin{equation}\label{contant eta small}
				0<\eta<\frac{1}{c(\alpha)^{j_0}}\pare{\sup_{ \Omega} u-\sup F}
\end{equation}
with the constant $c(\alpha)>1$ from \Cref{LEMMA-MEASSOL}. We start from $x_1$ and choose $x_2$ such that $x_2=x_1+h_0$ with $h_0$ given by \Cref{LEMMA-MEASSOL}. Then, we have that $|x_2|^2\geq |x_1|^2+2^{\frac{2}{1-n}}\varepsilon^2$ and
$$
u(x_2)\geq \sup_{\Omega} u-c(\alpha)\eta>\sup F.
$$
If $x_2\in \ol{O}_\varepsilon$, we get a contradiction. Otherwise, we continue in the same way. We choose $x_3$ such that $x_3=x_2+h_1$ with $h_1$ given by \Cref{LEMMA-MEASSOL}. Then, we have that $|x_3|^2\geq |x_1|^2+2\cdot2^{\frac{2}{1-n}}\varepsilon^2$ and
$$
u(x_3)\geq \sup_{\Omega} u-c(\alpha)^2\eta>\sup F.
$$

After $j_0-1$ repetitions, assume that $x_{j_0}\in \Omega$. By the inequalities \eqref{xseq} and \eqref{contant eta small} it holds for the point $x_{j_0+1}$ that
\begin{equation*}
				\abs{x_{j_0+1}}^2\geq\abs{x_1}^2+j_02^{\frac{2}{1-n}}\varepsilon^2>\abs{x_1}^2+\diam(\Omega)
\end{equation*}
and
\begin{equation*}
				u(x_{j_0+1})\geq \sup_{\Omega} u-c(\alpha)^{j_0}\eta>\sup F.
\end{equation*}
Since $x_{j_0+1}\not \in \Omega$, the contradiction follows.
\end{proof}

The next theorem, together with \eqref{smc}, implies that the semicontinuous solutions to \eqref{DPP}, $\underline u$ and $\overline u$, coincide.
\begin{theorem}\label{LEMMA-UINEQ}
Let $\underline u$ and $\overline u$ be the semicontinuous functions defined in \eqref{smcdef}. In addition, let $u_\I$ and $u_{\II}$ be the value functions defined in \eqref{uIIIdef}. Then, we have that
\begin{equation*}
				\overline u \leq u_\I \leq u_{\II}\leq \underline u.
\end{equation*}
\end{theorem}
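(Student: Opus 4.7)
The plan is to prove the three inequalities separately. The middle inequality $u_\I \leq u_{\II}$ is immediate from the definition of sup-inf versus inf-sup: for every pair $(\mathcal S_\I, \mathcal S_{\II})$, the payoff $\E^{x_0}_{\mathcal S_\I,\mathcal S_{\II}}[F(x_\tau)]$ lies between both expressions. The outer inequalities require a game-theoretic argument built around the DPP, and I expect this to be where the work lies.

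For $\overline u \leq u_\I$, I would fix $\eta>0$ and construct an explicit near-optimal strategy $\mathcal S_\I^*$ for Player I: at step $j+1$, depending measurably on the history, select $\nu_{j+1}^\I$ with
\[
W(\overline u;x_j,\nu_{j+1}^\I)\geq \sup_{|\nu|=\varepsilon}W(\overline u;x_j,\nu)-\eta\, 2^{-j}.
\]
A measurable such selection exists because $\overline u$ is upper semicontinuous (as a decreasing limit of continuous $T_\varepsilon^k(\sup F)$), so $\nu\mapsto W(\overline u;x,\nu)$ is upper semicontinuous on the compact set $\partial B_\varepsilon(0)$ and attains its supremum, allowing a standard measurable selection. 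Summable errors $\eta\,2^{-j}$ are chosen so that the total correction is bounded by $\eta$ regardless of how long the game runs.

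The key calculation is to check a submartingale property. Define $N_j:=\overline u(x_j)$ while the game is active ($c_j=0$) and $N_j:=F(x_\tau)$ once the game has ended ($c_j=1$). For any Player II strategy $\mathcal S_{\II}$, conditioning on $\mathcal F_j$ with $c_j=0$,
\[
\E[N_{j+1}\mid\mathcal F_j]=\delta(x_j)F(x_j)+\tfrac{1-\delta(x_j)}{2}\bigl[W(\overline u;x_j,\nu_{j+1}^\I)+W(\overline u;x_j,\nu_{j+1}^{\II})\bigr].
\]
Using the near-optimality of $\mathcal S_\I^*$ and the DPP satisfied by $\overline u$ (Proposition \ref{LEMMA-SMC}), this is at least $\overline u(x_j)-\tfrac{1}{2}\eta\,2^{-j}$. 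Hence $M_j:=N_j+\sum_{k<j}\eta\,2^{-(k+1)}$ is a bounded submartingale. Since the game ends in finite time almost surely by \Cref{gamestop}, $N_j\to F(x_\tau)$ a.s.\ and in $L^\infty$, so optional stopping yields
\[
\overline u(x_0)=M_0\leq \E^{x_0}_{\mathcal S_\I^*,\mathcal S_{\II}}[F(x_\tau)]+\eta.
\]
Taking the infimum over $\mathcal S_{\II}$, then the supremum over $\mathcal S_\I$, and sending $\eta\to 0$ gives $\overline u(x_0)\leq u_\I(x_0)$.

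The remaining inequality $u_{\II}\leq \underline u$ is dual: Player II plays a strategy that is $\eta\,2^{-j}$-optimal in $\inf_{|\nu|=\varepsilon}W(\underline u;x_j,\nu)$, which turns the analogous process into a supermartingale and gives $\underline u(x_0)\geq \sup_{\mathcal S_\I}\E^{x_0}_{\mathcal S_\I,\mathcal S_{\II}^*}[F(x_\tau)]-\eta\geq u_{\II}(x_0)-\eta$. The main obstacle I anticipate is handling the mismatch between $F$ appearing in the DPP versus $F(x_\tau)$ being the actual payoff when the game stops from the interior strip $I_\varepsilon$; this is resolved by the substitution $N_j\mapsto F(x_\tau)$ at the stopping time, together with the identity $\overline u=F$ on $\overline O_\varepsilon$, which makes the boundary term $\delta(x_j)F(x_j)$ in the DPP exactly match the conditional contribution of stopping.
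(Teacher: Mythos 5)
Your proposal is correct and takes essentially the same approach as the paper: both construct a supermartingale/submartingale from the semicontinuous solution via a (near-)optimal strategy justified by the DPP, then conclude by optional stopping at the a.s.\ finite $\tau$. The only cosmetic differences are that the paper uses exact optimizers (measurable selection cited from Srivastava) and writes out $u_\II\leq\underline u$ in full, whereas you use $\eta\,2^{-j}$-approximate selections and write out $\overline u\leq u_\I$.
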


\begin{proof}
From the properties of $\inf$ and $\sup$, it is clear that
\begin{equation*}
				u_\I\leq u_{\II}.
\end{equation*}
Thus, we need to prove that
\begin{equation*}
u_{\II}\leq\underline u \ \ \mbox{ and } \ \ \overline u\leq u_\I.
\end{equation*}

We only show $u_{\II}\leq\underline u$, since the argument in the other case is similar. To establish the result, we find a suitable strategy for $P_{\II}$ and a function $(c,x)\mapsto \Phi(c,x)$ depending on $\underline u$ and $F$ such that the process $\Phi(c_k,x_k)$ becomes a supermartingale irregardless what the opponent does. Then, we will be able to compare the functions $\underline u$ and $u_{\II}$ by the optimal stopping theorem.

Let $x_0 \in \Omega$ and denote a strategy $\mathcal{S}^*_{\II}$ for $P_{\II}$ such that
\begin{equation*}
				W(\underline{u};x_j,\nu_j^{\II})=\inf_{\abs\nu=\varepsilon}W(\underline{u};x_j,\nu)
\end{equation*}
for all $j\geq 0$, where $W$ denotes the auxiliary function defined in \eqref{Wu}. By a measure theoretical analysis, we can prove that this strategy is Borel measurable. For more details, see for example \cite[Theorem 5.3.1]{srivastava98}.

Fix any strategy $\mathcal{S}_\I$ for $P_\I$, and let us define a function $\Phi:\widetilde{C}\times \mathbb R^n\to\mathbb R$ such that
\begin{align*}
\Phi(c,x)=\begin{cases}
\underline{u}(x),&~~\text{if }c=0, \\
F(x),&~~\text{if }c=1.
\end{cases}
\end{align*}
Then, we can estimate
\begin{align*}
\begin{split}
&\E_{\mathcal{S}_\I,\mathcal{S}^*_{\II}}^{x_0}\Big[\Phi (c_{j+1},x_{j+1})\big|\big((c_0,x_0),\ldots,(c_j,x_j)\big)\Big] \\
= ~& \bigg(\frac{1-\delta(x_j)}{2}\brackets{W(\underline{u};x_j,\nu_{j+1}^\I)+W(\underline{u};x_j,\nu_{j+1}^{\II})} +\delta(x_j)F(x_j)\bigg)\mathbb I_{c_j}\big(\{0\}\big) \\	
&\hspace{1em}+F(x_j)\mathbb I_{c_j}\big(\{1\}\big)\\		
\leq ~& \bigg(\frac{1-\delta(x_j)}{2}\brackets{\sup_{\abs\nu=\varepsilon}W(\underline{u};x_j,\nu)+\inf_{\abs\nu=\varepsilon}W(\underline{u};x_j,\nu)} +\delta(x_j)F(x_j)\bigg)\mathbb I_{c_j}\big(\{0\}\big) \\
&\hspace{1em}+F(x_j)\mathbb I_{c_j}\big(\{1\}\big).
\end{split}
\end{align*}
Since $\underline u$ is a solution to \eqref{DPP}, we have by this estimate
\begin{equation*}
				\E_{\mathcal{S}_\I,\mathcal{S}^*_{\II}}^{x_0}\Big[\Phi(c_{j+1},x_{j+1})\big|\big((c_0,x_0),\ldots,(c_j,x_j)\big)\Big] \leq \Phi(c_j,x_j)
\end{equation*}
for all $j\geq 0$. Thus, the stochastic process $\big(\Phi(c_k,x_k)\big)_{k=0}^\infty$ is a supermartingale, when $P_{\II}$ uses the strategy $\mathcal{S}^*_{\II}$. By recalling \eqref{rv tau}, and since $F$ is bounded, we get by using the optional stopping theorem that
\begin{align*}
				u_{\II}(x_0)&=  \inf_{\mathcal{S}_{\II}}\sup_{\mathcal{S}_\I}\E_{\mathcal{S}_\I,\mathcal{S}_{\II}}^{x_0}\big[F(x_\tau)\big]  \\
				&\leq   \sup_{\mathcal{S}_\I}\E_{\mathcal{S}_\I,\mathcal{S}^*_{\II}}^{x_0}\big[F(x_\tau)\big] \\
&= \sup_{\mathcal{S}_\I}\E_{\mathcal{S}_\I,\mathcal{S}^*_{\II}}^{x_0}\big[\Phi(c_{\tau+1},x_{\tau+1})\big] \\
				&\leq  \underline u(x_0),
\end{align*}
because it holds $c_0=0$. Therefore, the proof is complete.
\end{proof}

Now, \Cref{LEMMA-UINEQ} and \Cref{LEMMA-MEASSOL1} imply the uniqueness of solutions to \eqref{DPP}. In addition, this unique function is continuous and the value function of the game.
\begin{theorem}\label{value unique}
Let $\varepsilon>0$ and let $F:\Gamma_{\varepsilon,\varepsilon}\to\mathbb R^n$ be a continuous function. Then, there exists a continuous function $u_\varepsilon: \ol{\Omega}_\varepsilon \to \mathbb R^n$ with the boundary data $F$ such that it satisfies the dynamic programming principle \eqref{DPP}. Moreover, this function is unique and it is the value function of the game, i.e., $u_\varepsilon=u_\I=u_{\II}$ with $u_\I$ and $u_{\II}$ defined in \eqref{uIIIdef}.
\end{theorem}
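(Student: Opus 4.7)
The plan is to assemble Theorem \ref{value unique} by combining the results already established in Sections \ref{sect21} and \ref{sect22}. Concretely, Proposition \ref{LEMMA-SMC} produces two solutions $\underline u$ and $\overline u$ of \eqref{DPP} with $\underline u\leq\overline u$; Proposition \ref{LEMMA-MEASSOL1} sandwiches every measurable solution between these two; and Theorem \ref{LEMMA-UINEQ} inserts the game values in the chain in the reverse order. So the strategy is to chain these inequalities to force the two semicontinuous envelopes, the value functions, and any measurable DPP solution to coincide.

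First I would record the chain of inequalities. By Proposition \ref{LEMMA-MEASSOL1}, $\underline u\leq u_\I$ and $u_\II\leq\overline u$ (applied to the measurable solutions $u_\I$ and $u_\II$ if one wishes, though we also have these directly for $\underline u$ and $\overline u$ from \eqref{smc}). Combining with Theorem \ref{LEMMA-UINEQ}, which provides $\overline u\leq u_\I\leq u_\II\leq\underline u$, and with \eqref{smc} which gives $\underline u\leq\overline u$, every link in
\[
\underline u\leq\overline u\leq u_\I\leq u_\II\leq\underline u
\]
becomes an equality. Define $u_\varepsilon:=\underline u=\overline u=u_\I=u_\II$. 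This immediately identifies the value functions with the fixed point of $T_\varepsilon$ and shows $u_\I=u_\II$ (so the game has a well-defined value).

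Next I would deduce continuity from this identification. By construction in \eqref{smcdef}, $\underline u$ is a pointwise limit of an increasing sequence of continuous functions, hence lower semicontinuous, while $\overline u$ is an upper semicontinuous limit of a decreasing sequence. Since they are equal as functions on $\overline\Omega_\varepsilon$, the common value $u_\varepsilon$ is simultaneously lower and upper semicontinuous, hence continuous. The boundary data are taken because the operator $T_\varepsilon$ prescribes $T_\varepsilon u(x)=F(x)$ on $\overline O_\varepsilon$, and the iterates satisfy this identity, so the limits $\underline u=\overline u=u_\varepsilon$ agree with $F$ on $\overline O_\varepsilon$.

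Finally I would address uniqueness among measurable solutions. If $v$ is any measurable function satisfying \eqref{DPP} with boundary data $F$, Proposition \ref{LEMMA-MEASSOL1} forces $\underline u\leq v\leq\overline u$, and the equality $\underline u=\overline u=u_\varepsilon$ derived above gives $v=u_\varepsilon$. There is no new obstacle here; the proof really is a bookkeeping step. The only delicate point to state clearly is that the hypothesis in Proposition \ref{LEMMA-MEASSOL1} is measurability rather than continuity, so the conclusion covers all measurable solutions, and in particular the continuous fixed point we just produced is the unique measurable fixed point. This completes the identification $u_\varepsilon=u_\I=u_\II$ and the continuity statement of the theorem.
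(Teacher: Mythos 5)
Your proposal is correct and follows essentially the same route the paper intends: the paper itself compresses the proof of Theorem \ref{value unique} into one sentence preceding it, appealing exactly to the chain obtained from \eqref{smc}, Proposition \ref{LEMMA-MEASSOL1} and Theorem \ref{LEMMA-UINEQ}, and you have unpacked that bookkeeping faithfully, including the observation that $\underline u=\overline u$ forces continuity (lower plus upper semicontinuity) and that boundary values are inherited from the definition of $T_\varepsilon$ on $\overline O_\varepsilon$. You were also right to flag that the application of Proposition \ref{LEMMA-MEASSOL1} directly to $u_\I$, $u_\II$ is optional — the closed chain $\underline u\leq\overline u\leq u_\I\leq u_\II\leq\underline u$ already follows from \eqref{smc} and Theorem \ref{LEMMA-UINEQ} alone, without needing to know beforehand that the game values satisfy the DPP.
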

\section{Local regularity}\label{sec:local regularity}

In this section, we give a local regularity estimate for functions satisfying \eqref{DPP} in $\Omega\setminus I_\varepsilon$. The dynamic programming principle in $\Omega\setminus I_\varepsilon$ reduces to the equation
\begin{equation}\label{local DPP}
\begin{split}
				u(x) = ~& \displaystyle\frac{1}{2}\Bigg[\displaystyle \sup_{\abs\nu=\varepsilon}\pare{\alpha(x)u(x+\nu)+\beta(x)\dashint_{B_\varepsilon^\nu} u(x+h)\dL(h)} \\
				& + \displaystyle \inf_{\abs\nu=\varepsilon}\pare{\alpha(x)u(x+\nu)+\beta(x)\dashint_{B_\varepsilon^\nu} u(x+h)\dL(h)}\Bigg]. 		\end{split}
\end{equation}

The regularity result is based on a method established by Luiro and Parviainen in \cite{luirop}. The method consists of several steps. First, we choose a comparison function $f$ having the desired regularity properties. Then, the idea is to analyze two different cases separately. At a small scale, we need to control the effects arising from the discretization. At a bigger scale, the key term of the comparison function is $C|x-z|^\gamma$ with $x,z\in \mathbb R^n, 0<\gamma<1$ and $C>0$ big enough.

In the second step, we aim to prove that the error $u(x)-u(z)-f(x,z)$, where $u$ is the solution to \eqref{local DPP}, is smaller in $(B_1 \times B_1 )\setminus T$ than in $(B_2 \times B_2) \setminus (B_1 \times B_1 \setminus T)$ with both sets belonging to $\mathbb R^{2n}$. The set $T$ is the set of points $(x,z) \in \mathbb R^{2n}$ such that $x=z$. Then, we thrive for a contradiction by assuming that the error is bigger in $(B_1 \times B_1 )\setminus T$.

As a final step, we get a contradiction by using a multidimensional dynamic programming principle for the comparison function $f$. In the proof below, intuition based on suitable strategies is helpful even though we do not write down stochastic arguments.

\begin{theorem}\label{LOCREG}
Let $(x,z)\in B_R\times B_R$, $B_{2R}\subset\Omega$ and
\begin{equation}\label{gamma exponent}
0<\gamma<\frac{\alpha_{\text{min}}}{\alpha_{\text{max}}}-\kappa
\end{equation}
for arbitrary small $\kappa\in\big(0,\alpha_{\text{min}}/\alpha_{\text{max}}\big)$ with $\alpha_{\text{min}},\alpha_{\text{max}}$ defined in \eqref{alpha assump}. Then, if $u$ satisfies \eqref{local DPP}, we have
\begin{equation}\label{Hoelder}
				\abs{u(x)-u(z)}\leq C\frac{\abs{x-z}^\gamma}{R^\gamma}+C\frac{\varepsilon^\gamma}{R^\gamma}
\end{equation}
with $C:=C(p_{\textrm{min}},p_{\textrm{max}},n,R,\sup_{B_{2R}} u,\gamma)$, $0<\varepsilon<1$ and $p_{\textrm{min}},p_{\textrm{max}}$ defined in \eqref{function p assump}.
\end{theorem}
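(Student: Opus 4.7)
The plan is to adapt the purely analytic, coupling-based comparison method of Luiro--Parviainen from \cite{luirop}, with the stochastic intuition that one is running two synchronised copies of the tug-of-war starting at $x$ and $z$. I first fix a comparison function of the form
\[
	f(x,z) := C_1 \Big(\frac{|x-z|}{R}\Big)^{\gamma} + C_2\Big(\frac{\varepsilon}{R}\Big)^{\gamma},
\]
augmented by a small localising term (for instance $\eta(|x-x_0|^2+|z-z_0|^2)$ after picking a candidate maximiser, or a penalty that diverges near $\partial B_{2R}$), whose only role is to make the supremum attained at an interior point. Arguing by contradiction, I assume that $\Psi(x,z):=u(x)-u(z)-f(x,z)$ is strictly positive somewhere in $B_R\times B_R$; choosing $C_1$ large relative to $\sup_{B_{2R}}u$ forces any maximiser $(x_0,z_0)$ of (an appropriately modified) $\Psi$ to lie in $B_{2R}\times B_{2R}\setminus\{x=z\}$, and in particular $u(x_0)>u(z_0)$ and $x_0\neq z_0$.

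Next, apply \eqref{local DPP} at both $x_0$ and $z_0$ and decompose
\[
	u(x_0)-u(z_0)=\tfrac{1}{2}\Big[\big(\sup-\sup\big)+\big(\inf-\inf\big)\Big]W(u;\cdot,\cdot).
\]
Each of the four brackets is estimated by coupling the direction $\nu$ at $x_0$ with a direction at $z_0$. Two couplings play a role: the \emph{parallel coupling} $\nu_z=\nu_x$ (with the same radial offset $h$ in the $(n-1)$-ball average) exploits translation invariance of $|x-z|^{\gamma}$, so that $f(x_0+\nu,z_0+\nu)=f(x_0,z_0)$, and leaves only corrections of the order $|\alpha(x_0)-\alpha(z_0)|$ and $|\beta(x_0)-\beta(z_0)|$; the \emph{reflection coupling}, in which $\nu_z$ is the reflection of $\nu_x$ across the hyperplane $(x_0-z_0)^{\bot}$, changes $|x-z|$ by a macroscopic amount, and the strict concavity of $r\mapsto r^{\gamma}$ generates a strict gain. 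Used appropriately in the $\sup-\sup$ and $\inf-\inf$ slots, these couplings convert the DPP identity into an inequality of the form
\[
	u(x_0)-u(z_0) \;\le\; u(x_0)-u(z_0) - \mathcal{G}(x_0,z_0) + \mathcal{E}(x_0,z_0),
\]
where $\mathcal{G}$ is the strictly positive concavity gain and $\mathcal{E}$ collects the coefficient-variation errors.

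The quantitative part of the argument is a case split on $|x_0-z_0|$. When $|x_0-z_0|\lesssim \varepsilon$ the discretisation term $C_2(\varepsilon/R)^{\gamma}$ carries the whole estimate, and a direct Taylor-type control of $f$ on the averaging discs, combined with parallel coupling, gives the contradiction. When $|x_0-z_0|\gg \varepsilon$, using concavity of $r^{\gamma}$ along the deterministic (mass-$\alpha$) jumps yields a gain schematically of size
\[
	\mathcal{G}(x_0,z_0) \;\gtrsim\; \alpha_{\text{min}}\,C_1\,\frac{\varepsilon^{2}}{R^{\gamma}\,|x_0-z_0|^{2-\gamma}},
\]
whereas the rotation of the $(n-1)$-plane in the $\beta$-averages contributes an error of size $\alpha_{\text{max}} C_1 \varepsilon^{2}|x_0-z_0|^{\gamma-2}/R^{\gamma}$ (this is the mechanism by which $\gamma$ must respect $\gamma<\alpha_{\text{min}}/\alpha_{\text{max}}$), plus a lower-order error in $\varepsilon$ and in the oscillation of $\alpha,\beta$. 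Choosing $C_1$ sufficiently large (in terms of $p_{\text{min}},p_{\text{max}},n,R,\sup_{B_{2R}}u,\gamma$) then makes $\mathcal{G}-\mathcal{E}>0$, yielding the required contradiction.

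The main obstacle, which is exactly the reason the translation-invariant proofs of \cite{peress08,manfredipr12,luirops13} do not apply, is the bookkeeping of the coefficient-variation terms produced by $\alpha(x_0)\neq\alpha(z_0)$ and $\beta(x_0)\neq\beta(z_0)$. Under the parallel coupling these terms appear multiplied by $u(x_0+\nu)-u(x_0)$ and by the analogous averages, which, thanks to the inductive max property of $\Psi$, can be replaced by expressions in $f$ and estimated by $|x_0-z_0|$ times the modulus of continuity of $\alpha$. Ensuring that these errors are absorbed by $\mathcal{G}$ is what forces the extra slack $\kappa$ in \eqref{gamma exponent}: the closer $\gamma$ is to $\alpha_{\text{min}}/\alpha_{\text{max}}$, the larger $C_1$ (and hence $C$ in \eqref{Hoelder}) must be. Once this balance is made quantitative the contradiction closes and \eqref{Hoelder} follows.
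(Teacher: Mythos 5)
Your overall strategy is the right one and matches the paper's: a doubling-variables comparison function of the form $C|x-z|^{\gamma}$ plus correction terms, a contradiction at a near-maximiser, an application of \eqref{local DPP} at $x_0$ and $z_0$, and a coupling of the directions $\nu_x,\nu_z$ so that the rotated noise terms and a symmetric choice of the deterministic steps exploit the concavity of $r\mapsto r^{\gamma}$ along $x-z$. The heuristic balance you write down in the macroscopic regime $|x_0-z_0|\gg\varepsilon$, pitting a gain $\propto\alpha_{\text{min}}(1-\gamma)$ against a loss $\propto\alpha_{\text{max}}\gamma$, is essentially what produces the threshold $\gamma<\alpha_{\text{min}}/\alpha_{\text{max}}$, and this is in line with the paper's computation of the coefficient $\textbf{[V]}$.

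There is a genuine gap in your treatment of the small-scale regime $|x_0-z_0|\lesssim\varepsilon$. You claim that the additive constant $C_2(\varepsilon/R)^{\gamma}$ ``carries the estimate'' and that ``a direct Taylor-type control of $f$ on the averaging discs, combined with parallel coupling, gives the contradiction''. Neither step works as stated: an additive constant in $f$ cancels in the difference $f(x_0+h_x,z_0+h_z)-f(x_0,z_0)$, so it cannot help close the contradiction; and the second-order Taylor remainder of $|x-z|^{\gamma}$ on discs of radius $\varepsilon$ is of order $\varepsilon^3(|x-z|-2\varepsilon)^{\gamma-3}$, which is uncontrolled when $|x-z|\lesssim\varepsilon$ (the function is not $C^2$ uniformly down to the diagonal, and the step size is comparable to $|x-z|$). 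The paper avoids this by not using Taylor at all in the small-scale regime: it modifies the comparison function by subtracting an \emph{annular step function} $f_2$, piecewise constant on rings $A_i=\{(i-1)\varepsilon/10<|x-z|\le i\varepsilon/10\}$ with geometrically decaying values $C^{2(N-i)}\varepsilon^{\gamma}$. Falling off a step of $f_2$ produces a quantified strictly negative contribution to $\inf G$, which is what replaces the missing concavity gain of $|x-z|^{\gamma}$ near the diagonal. Without this device (or some genuine substitute for it) your argument does not close when $|x_0-z_0|\lesssim\varepsilon$. A second, more minor, issue: the paper's localiser is the term $|x+z|^2$ added to $f_1$ after the normalisation $z\mapsto -z$, which makes $f_1\ge 1$ on $(B_2\times B_2)\setminus(B_1\times B_1)$ and pins the maximiser into $B_1\times B_1$; the penalisations you suggest ($\eta(|x-x_0|^2+|z-z_0|^2)$ or a blowup near $\partial B_{2R}$) may accomplish the same, but you would need to verify that they do not spoil the second-order estimates driving $\textbf{[II]}$--$\textbf{[IV]}$.
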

\begin{proof}
By using a scaling $x \mapsto Rx$, we can assume that $R=1$. In addition by translation, it is enough to consider the claim \eqref{Hoelder} in the case $z=-x$. For simplicity, we assume $\sup_{B_{2}\times B_{2}}(u(x)-u(z))\leq 1$.

Given $C>1$, let $N\in\N$ be such that
\begin{equation*}
				N\geq\frac{10^2C}{\gamma}.
\end{equation*}
Then, we define the following functions in $\R^{2n}$
\begin{align*}
f_1(x,z)& = C\abs{x-z}^\gamma+\abs{x+z}^2, \\
f_2(x,z)& = \left\{\begin{array}{ll}
								C^{2(N-i)}\varepsilon^\gamma & \mbox{ if } (x,z)\in A_i, \\
								0 & \mbox{ if } \abs{x-z}>N\displaystyle\frac{\varepsilon}{10},
				\end{array}\right.\\
f(x,z) &= f_1(x,z)-f_2(x,z)
\end{align*}
with $\displaystyle A_i=\set{(x,z)\in\R^{2n}}{(i-1)\frac{\varepsilon}{10}<\abs{x-z}\leq i\frac{\varepsilon}{10}}$ for $i=0,1,\ldots,N$. The function $f_2$ is called an \textit{annular step function}, and it is needed to control the small scale jumps. Note that we have $\sup f_2=C^{2N}\varepsilon^\gamma$ reached on
\begin{equation*}
				T:=A_0=\set{(x,z)\in\R^{2n}}{x=z}.
\end{equation*}

It holds that $f_1\geq 1$ in $(B_2\times B_2)\setminus(B_1\times B_1)$. Here, we need the term $|x+z|^2$ in the function $f_1$, because
$$
|x+z|^2=2|x|^2+2|z|^2-|x-z|^2\geq 3
$$
for all $x,z\in (B_2\times B_2)\setminus(B_1\times B_1)$ such that $|x-z|\leq 1$. Therefore, together with $u(x)-u(z)\leq 1$ in $B_2\times B_2$ and $u(x)-u(z)=0$ in $T$ we have
\begin{equation}\label{MEq}
				u(x)-u(z)-f(x,z)\leq \sup f_2=C^{2N}\varepsilon^\gamma,
\end{equation}
if $(x,z)\in T$ or $(x,z) \in (B_2\times B_2)\setminus(B_1\times B_1)$. We have to show that this inequality is also true in $(B_1\times B_1)\setminus T$. Thriving for a contradiction, write
\begin{equation*}
				M:=\sup_{(x,z)\in B_1\times B_1\setminus T}(u(x)-u(z)-f(x,z))
\end{equation*}
and suppose that
\begin{equation*}
				M>C^{2N}\varepsilon^\gamma.
\end{equation*}
By \eqref{MEq}, this is equivalent to
\begin{equation}\label{RaA}
				M=\sup_{(x,z)\in B_2\times B_2}(u(x)-u(z)-f(x,z)).
\end{equation}
For all $\eta>0$, we choose a pair of points $(x,z)\in( B_1\times B_1)\setminus T$ such that
\begin{equation}\label{INEQ01}
				M\leq u(x)-u(z)-f(x,z)+\frac{\eta}{2}.
\end{equation}
Then by \eqref{local DPP}, we have
\begin{equation}\label{ineqdpp}
\begin{split}
			u(x)-u(z) \leq ~& \frac{1}{2}\sup_{\nu_x,\nu_z}(W(x,\nu_x)-W(z,\nu_z)) +\frac{1}{2}\inf_{\nu_x,\nu_z}(W(x,\nu_x)-W(z,\nu_z)),
\end{split}
\end{equation}
where $W$ is the auxiliary function defined in \eqref{Wu}.

Given $\abs{\nu_x}=\abs{\nu_z}=\varepsilon$, let $P_{\nu_z,-\nu_x}$ denote any rotation that sends $\nu_z$ to $-\nu_x$. By recalling $\alpha(x)+\beta(x)=1$ for $x \in \Omega$, we can decompose the difference $W(x,\nu_x)-W(z,\nu_z)$. For simplicity, we may assume that $\alpha(x)\geq\alpha(z)$. Thus, we get
\begin{equation}\label{eq001}
\begin{split}
		  W(x,\nu_x)&-W(z,\nu_z)
= \alpha(z)\brackets{u(x+\nu_x) -  u(z+\nu_z)} \\
			~&\hspace{12pt} +  \beta(x)\displaystyle\dashint_{B_\varepsilon^{\nu_z}}\brackets{u(x+P_{\nu_z,-\nu_x}h) -  u(z+h)}\dL(h) \\
			~& \hspace{12pt} +  (\alpha(x)-\alpha(z))\brackets{u(x+\nu_x)-\displaystyle\dashint_{B_\varepsilon^{\nu_z}} u(z+h)\dL(h)}.
\end{split}
\end{equation}
Next, we use the counter assumption \eqref{RaA} to estimate each of the terms in \eqref{eq001} from above. Consequently, we can estimate
$$
u(y)-u(\tilde{y})\leq M+f(y,\tilde{y})
$$
for all $y,\tilde{y}\in B_2$. Then, we define
\begin{equation}\label{Qdef}
\begin{split}
			G(f,x,z,\nu_x,\nu_z) := ~& \alpha(z)f(x+\nu_x,z+\nu_z) \\
			~& +  \beta(x)\displaystyle\dashint_{B_\varepsilon^{\nu_z}} f(x+P_{\nu_z,-\nu_x}h,z+h)\dL(h) \\
			~&+  (\alpha(x)-\alpha(z))\displaystyle\dashint_{B_\varepsilon^{\nu_z}} f(x+\nu_x,z+h)\dL(h).
\end{split}
\end{equation}
Thus, we have
\begin{equation}\label{wmq}
				W(x,\nu_x)-W(z,\nu_z) \leq M + G(f,x,z,\nu_x,\nu_z).
\end{equation}
By taking the supremum, we obtain
\begin{equation}\label{ineqwmq}
			\sup_{\nu_x,\nu_z}(W(x,\nu_x)-W(z,\nu_z)) \leq M +\sup_{\nu_x,\nu_z}G(f,x,z,\nu_x,\nu_z).
\end{equation}
On the other hand, choose $\abs{\varrho_x}=\abs{\varrho_z}=\varepsilon$ such that
\begin{equation*}
			\inf_{\nu_x,\nu_z}G(f,x,z,\nu_x,\nu_z) \geq G(f,x,z,\varrho_x,\varrho_z)-\eta.
\end{equation*}
This together with \eqref{wmq} yields
\begin{align*}
			\inf_{\nu_x,\nu_z}(W(x,\nu_x)-W(z,\nu_z))  \leq & W(x,\varrho_x)-W(z,\varrho_z) \\
			 \leq & M + G(f,x,z,\varrho_x,\varrho_z) \\
			 \leq & M+\inf_{\nu_x,\nu_z}G(f,x,z,\nu_x,\nu_z)+\eta.
\end{align*}
Therefore, by applying this inequality and \eqref{ineqwmq} to \eqref{ineqdpp} we get
\begin{equation*}
			u(x)-u(z) \leq M+\frac{1}{2}\brackets{\sup_{\nu_x,\nu_z}G(f,x,z,\nu_x,\nu_z)+\inf_{\nu_x,\nu_z}G(f,x,z,\nu_x,\nu_z)}+\frac{\eta}{2}.
\end{equation*}
Combining this with \eqref{INEQ01}, we need to show
$$
\sup_{\nu_x,\nu_z}G(f,x,z,\nu_x,\nu_z)+\inf_{\nu_x,\nu_z}G(f,x,z,\nu_x,\nu_z)<2f(x,z).
$$
This inequality follows from \Cref{P1} below. Consequently, the equation \eqref{MEq} holds in $B_2\times B_2$.
\end{proof}

\begin{proposition}\label{P1}
Let $f$ and $T$ be as at the beginning of the proof of \Cref{LOCREG}, and fix $x,z \in B_1\times B_1 \setminus T$. In addition, let $G$ be as in \eqref{Qdef}. Then, it holds that
\begin{equation*}
				\sup_{\nu_x,\nu_z}G(f,x,z,\nu_x,\nu_z)+\inf_{\nu_x,\nu_z}G(f,x,z,\nu_x,\nu_z)<2f(x,z).
\end{equation*}
\end{proposition}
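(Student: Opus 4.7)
My plan is to exploit the concavity of the H\"older radial function $r\mapsto r^\gamma$ in the variable $r=|x-z|$, balanced against the convexity of $|x+z|^2$ and the geometric-jump barrier provided by the annular step function $f_2$. Since the operator $G(f,x,z,\cdot,\cdot)$ is linear in $f$, I split $G(f,\cdot)=G(f_1,\cdot)-G(f_2,\cdot)$ and analyze the identity $\sup G + \inf G < 2f(x,z)$ in two regimes depending on $|x-z|$.

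\emph{Regime 1: $|x-z|>N\varepsilon/10$.} Here $f_2$ vanishes in a neighborhood, so it suffices to estimate $G(f_1,\cdot)$. I will use a symmetric pair of test vectors, $(\nu_x,\nu_z)=(\varepsilon\hat q,-\varepsilon\hat q)$ as an upper bound for the supremum and $(-\varepsilon\hat q,\varepsilon\hat q)$ as an upper bound for the infimum, where $\hat q=(x-z)/|x-z|$. With either choice the rotation $P_{\nu_z,-\nu_x}$ reduces to the identity, so the $\beta(x)$--integral collapses to $\dashint_{B_\varepsilon^{\hat q}} f_1(x+h,z+h)\,\dL(h)$, which only affects the $|x+z|^2$ part. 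Summing the two candidates and Taylor expanding $(|x-z|\pm 2\varepsilon)^\gamma$ to second order, the first-order terms cancel and leave a strictly negative contribution of order $C\gamma(1-\gamma)|x-z|^{\gamma-2}\varepsilon^2$. This is then set against the positive contribution $\beta(x)\,c_n\varepsilon^2$ from the $|x+z|^2$ term and the asymmetry term weighted by $\alpha(x)-\alpha(z)$, which is controlled by $\omega_\alpha(|x-z|)\,|x-z|^{\gamma-1}\varepsilon$ thanks to the uniform continuity of $\alpha$. The precise range $\gamma<\alpha_{\mathrm{min}}/\alpha_{\mathrm{max}}-\kappa$ in \eqref{gamma exponent} is tuned to ensure the negative $\alpha(z)$-weighted concavity term beats the $\beta(x)$-weighted convexity term even after the $\alpha$ mismatch is absorbed, giving the strict inequality.

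\emph{Regime 2: $|x-z|\leq N\varepsilon/10$.} Here $(x,z)\in A_i$ for some $i\in\{1,\dots,N\}$, and the Taylor argument breaks down because the step size $\varepsilon$ is comparable to the separation. The strict inequality now comes from $-G(f_2,\cdot)$: the values of $f_2$ on consecutive annuli are in geometric ratio $C^2$, so a single annular transition produced by the step $\nu_z$ or by the integration variable $h$ in the $\beta(x)$-average yields a deficit of size at least $(C^2-1)C^{2(N-i-1)}\varepsilon^\gamma$, which dwarfs the $O(\varepsilon^\gamma)$ fluctuation of $f_1$ across the same scale once $C$ is chosen large depending on $p_{\mathrm{min}},p_{\mathrm{max}},n,\sup u,\gamma$. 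A careful case split on whether the sup/inf strategies move $(x,z)$ radially outward (into annuli of larger $i$, smaller $f_2$) or keep them inside the current annulus gives $\sup G(f_2,\cdot)+\inf G(f_2,\cdot)\geq 2f_2(x,z)+\kappa'C^{2(N-i)}\varepsilon^\gamma$ for some $\kappa'>0$, and this surplus absorbs the $G(f_1,\cdot)-f_1(x,z)$ error.

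\emph{The main obstacle} is the constant bookkeeping in Regime 2, and in particular verifying compatibility of the three interacting choices $C$, $N\geq 10^2C/\gamma$, and $\gamma<\alpha_{\mathrm{min}}/\alpha_{\mathrm{max}}-\kappa$ with the $(\alpha(x)-\alpha(z))$--asymmetry term that has no analog in the translation-invariant case of \cite{luirop}. Handling this asymmetry requires using both the uniform continuity of $\alpha$ on $\overline\Omega$ and the lower bound $\alpha_{\mathrm{min}}>0$, and the fact that the extra term in \eqref{Qdef} is an integral of $f_1$ over a ball not centered at $(x,z)$, so it produces a $|x-z|^{\gamma-1}\varepsilon$ rather than $|x-z|^{\gamma-2}\varepsilon^2$ contribution; this lower-order scaling is precisely what permits its absorption provided $\gamma$ satisfies \eqref{gamma exponent}. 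Once the constants are balanced, both regimes yield the strict inequality and the proposition follows.
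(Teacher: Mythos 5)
There is a genuine gap in your Regime 1 argument, and it is structural. You propose to bound $\sup_{\nu_x,\nu_z}G(f,x,z,\nu_x,\nu_z)$ from above by evaluating $G$ at the aligned pair $(\nu_x,\nu_z)=(\varepsilon\hat q,-\varepsilon\hat q)$, but a specific evaluation only gives a \emph{lower} bound for a supremum. You would need a separate argument that this aligned choice is in fact the maximizer, which is not at all clear here: the $|x+z|^2$ term in $f_1$ rewards $\nu_x+\nu_z$ aligned with $x+z$ (which the pair $(\varepsilon\hat q,-\varepsilon\hat q)$ annihilates), and the $\beta(x)$-integral and the $(\alpha(x)-\alpha(z))$-term shift the optimum further. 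The paper avoids this trap: it fixes arbitrary $(\nu_x,\nu_z)$ that (approximately) attain the supremum and then \emph{chooses} $(\varrho_x,\varrho_z)$ for the infimum depending on how aligned $\nu_x-\nu_z$ is with $x-z$. When $\abs{(\nu_x-\nu_z)_V}\geq(\tau+1)\varepsilon$ (case a) it takes $\varrho_x=-\nu_x,\varrho_z=-\nu_z$ and closes via the second-order concavity; when $\abs{(\nu_x-\nu_z)_V}\leq(\tau+1)\varepsilon$ (case b) it takes $\varrho_x=-\varepsilon\hat q,\varrho_z=\varepsilon\hat q$ and wins on the first-order term $C\gamma\abs{x-z}^{\gamma-1}[(\nu_x-\nu_z)_V-2\varepsilon]<0$, which you never exploit. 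Without a case split of this kind your Regime 1 bound on $\sup G$ simply does not hold.

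A secondary issue is your treatment of the $(\alpha(x)-\alpha(z))$-term. You invoke the modulus of continuity $\omega_\alpha$ to control it, but the paper explicitly does \emph{not} use the continuity of $p$ (hence of $\alpha$) in Section 4 — only the uniform bounds $\alpha_{\min},\alpha_{\max}$ — and this is essential: $|x-z|$ ranges up to $2$ in $B_1\times B_1\setminus T$, where $\omega_\alpha(|x-z|)$ gives nothing small. The condition $\gamma<\alpha_{\min}/\alpha_{\max}-\kappa$ is exactly what makes the worst-case coefficient $\textbf{[V]}<4(\gamma\alpha_{\max}-\alpha_{\min})+\kappa\alpha_{\max}$ strictly negative, not a balance between a concavity term and a $\beta(x)$-convexity term as you describe (in case a the $\beta(x)\cdot 8(1-\tau)$ contribution is made negligible by taking $\tau$ close to $1$). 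Your Regime 2 discussion is essentially sound — using $\inf G(f_2)\geq 0$ and the geometric jump of $f_2$ into the inner annulus $A_{i-1}$ to force $\sup G(f_2)>6C\varepsilon^\gamma+2f_2(x,z)$ matches the paper — but the Regime 1 argument needs to be replaced by the arbitrary-$\nu$, two-subcase analysis.
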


The main part of the section is to show this estimate for $G$. This is done in several steps below.

\subsection{Proof of \Cref{P1}}

Let $V\subset\R^n$ be the space spanned by $x-z\neq 0$. We denote the orthogonal complement of $V$ by $V^\bot$, i.e.,
\begin{align*}
				V^\bot=\set{y\in\R^n}{\langle y,x-z \rangle =0}.
\end{align*}
Given any $y\in\R^n$, we can decompose $y=y_V(x-z)/|x-z|+y_{V^\bot}$, where $y_V\in\R$ is the scalar projection of $y$ onto $V$ and $y_{V^\bot}\in V^\bot$, respectively. For the decomposed point it holds
\begin{align*}
				y_V&=\prodin{y}{\frac{x-z}{\abs{x-z}}},\\
								\abs{y_{V^\bot}}&=\sqrt{\abs{y}^2-y_V^2}.
\end{align*}
By using this notation, the second order Taylor's expansion of $f_1$ is
\begin{equation}\label{fTaylor}
\begin{split}
				~& \hspace{-33pt} f_1(x+h_x,z+h_z) - f_1(x,z) \\
				= ~& C\gamma\abs{x-z}^{\gamma-1}(h_x-h_z)_V+2\prodin{x+z}{h_x+h_z} \\
				& + \frac{1}{2}C\gamma\abs{x-z}^{\gamma-2}\braces{(\gamma-1)(h_x-h_z)_V^2+\abs{(h_x-h_z)_{V^\bot}}^2} \\
				& + \abs{h_x+h_z}^2 + \mathcal{E}_{x,z}(h_x,h_z),
\end{split}
\end{equation}
where $\mathcal{E}_{x,z}(h_x,h_z)$ is the error term. In the above, we used the calculations
$$
\prodin{\nabla f_1(x,z)}{(h_x^T,h_z^T)}=C\gamma|x-z|^{\gamma-2}\prodin{x-z}{h_x-h_z}+2\prodin{x+z}{h_x+h_z}
$$
and
$$
D^2f_1=\left[ \begin{array}{cc}A & -A\\
-A & A \end{array} \right] + 2\left[ \begin{array}{cc}\text{I} & \text{I} \\
\text{I}  & \text{I}\end{array} \right]
$$
with
$$
A :=C\gamma |x-z|^{\gamma-2} \bigg[(\gamma -2)\frac{x-z}{|x-z|}\otimes \frac{x-z}{|x-z|}+\textrm{I} \bigg].
$$
The matrix $\text{I}$ stands for the $n\times n$ identity matrix, and we denote the tensor product of two vectors by $\otimes$, i.e., $h\otimes s:=hs^T$ for vectors $h,s \in \mathbb R^n$. By recalling the elementary formula $h^T (s\otimes s) h=\prodin{h}{s}^2$ for all $h,s\in \mathbb R^n$, we get \eqref{fTaylor}.

By Taylor's theorem, the error term satisfies
\begin{equation*}
				\abs{\mathcal{E}_{x,z}(h_x,h_z)}\leq C\abs{(h_x^T,h_z^T)}^3(\abs{x-z}-2\varepsilon)^{\gamma-3},
\end{equation*}
if $\abs{x-z}>2\varepsilon$. With the choice $N\geq\frac{100C}{\gamma}$ and if $\abs{x-z}>\frac{N}{10}\varepsilon$, we can estimate
\begin{align}
\begin{split}\label{ineqerror}
				\abs{\mathcal{E}_{x,z}(h_x,h_z)}&\leq C(2\varepsilon)^3\bigg(\frac{|x-z|}{2}\bigg)^{\gamma-3} \\
&\leq 64C\varepsilon^2|x-z|^{\gamma-2}\frac{\varepsilon}{|x-z|}\\
&\leq 10\abs{x-z}^{\gamma-2}\varepsilon^2,
\end{split}
\end{align}
because $\abs{h_x},\abs{h_z}\leq\varepsilon$. Therefore, to prove the result, we distinguish two separate cases. In the first case, we have $\abs{x-z}\leq\frac{N}{10}\varepsilon$ and in the second case, we have $\abs{x-z}>\frac{N}{10}\varepsilon$.
\subsubsection*{Proof of \Cref{P1}: Case $\abs{x-z}\leq N\frac{\varepsilon}{10}$}

In this case, we do not utilize the formula \eqref{fTaylor}. We use concavity and convexity estimates for the terms in $f_1$ and the properties of the annular step function $f_2$. For $x,z\in B_1$ and $\abs{h_x},\abs{h_z}<\varepsilon<1$, it holds
\begin{equation*}
				\abs{f_1(x+h_x,z+h_z)-f_1(x,z)}\leq 2C\varepsilon^\gamma+16\varepsilon\leq3C\varepsilon^\gamma
\end{equation*}
for $C>16$. Consequently by \eqref{Qdef}, we have
\begin{equation*}
				\sup_{h_x,h_z}G(f_1,x,z,h_x,h_z)\leq f_1(x,z)+3C\varepsilon^\gamma.
\end{equation*}
Together with $f_2\geq 0$, these estimates yield
\begin{equation}\label{eq101}
				\sup_{h_x,h_z} G(f,x,z,h_x,h_z)\leq f_1(x,z)+3C\varepsilon^\gamma.
\end{equation}
Find $i\in \{1,2,\ldots,N\}$ such that $(i-1)\frac{\varepsilon}{10}<\abs{x-z}\leq i\frac{\varepsilon}{10}$ and choose $\abs{\nu_x},\abs{\nu_z}<\varepsilon$ such that $(x+\nu_x,z+\nu_z)\in A_{i-1}$. Then for $C>1$ large enough, we can estimate
\begin{equation*}
\begin{split}
				\sup_{h_x,h_z}G(f_2,x,z,h_x,h_z) \geq ~& G(f_2,x,z,\nu_x,\nu_z) \\
				\geq ~& \alpha(z)f_2(x+\nu_x,z+\nu_z) \\
				= ~& \alpha(z)C^{2(N-i+1)}\varepsilon^\gamma \\
				= ~& \alpha(z)\pare{C^2-\frac{2}{\alpha(z)}}C^{2(N-i)}\varepsilon^\gamma+2f_2(x,z) \\
				> ~& 6C\varepsilon^\gamma+2f_2(x,z),
\end{split}
\end{equation*}
where we use $f_2\geq 0$ in the second inequality and $\alpha(z)>\alpha_{\text{min}}>0$ for all $z\in \Omega$ in the last inequality. Therefore, by $f=f_1-f_2$ and \eqref{eq101} it holds
\begin{align*}
\begin{split}
				\inf_{h_x,h_z}G(f,x,z,h_x,h_z) \leq  ~& \sup_{h_x,h_z} G(f_1,x,z,h_x,h_z)-\sup_{h_x,h_z} G(f_2,x,z,h_x,h_z))\\
				\leq ~& f_1(x,z)-2f_2(x,z)-3C\varepsilon^\gamma.
\end{split}
\end{align*}
Combining this inequality with \eqref{eq101}, we get
\begin{equation*}
				\sup_{h_x,h_z}G(f,x,z,h_x,h_z)+\inf_{h_x,h_z}G(f,x,z,h_x,h_z) < 2f(x,z).
\end{equation*}
Hence, the proof of the case is complete.

\subsubsection*{Proof of \Cref{P1}: Case $\abs{x-z}>N\frac{\varepsilon}{10}$}

In this case, $f_2(x,z)=0$ and hence $f\equiv f_1$. We apply \eqref{fTaylor} to get the result. For $\eta>0$, let $\nu_x$, $\nu_z$ be such that
\begin{equation*}
				\sup_{h_x,h_z}G(f,x,z,h_x,h_z)\leq G(f,x,z,\nu_x,\nu_z)+\eta.
\end{equation*}
Therefore for any $\abs{\varrho_x},\abs{\varrho_z}\leq\varepsilon$, we get the following inequality
\begin{align}
\begin{split}\label{ineqEXTRA}
				&\sup_{h_x,h_z}G(f,x,z,h_x,h_z) +\inf_{h_x,h_z}G(f,x,z,h_x,h_z) \\
&\leq G(f,x,z,\nu_x,\nu_z)+G(f,x,z,\varrho_x,\varrho_z)+\eta.
\end{split}
\end{align}
By \eqref{ineqerror} and $\abs{h_x},\abs{h_z}\leq\varepsilon$, the last two terms in \eqref{fTaylor} are bounded above by
\begin{equation*}
				(4+10\abs{x-z}^{\gamma-2})\varepsilon^2.
\end{equation*}
We denote
\begin{equation*}
E:=E(f,x,z,\gamma,\varepsilon):=f(x,z)+(4+10\abs{x-z}^{\gamma-2})\varepsilon^2,
\end{equation*}
and recall the notation $P_{h,s}$ denoting the rotation sending $h$ to $s$ for any vectors $\abs{h}=\abs{s}$ in $\mathbb R^n$. By \eqref{ineqEXTRA} and \eqref{Qdef}, it suffices to study
\begin{equation}\label{111}
\begin{split}
				\textbf{[I]} := ~& G(f,x,z,\nu_x,\nu_z)+G(f,x,z,\varrho_x,\varrho_z)-2E\\
				= ~& \alpha(z)\brackets{f(x+\nu_x,z+\nu_z)+f(x+\varrho_x,z+\varrho_z)-2E} \\
				~& + \beta(x)\Bigg[\displaystyle\dashint_{B_\varepsilon^{\nu_z}}  f(x+P_{\nu_z,-\nu_x}h,z+h)\dL(h) \\
				~&  +\displaystyle\dashint_{B_\varepsilon^{\varrho_z}} f(x+P_{\varrho_z,-\varrho_x}h,z+h)\dL(h)-2E\Bigg] \\
				~& + (\alpha(x)-\alpha(z))\Bigg[\displaystyle\dashint_{B_\varepsilon^{\nu_z}} f(x+\nu_x,z+h)\dL(h) \\
				~& +\displaystyle\dashint_{B_\varepsilon^{\varrho_z}} f(x+\varrho_x,z+h)\dL(h)-2E\Bigg].				
\end{split}
\end{equation}
For simplicity, we decompose the previous expression into three terms to be examined separately, i.e.,
\begin{equation}\label{234}
				 \textbf{[I]} = \alpha(z)\textbf{[II]}+\beta(x)\textbf{[III]}+(\alpha(x)-\alpha(z))\textbf{[IV]}.
\end{equation}
Then by \eqref{fTaylor}, we have
\begin{equation}\label{II}
\begin{split}
			\textbf{[II]} \leq ~& C\gamma\abs{x-z}^{\gamma-1}\brackets{(\nu_x-\nu_z)_V+(\varrho_x-\varrho_z)_V} \\
			~& + 2\prodin{x+z}{(\nu_x+\nu_z)+(\varrho_x+\varrho_z)} \\
			~& + \frac{1}{2}C\gamma\abs{x-z}^{\gamma-2}\Big\{(\gamma-1)\brackets{(\nu_x-\nu_z)^2_V+(\varrho_x-\varrho_z)^2_V} \\
			~& +\brackets{\abs{(\nu_x-\nu_z)_{V^\bot}}^2+\abs{(\varrho_x-\varrho_z)_{V^\bot}}^2}\Big\}.
\end{split}
\end{equation}
Note that the first order terms in \textbf{[III]} vanishes when we integrate over the ball. Therefore, we can estimate
\begin{equation}\label{III}
\begin{split}
			\textbf{[III]} \leq ~& \frac{1}{2}C\gamma\abs{x-z}^{\gamma-2}\cdot \\
			~& \cdot\Bigg\{\displaystyle\dashint_{B_\varepsilon^{\nu_z}}\brackets{(\gamma-1) (h-P_{\nu_z,-\nu_x}h)^2_V+\abs{(h-P_{\nu_z,-\nu_x}h)_{V^\bot}}^2}\dL(h) \\
			~&  +\displaystyle\dashint_{B_\varepsilon^{\varrho_z}}\brackets{(\gamma-1)(h-P_{\varrho_z,-\varrho_x}h)^2_V+\abs{(h-P_{\varrho_z,-\varrho_x}h)_{V^\bot}}^2}\dL(h)\Bigg\}.
\end{split}
\end{equation}
In addition, it holds
\begin{equation}\label{IV}
\begin{split}
			\textbf{[IV]} \leq ~& C\gamma\abs{x-z}^{\gamma-1}(\nu_x+\varrho_x)_V + 2\prodin{x+z}{\nu_x+\varrho_x} \\
			~& + \frac{1}{2}C\gamma\abs{x-z}^{\gamma-2}\cdot \\
			~& \cdot\Bigg\{\displaystyle\dashint_{B_\varepsilon^{\nu_z}}\brackets{(\gamma-1)(\nu_x-h)^2_V+\abs{(\nu_x-h)_{V^\bot}}^2}\dL(h) \\
			~& \hspace{11pt}  +\displaystyle\dashint_{B_\varepsilon^{\varrho_z}}\brackets{(\gamma-1)(\varrho_x-h)^2_V+\abs{(\varrho_x-h)_{V^\bot}}^2}\dL(h)\Bigg\}.
\end{split}
\end{equation}

We distinguish between two cases depending on the value of $(\nu_x-\nu_z)^2_V$ and fix $\tau_0<\tau<1$ with $0<\tau_0<1$ defined later.

\paragraph{a) Case $\abs{(\nu_x-\nu_z)_V}\geq(\tau+1)\varepsilon$:}
In this case, we choose $\varrho_x=-\nu_x$ and $\varrho_z=-\nu_z$. By replacing these vectors in the inequalities $\textbf{[II]}$, $\textbf{[III]}$ and $\textbf{[IV]}$ and using symmetry, we obtain
\begin{equation*}
\begin{split}
			\textbf{[II]} \leq ~& C\gamma\abs{x-z}^{\gamma-2}\brackets{(\gamma-1)(\nu_x-\nu_z)^2_V+\abs{(\nu_x-\nu_z)_{V^\bot}}^2}, \\
			\textbf{[III]} \leq ~& C\gamma\abs{x-z}^{\gamma-2}\displaystyle\dashint_{B_\varepsilon^{\nu_z}}\abs{(h-P_{\nu_z,-\nu_x}h)_{V^\bot}}^2\dL(h), \\
			\textbf{[IV]} \leq ~& C\gamma\abs{x\!-\!z}^{\gamma-2}\Bigg[(\gamma\!-\!1)\displaystyle\dashint_{B_\varepsilon^{\nu_z}}(\nu_x\!-\!h)^2_V\dL(h) \!+\!\displaystyle\dashint_{B_\varepsilon^{\nu_z}}\abs{(\nu_x\!-\!h)_{V^\bot}}^2\dL(h)\Bigg].
\end{split}
\end{equation*}
We used $\gamma-1<0$ and the choice $P_{\nu_z,-\nu_x}=P_{-\nu_z,\nu_x}$ in the estimate for $\textbf{[III]}$. By assumption, it holds $(\nu_x-\nu_z)^2_V\geq(\tau+1)^2\varepsilon^2$ implying
$$
\abs{(\nu_x-\nu_z)_{V^\bot}}^2\leq\brackets{4-(\tau+1)^2}\varepsilon^2.
$$
Thus, we need to obtain uniform bounds for the terms $(\nu_x-h)^2_V$, $\abs{(\nu_x-h)_{V^\bot}}^2$ and $\abs{(h-P_{\nu_z,-\nu_x}h)_{V^\bot}}^2$ for $h\in B_\varepsilon^{\nu_z}$.

The assumption $\abs{(\nu_x-\nu_z)_V}\geq(\tau+1)\varepsilon$, together with $|\nu_x|,|\nu_z|\leq \varepsilon$ and Pythagoras' theorem, implies
\begin{align}\begin{split}\label{est on vectors}
				\left\{\begin{array}{lll}
								\tau\varepsilon\leq\abs{(\nu_x)_V}\leq\varepsilon, & & 0\leq\abs{(\nu_x)_{V^\bot}}\leq\sqrt{1-\tau^2}\ \varepsilon, \\ 
								\tau\varepsilon\leq\abs{(\nu_z)_V}\leq\varepsilon, & & 0\leq\abs{(\nu_z)_{V^\bot}}\leq\sqrt{1-\tau^2}\ \varepsilon.
				\end{array}\right.
\end{split}
\end{align}
Moreover, the same facts yield
\begin{equation*}
				\abs{(\nu_x+\nu_z)_V}\leq (1-\tau)\varepsilon \ \ \ \mbox{ and } \ \ \ \abs{(\nu_x+\nu_z)_{V^\bot}}\leq 2\sqrt{1-\tau^2}\ \varepsilon.
\end{equation*}
By combining these and using Pythagoras' theorem, we get
\begin{equation}\label{EQ-A1}
				\abs{\nu_x+\nu_z}<\sqrt{8}\ \sqrt{1-\tau}\ \varepsilon,
\end{equation}
since $\tau<1$. Let $h\in B_\varepsilon^{\nu_z}$. Then, we have
$$
0=\prodin{h}{\nu_z}=h_V(\nu_z)_V+\prodin{h_{V^\bot}}{(\nu_z)_{V^\bot}}
$$
implying
\begin{equation*}
				h_V=-\frac{\prodin{h_{V^\bot}}{(\nu_z)_{V^\bot}}}{(\nu_z)_V}.
\end{equation*}
In addition by applying this equality together with \eqref{est on vectors} and $\abs{h_{V^\bot}}\leq\abs{h}\leq\varepsilon$, we obtain
\begin{equation*}
\abs{h_V}\leq\displaystyle\frac{\varepsilon}{\tau}\sqrt{1-\tau^2}.								
\end{equation*}
Consequently, we get the estimates
\begin{equation}\label{estimate01}
(\nu_x-h)_V \geq \bigg(\tau-\frac{\sqrt{1-\tau^2}}{\tau}\bigg)\varepsilon
\end{equation}
and
\begin{equation}\label{estimate02}
\abs{(\nu_x-h)_{V^\bot}} \leq \abs{(\nu_x)_{V^\bot}}+\abs{h_{V^\bot}} \leq \pare{1+\sqrt{1-\tau^2}}\varepsilon.
\end{equation}
We can assume that $\tau_0$ is close enough to 1 guaranteeing the positivity of the quantity $\tau-\tau^{-1}\sqrt{1-\tau^2}$. In order to obtain the last estimate needed, we recall that $P_{\nu_z,-\nu_x}$ is any rotation sending the vector $\nu_z$ to $-\nu_x$. In particular, we choose a rotation satisfying
\begin{equation*}
				\abs{h-P_{\nu_z,-\nu_x}h}\leq\abs{\nu_z-P_{\nu_z,-\nu_x}\nu_z}=|\nu_z+\nu_x|
\end{equation*}
for every $\abs{h}\leq\varepsilon$. Hence by recalling \eqref{EQ-A1}, we get
\begin{equation}\label{estimate03}
				\abs{(h-P_{\nu_z,-\nu_x}h)_{V^\bot}}^2\leq 8(1-\tau)\varepsilon^2.
\end{equation}

By replacing the estimates \eqref{estimate01}, \eqref{estimate02} and \eqref{estimate03} in $\textbf{[II]}$, $\textbf{[III]}$ and $\textbf{[IV]}$, we can calculate
\begin{equation*}
\begin{split}
			\textbf{[II]} \leq ~& C\gamma\abs{x-z}^{\gamma-2}\varepsilon^2\brackets{(\gamma-1)(\tau+1)^2+4-(\tau+1)^2}, \\
			 \textbf{[III]} \leq ~& C\gamma\abs{x-z}^{\gamma-2}\varepsilon^2\brackets{8(1-\tau)}, \\
			 	\textbf{[IV]} \leq ~& C\gamma\abs{x-z}^{\gamma-2}\varepsilon^2\brackets{(\gamma-1)\pare{\tau-\frac{\sqrt{1-\tau^2}}{\tau}}^2+\pare{1+\sqrt{1-\tau^2}}^2}.
\end{split}
\end{equation*}
In addition by \eqref{234}, we get
\begin{equation*}
					\textbf{[I]} \leq \textbf{[V]}\cdot C\gamma\abs{x-z}^{\gamma-2}\varepsilon^2,
\end{equation*}
where $\textbf{[V]}$ is equal to
\begin{multline*}
				(\gamma-1)\brackets{\alpha(z)(\tau+1)^2+(\alpha(x)-\alpha(z))\pare{\tau-\frac{\sqrt{1-\tau^2}}{\tau}}^2} \\
				+ (\alpha(x)-\alpha(z))(1+\sqrt{1-\tau^2}\ )^2+
				\alpha(z)\brackets{(4-(\tau+1)^2)}+\beta(x)\ 8(1-\tau).
\end{multline*}
The assumption on $\gamma$ in \eqref{gamma exponent} implies that we can choose $\tau_0:=\tau_0(\kappa)<1$ close enough to $1$ such that the previous expression is negative, i.e.,
 \begin{align*}
\textbf{[V]}&<(\gamma-1)\big(4\alpha(z)+\alpha(x)-\alpha(z)\big)+\alpha(x)-\alpha(z)+\kappa\alpha_{\text{max}} \\
&<4\big(\gamma\alpha_{\text{max}}-\alpha_{\text{min}}\big)+\kappa\alpha_{\text{max}} \\
&<0.
\end{align*}
Now, by recalling \eqref{111}, we have
\begin{multline*}
				G(f,x,z,\nu_x,\nu_z)+G(f,x,z,\omega_x,\omega_z)-2f(x,z)\\
				\leq 8\varepsilon^2+(20+\textbf{[V]}\cdot C\gamma)\abs{x-z}^{\gamma-2}\varepsilon^2.
\end{multline*}
By choosing $C>1$ large enough, we obtain
\begin{equation*}
				(20+\textbf{[V]}\cdot C\gamma)\abs{x-z}^{\gamma-2}\varepsilon^2<-10^8\abs{x-z}^{\gamma-2}\varepsilon^2<-10^7\varepsilon^2.
\end{equation*}
This estimate yields
\begin{equation*}
				G(f,x,z,\nu_x,\nu_z)+G(f,x,z,\omega_x,\omega_z)-2f(x,z)<0.
\end{equation*}

\paragraph{b) Case $\abs{(\nu_x-\nu_z)_V}\leq(\tau+1)\varepsilon$:}
In this case, the first order terms in \eqref{fTaylor} imply the result. By choosing $\varrho_x=-\varepsilon\frac{x-z}{\abs{x-z}}$ and $\varrho_z=\varepsilon\frac{x-z}{\abs{x-z}}$ in $V$ and utilizing these in \eqref{II}, \eqref{III} and \eqref{IV}, we get
\begin{align*}
			\textbf{[II]} \leq ~& C\gamma\abs{x-z}^{\gamma-1}\brackets{(\nu_x-\nu_z)_V-2\varepsilon} +2\prodin{x+z}{\nu_x+\nu_z} \\
			~& +\frac{1}{2}C\gamma\abs{x-z}^{\gamma-2}\braces{(\gamma-1)\brackets{(\nu_x-\nu_z)^2_V+4\varepsilon^2}+\abs{(\nu_x-\nu_z)_{V^\bot}}^2},
\end{align*}
\begin{align*}
			\textbf{[III]} \leq ~& \frac{1}{2}C\gamma\abs{x-z}^{\gamma-2}\cdot \\
			~&\cdot\Bigg\{\displaystyle\dashint_{B_\varepsilon^{\nu_z}}\brackets{(\gamma-1)(h-P_{\nu_z,-\nu_x}h)^2_V+\abs{(h-P_{\nu_z,-\nu_x}h)_{V^\bot}}^2}\dL(h)\Bigg\}
\end{align*}
and
\begin{align*}
			\textbf{[IV]} \leq ~& C\gamma\abs{x-z}^{\gamma-1}\brackets{(\nu_x)_V-\varepsilon} + 2\prodin{x+z}{\nu_x-\varepsilon\frac{x-z}{\abs{x-z}}} \\
			~& + \frac{1}{2}C\gamma\abs{x-z}^{\gamma-2}  \cdot\Bigg\{\displaystyle\dashint_{B_\varepsilon^{\nu_z}}\brackets{(\gamma-1)(\nu_x-h)^2_V+\abs{(\nu_x-h)_{V^\bot}}^2}\dL(h) \\
			~&   +(\gamma-1)\varepsilon^2+\displaystyle\dashint_{B_\varepsilon^{x-z}}\abs{h}^2\dL(h)\Bigg\}.
\end{align*}
The second order terms in these inequalities can be estimated above by
\begin{equation*}
				3C\gamma\abs{x-z}^{\gamma-2}\varepsilon^2.
\end{equation*}
In addition, we deduce that $(\nu_x-\nu_z)_V<\brackets{1+\pare{\frac{\tau+1}{2}}^2}\varepsilon$. Therefore, we have
\begin{equation*}
\begin{split}
			\textbf{[II]} \leq ~& C\gamma\abs{x-z}^{\gamma-1}\brackets{\pare{\frac{\tau+1}{2}}^2-1}\varepsilon +4\abs{x+z}\varepsilon + 3C\gamma\abs{x-z}^{\gamma-2}\varepsilon^2, \\
			 \textbf{[III]} \leq ~& 3C\gamma\abs{x-z}^{\gamma-2}\varepsilon^2, \\
			 \textbf{[IV]} \leq ~& 4\abs{x+z}\varepsilon + 3C\gamma\abs{x-z}^{\gamma-2}\varepsilon^2.
\end{split}
\end{equation*}
By combining all these and recalling \eqref{111} and \eqref{234}, we get
\begin{equation*}
\begin{split}
				& G(f,x,z,\nu_x,\nu_z)+G(f,x,z,\varrho_x,\varrho_z)-2f(x,z) \\
				\leq ~& C\alpha(z)\gamma\abs{x-z}^{\gamma-1}\brackets{\pare{\frac{\tau+1}{2}}^2-1}\varepsilon+4\alpha(x)\abs{x+z}\varepsilon +8\varepsilon^2 \\
				~& +(20+3C\gamma)\abs{x-z}^{\gamma-2}\varepsilon^2 \\
				\leq ~& C\alpha(z)\gamma\abs{x-z}^{\gamma-1}\brackets{\pare{\frac{\tau+1}{2}}^2-1}\varepsilon+8\varepsilon^2+\pare{\gamma+\frac{2}{C}}\gamma\abs{x-z}^{\gamma-1}\varepsilon \\
				\leq  ~& \gamma\abs{x-z}^{\gamma-1}\varepsilon\braces{\gamma+1+C\alpha_{\min}\brackets{\pare{\frac{\tau+1}{2}}^2-1}}+8\varepsilon^2.
\end{split}
\end{equation*}
As in the previous case, we can choose the constant $C>1$ large enough to ensure the negativity of the previous equation. Thus, the proof is complete.

\section{Regularity near the boundary}\label{sec:boundary reg}
In this section, we show that the value function of the game is also asymptotically continuous near the boundary, if we assume some regularity on the boundary of the set. The proof is based on finding a suitable barrier function and a strategy for the other player so that the process under the barrier function is super- or submartingale depending on the form of the function. Then, the result follows by analyzing the barrier function and iterating the argument.

Fix $r>0$ and $z\in \mathbb R^n$ and define a barrier function
\begin{equation}\label{def barrier v}
v(x)=a|x-z|^\sigma+b
\end{equation}
for all $x\in \mathbb R^n\setminus \overline{B}_{r}(z)$ with some constants $\sigma<0$, $a<0$ and $b\geq 0$. Recall the auxiliary function $W$ defined in \eqref{Wu}. First, we prove the following properties of the function $v$.

\begin{lemma}\label{W lemma}
Let $r>0$ and $z\in \mathbb R^n$ and define the function $v$ as in \eqref{def barrier v} with constants $a<0$, $b\geq 0$ and $\sigma<0$. Then, there is a constant $C>0$ such that
\begin{align}\label{W lemma1}
\sup_{|\nu|=\varepsilon} W(v;x,\nu)\leq W\bigg(v;x,\varepsilon\frac{x-z}{|x-z|}\bigg)+ C\varepsilon^3
\end{align}
and
\begin{align}
\begin{split}\label{W lemma2}
&W\bigg(v;x,\varepsilon\frac{x-z}{|x-z|}\bigg)+W\bigg(v;x,-\varepsilon\frac{x-z}{|x-z|}\bigg) \\
&<2 v(x)+\varepsilon^2 a\sigma|x-z|^{\sigma-2}\Big(\beta(x)+\alpha(x)(\sigma -1)\Big) + C\varepsilon^3
\end{split}
\end{align}
for all $\varepsilon>0 $ and $x\in \mathbb R^n\setminus \overline{B}_{r}(z)$.
\begin{proof}
To establish the result, we apply Taylor's formula to the function $v$. The function $v$ is real-analytic so we obtain by Taylor's formula
\begin{align*}
v(x+h) &=v(x)+ \big\langle \nabla v(x),h \big\rangle+\frac{1}{2} \big \langle D^2v(x)h,h\big \rangle+ \mathcal O(|h|^3) \\
&=v(x)\!+\!\frac{1}{2} a\sigma|x\!-\!z|^{\sigma-2}\bigg(2\langle x\!-\!z,h\rangle\!+\!|h|^2 
 \! +\!(\sigma\!-\!2)\frac{\langle h,x\!-\!z\rangle^2}{|x\!-\!z|^2}\bigg) \!+\!\mathcal O(|h|^3)
\end{align*}
for all $h\in \mathbb R^n$. Let $C>0$ be big enough so that $\big|\mathcal O(|h|^3)\big|\leq C\varepsilon^3$ for all $|h|\leq \varepsilon$. The function $v$ is radially increasing, and the average integral over the first order term of $v$ vanishes. In addition, we have
$$
\int_{B_\varepsilon^{x-z}}\langle h,x-z\rangle^2 \-d\mathcal L^{n-1}(h)=0.
$$
Thus, Taylor's formula proves the equation \eqref{W lemma1}.

Next, we prove the equation \eqref{W lemma2}. Recall the notations $V=\text{span} \{x-z\}$ and the orthogonal complement $V^\bot$. For a vector $h\in V$ such that $h=(x-z)\varepsilon/|x-z|$, we get
\begin{align*}
v(x+h)&\leq v(x)+\frac{1}{2} a\sigma|x-z|^{\sigma-2}\Big(2|x-z|\varepsilon+\varepsilon^2(\sigma-1)\Big)+C\varepsilon^3.
\end{align*}
Therefore, we obtain
\begin{align}\label{estimate grad}
v(x+h)+v(x-h)\leq 2v(x)+\varepsilon^2 a\sigma(\sigma -1)|x-z|^{\sigma-2} + C\varepsilon^3.
\end{align}
Also, for any vector $y\in B^{z-x}_\varepsilon$, we have $\langle y,x-z\rangle=0$. Hence, by a short calculation, we have
\begin{align*}
\begin{split}\label{estimate noise}
\vint_{B_\varepsilon^{x-z}}v(x+y)~d\mathcal L^{n-1}(y)&\leq v(x)+C\varepsilon^3+\frac{1}{2}a\sigma|x-z|^{\sigma-2}\vint_{B_\varepsilon^{x-z}}\abs{y}~d\mathcal L^{n-1}(y) \\
&=v(x)+C\varepsilon^3+\frac{1}{2}a\sigma|x-z|^{\sigma-2}\varepsilon^2\bigg(\frac{n-1}{n+1}\bigg).
\end{split}
\end{align*}
Thus, this inequality together with $B_\varepsilon^{z-x}=B_\varepsilon^{x-z}$ and \eqref{estimate grad} implies
\begin{align*}
&W\bigg(v;x,\varepsilon\frac{x-z}{|x-z|}\bigg)+W\bigg(v;x,-\varepsilon\frac{x-z}{|x-z|}\bigg) \\
&=\alpha(x)\bigg(v\bigg(x-\varepsilon\frac{x-z}{|x-z|}\bigg)+v\bigg(x+\varepsilon\frac{x-z}{|x-z|}\bigg)\bigg)\\
&\hspace{12pt} +2\beta(x)\vint_{B_\varepsilon^{z-x}}v(x+\tilde{h})~d\mathcal L^{n-1}(\tilde{h}) \\
&\leq 2 v(x)+\varepsilon^2 a\sigma|x-z|^{\sigma-2}\Big(\beta(x)+\alpha(x)(\sigma -1)\Big) +2 C\varepsilon^3. \qedhere
\end{align*}
\end{proof}
\end{lemma}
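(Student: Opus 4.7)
The plan is to apply a second-order Taylor expansion of $v$ around $x$. Since $v(y)=a|y-z|^\sigma+b$ is real-analytic on $\mathbb R^n\setminus\{z\}$ and $x\in\mathbb R^n\setminus\overline B_r(z)$, for $|h|\leq\varepsilon$ (with $\varepsilon<r/2$, say) the Taylor remainder is $O(|h|^3)$ uniformly. A direct computation yields $\nabla v(x)=a\sigma|x-z|^{\sigma-2}(x-z)$ and $D^2v(x)=a\sigma|x-z|^{\sigma-2}[I+(\sigma-2)\hat g\otimes\hat g]$, where $\hat g:=(x-z)/|x-z|$. Note that $a\sigma>0$ since $a<0$ and $\sigma<0$, so $\nabla v(x)$ is a positive multiple of $x-z$. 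The strategy is to substitute this expansion into $W$, exploit the central symmetry of $B_\varepsilon^\nu$ to eliminate the integral's first-order term, and then track the remaining first- and second-order contributions.

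For inequality \eqref{W lemma1}, the term $\alpha(x)v(x+\nu)$ contributes $\alpha(x)\langle\nabla v(x),\nu\rangle$ at first order, which over $|\nu|=\varepsilon$ is strictly maximized at $\nu^*:=\varepsilon\hat g$. The integral over $B_\varepsilon^\nu$ has vanishing first-order contribution, since the linear term $h\mapsto\langle\nabla v(x),h\rangle$ is odd and $B_\varepsilon^\nu$ is centrally symmetric. The second-order pieces are $O(\varepsilon^2)$ and depend on $\nu$, but for $\nu$ away from $\nu^*$ the first-order advantage of $\nu^*$ dominates, and for $\nu$ near $\nu^*$ the quadratic $\nu$-dependence is arranged so that $W(v;x,\nu)-W(v;x,\nu^*)\le0$ up to the $O(\varepsilon^3)$ Taylor remainder. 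Collecting these estimates yields \eqref{W lemma1}.

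For inequality \eqref{W lemma2}, evaluate $W(v;x,\nu^*)+W(v;x,-\nu^*)$. The linear Taylor terms in $v(x+\nu^*)$ and $v(x-\nu^*)$ cancel, while the quadratic forms give $(\nu^*)^TD^2v(x)\nu^*=a\sigma|x-z|^{\sigma-2}(\sigma-1)\varepsilon^2$, because $\nu^*\parallel\hat g$. Since $B_\varepsilon^{-\nu^*}=B_\varepsilon^{\nu^*}$ lies in the hyperplane $\hat g^{\perp}$, every $h$ in the disc of integration satisfies $\langle\nabla v(x),h\rangle=0$, which kills the integral's linear Taylor part; the quadratic part reduces to $\tfrac12 a\sigma|x-z|^{\sigma-2}\dashint_{B_\varepsilon^{\nu^*}}|h|^2\,d\mathcal L^{n-1}(h)=\tfrac12 a\sigma|x-z|^{\sigma-2}\tfrac{n-1}{n+1}\varepsilon^2$. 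Summing the $\alpha(x)$- and $2\beta(x)$-weighted contributions produces the RHS of \eqref{W lemma2} but with $\beta(x)\tfrac{n-1}{n+1}$ in place of $\beta(x)$; since $a\sigma|x-z|^{\sigma-2}\beta(x)>0$ and $\tfrac{n-1}{n+1}<1$, the strict inequality is automatic.

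The main technical difficulty is the bookkeeping of the second-order and remainder terms: carefully computing $\dashint_{B_\varepsilon^\nu}|h|^2\,d\mathcal L^{n-1}(h)=\tfrac{n-1}{n+1}\varepsilon^2$ on the $(n-1)$-dimensional cross-sectional ball, together with the verification that off-radial choices of $\nu$ cannot gain more than $O(\varepsilon^3)$ over $W(v;x,\nu^*)$ despite the varying quadratic corrections. Once these computations are in hand, both inequalities follow directly from the displayed Taylor formulas.
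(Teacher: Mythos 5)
Your treatment of \eqref{W lemma2} is correct and matches the paper: expand $v$ to second order, observe that the linear contributions cancel under $\nu\mapsto-\nu$ and vanish on $B_\varepsilon^{x-z}$, compute $\dashint_{B_\varepsilon^{x-z}}|h|^2\,d\mathcal L^{n-1}(h)=\frac{n-1}{n+1}\varepsilon^2$, and deduce strictness from $\frac{n-1}{n+1}<1$ together with $a\sigma>0$.

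Your argument for \eqref{W lemma1} has a gap, and one of its assertions is false as stated. Writing $\hat g:=(x-z)/|x-z|$ and $\nu^*:=\varepsilon\hat g$, the quadratic form on $|\nu|=\varepsilon$ is $\langle D^2v(x)\nu,\nu\rangle=a\sigma|x-z|^{\sigma-2}\big(\varepsilon^2+(\sigma-2)\langle\hat g,\nu\rangle^2\big)$; since $a\sigma>0$ and $\sigma-2<0$, this is \emph{smallest} at $\nu=\pm\nu^*$ and increases as $\nu$ tilts off-axis. So the second-order piece of $\alpha(x)\,v(x+\nu)$ actually favours off-radial directions, contrary to your claim that near $\nu^*$ ``the quadratic $\nu$-dependence is arranged so that $W(v;x,\nu)-W(v;x,\nu^*)\le 0$.'' What must rescue the estimate is the first-order loss $\alpha(x)a\sigma|x-z|^{\sigma-1}\big(\langle\hat g,\nu\rangle-\varepsilon\big)$, and one has to check that it dominates the second-order gain uniformly in $\nu$ (the loss is of order $|x-z|^{\sigma-1}\varepsilon(1-\cos\theta)$ against a gain of order $|x-z|^{\sigma-2}\varepsilon^2(1-\cos^2\theta)$); you flag this as the main technical difficulty and then leave it out. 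The paper sidesteps the case split entirely: since $a<0$ and $\sigma<0$, the barrier $v(y)=a|y-z|^\sigma+b$ is monotone increasing in $|y-z|$, so $v(x+\nu)\le v(x+\nu^*)$ holds \emph{exactly} for every $|\nu|=\varepsilon$, with no Taylor expansion of that term at all. Taylor is applied only to the averaged term, where the first-order part vanishes by central symmetry and the $\nu$-dependent second-order contribution $\frac{1}{2}a\sigma(\sigma-2)|x-z|^{\sigma-4}\dashint_{B_\varepsilon^\nu}\langle h,x-z\rangle^2\,d\mathcal L^{n-1}(h)$ is nonpositive for every $\nu$ and vanishes at $\nu=\nu^*$. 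This yields \eqref{W lemma1} directly.
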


Next, we prove the main theorem of this section. To get the result, we need to assume some regularity on the boundary of the set $\Omega$.
\begin{brc}

There are universal constants $r_0,s\in (0,1)$ such that for all $r \in (0,r_0]$ and $y\in \partial \Omega$ there exists a ball
\[
\begin{split}
B_{s r}(z) \subset B_r(y) \setminus \Omega
\end{split}
\]
for some $z \in B_r(y) \setminus \Omega$.
\end{brc}

Assume that the set $\Omega$ satisfies this boundary condition. Then, the following theorem holds.
\begin{theorem}\label{boundary regularity}
Let $y\in \partial \Omega$ and $r\in (0,r_0]$ with $r_0\in (0,1)$ and the ball $B_{sr}(z)\subset  B_r(y)\setminus \Omega$ given by the boundary regularity condition. Let $u$ be the solution to \eqref{DPP} with continuous boundary data $F$. Then for all $\eta>0$, there exist $\varepsilon_0>0$ and $k\geq 1$ such that
\begin{equation}\label{boundary estimate}
 u(x_0)-\sup_{B_{4r}(z)\cap \Gamma_{\varepsilon,\varepsilon}} F<\eta
\end{equation}
for all $0<\varepsilon<\varepsilon_0<1$ and $x_0\in B_{4^{1-k}r}(y)\cap \ol{\Omega}_\varepsilon$.

\begin{proof}
The idea is to find a suitable barrier function so that by Lemma \ref{W lemma}, if $P_\I$ pulls towards the point $z\in B_r(y) \setminus \Omega$, the game process inside the barrier function is a supermartingale. Then by utilizing the properties of the barrier function, we get the result by iteration.

Choose a constant $0<\theta<1$, independent of $r$, such that
$$
\theta:=\frac{s^\sigma-2^\sigma}{s^\sigma-4^\sigma}
$$
with the parameter $s>0$ from the boundary condition and a parameter $\sigma<0$ that will be defined later. We extend the function $F$ continuously to the set $\Gamma_{1,1}$ and use the same notation for the extension. Then, we choose $k\geq 1$ big enough such that
$$
\theta^k\Big(\sup_{\Gamma_{1,1}} F-\inf_{\Gamma_{1,1}} F\Big)<\eta.
$$
In addition, we denote the constants
$$
b_U:=\sup_{\Gamma_{\varepsilon,\varepsilon}} F
$$
and
$$
b_{4r}:=\sup_{B_{4r}(z)\cap \Gamma_{\varepsilon,\varepsilon}} F.
$$
Thus for the chosen $k$, independent of $\varepsilon$, it holds
\begin{equation}\label{estimate k and theta}
\theta^k\big(b_U-b_{4r}\big)<\eta.
\end{equation}
We define a function $v_k$ such that
$$
v_k(x)=a|x-z|^\sigma+b
$$
in $ B_{4^{2-k} r}(z)\setminus \overline{B}_{4^{1-k}s r}(z)$. The constants $a\leq 0$ and $b\geq 0$ can be calculated from the boundary values
\begin{align}
\begin{split}\label{extended v}
v_k=\begin{cases}b_{4r}+\theta^{k-1}\big(b_U-b_{4r}\big) ~~&\text{on }\partial B_{4^{2-k} r}(z)\\
b_{4r}~~&\text{on }\partial B_{4^{1-k}s r}(z).
\end{cases}
\end{split}
\end{align}
If $b_U=b_{4r}$, it holds $a=0$ and $b=b_U$. Otherwise, the values are $a<0$ and $b\geq 0$. We consider the case with $a<0$, since the proof of the other case is clear.

We extend the function $v_k$ to the set $\mathbb R^n\setminus \overline{B}_{4^{1-k}s r-2\varepsilon}(z)$ and use the same notation for the extension. We may assume that $x_0 \in \Omega$, and observe that
\begin{equation}\label{eq:x0 distance z}
x_0\in B_{4^{1-k}r}(y)\cap \Omega \subset \Omega \cap B_{2\cdot4^{1-k}r}(z)\setminus \overline{B}_{4^{1-k}s r}(z).
\end{equation}
Assume that $P_{\textrm{II}}$ plays the game by pulling towards the point $z$ given a turn, i.e., he moves the game token by the vector $-\varepsilon(x_m-z)/|x_m-z|$, if he wins the $m$th toss. This strategy is denoted by $\mathcal{S}_\II^*$. Also, fix a strategy for $P_{\textrm{I}}$  and denote it by $\mathcal{S}_\I$.

By using Lemma \ref{W lemma} for all $m\geq1$, we can estimate
\begin{align*}
&\mathbb E_{\mathcal{S}_\I,\mathcal{S}_\II^*}^{x_0}\big[v_k(x_{m+1})|x_0,\dots,x_m\big] \\
&\leq \frac{1-\delta(x_m)}{2}\bigg(\sup_{|\nu|=\varepsilon} W(v_k;x_m,\nu)+W\bigg(v_k;x_m,-\varepsilon \frac{x_m-z}{|x_m-z|}\bigg)\bigg) + \delta(x_m)F(x_m) \\
& \leq  \frac{1-\delta(x_m)}{2}\bigg(2 v_k(x_m)+\varepsilon^2 a\sigma|x_m-z|^{\sigma-2}\cdot \\
&\hspace{64pt}\cdot\big(\beta(x_m)+\alpha(x_m)(\sigma -1)\big) + 2C\varepsilon^3 \bigg) 
 + \delta(x_m)F(x_m)
\end{align*}
for some $C>0$. Next, we need to choose the constant $\sigma<0$ small enough. Recall that $\alpha_{\text{min}}>0$. Let us fix the value
$$
\sigma:=\frac{2}{\alpha_{\text{min}}}(\alpha_{\text{min}}-1)
$$
implying
$$
\beta(x_m)+\alpha(x_m)(\sigma -1)<-1
$$
for all $x_m \in \Omega$. In addition, we have
\begin{align*}
&a\sigma|x_m-z|^{\sigma-2}>a\sigma\big(\diam(\Omega)+1\big)^{\sigma-2}>0
\end{align*}
for all $x_m \in \Omega$. Thus by choosing $\varepsilon_0:=\varepsilon_0(\alpha_{\text{min}},r,\Omega,k)>0$ small enough, we can ensure that
\begin{align*}
\mathbb E_{\mathcal{S}_\I,\mathcal{S}_\II^*}^{x_0}[v_k(x_{m+1})|x_0,\dots,x_m]&\leq \big(1-\delta(x_m)\big)v_k(x_m)+\delta(x_m)F(x_m)  \leq v_k(x_m)
\end{align*}
for all $\varepsilon<\varepsilon_0$. We have shown that the process
$$
M_m:=v_k(x_m)
$$
is a supermartingale, when $P_\II$ uses the strategy $\mathcal{S}_\II^*$ and $P_\I$ uses any strategy $\mathcal{S}_\I$.

Define a boundary function $F_{v_k}: \Gamma_{\varepsilon,\varepsilon} \to \mathbb R$ such that
$$
F_{v_k}=v_k|_{\Gamma_{\varepsilon,\varepsilon}}.
$$
By \Cref{value unique}, we have $u=u_\I$ with $u_\I$ the value function for $P_\I$ defined in \eqref{uIIIdef}. Since $F\leq F_{v_k}$, $(M_m)_{m=1}^\infty$ is a supermartingale, $F_{v_k}$ is bounded and $\tau<\infty$ almost surely, we can estimate with the help of the optimal stopping theorem
\begin{align*}
u(x_0)&=\sup_{\mathcal{S}_\I}\inf_{\mathcal{S}_\II} \mathbb E^{x_0}_{\mathcal{S}_\I,\mathcal{S}_\II} \big[F(x_\tau)\big] \leq \sup_{\mathcal{S}_\I} \mathbb E^{x_0}_{\mathcal{S}_\I,\mathcal{S}_\II^*} \big[F(x_\tau)\big] \\
&\leq \sup_{\mathcal{S}_\I} \mathbb E^{x_0}_{\mathcal{S}_\I,\mathcal{S}_\II^*} \big[F_{v_k}(x_\tau)\big] \leq v_k(x_0).
\end{align*}
By using the boundary values \eqref{extended v}, we can calculate the constants $a$ and $b$ in the function $v_k$ and deduce by \eqref{eq:x0 distance z} that
$$
v_k(x_0)\leq b_{4r}+\theta^k\big(b_U-b_{4r}\big).
$$
Hence by \eqref{estimate k and theta}, we have shown the estimate \eqref{boundary estimate}.
\end{proof}
\end{theorem}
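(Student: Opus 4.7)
The plan is to build a radial barrier from above at the exterior-ball point $z$ supplied by the boundary regularity condition, show via Lemma \ref{W lemma} that it yields a supermartingale once $P_\II$ plays the pull-toward-$z$ strategy, and then iterate the resulting one-scale inequality across shrinking annuli to obtain the geometric decay by $\theta^k$.

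First I would take a barrier of the form $v(x)=a|x-z|^\sigma+b$ with $\sigma<0$ and $a\leq 0$, so that $v$ is radially increasing away from $z$. Combining the two bounds in Lemma \ref{W lemma} against the strategy $\mathcal S_\II^*$ that moves by $-\varepsilon(x_m-z)/|x_m-z|$ on $P_\II$'s turn, the conditional expectation of $v(x_{m+1})$ is at most
\[
v(x_m)+\tfrac{\varepsilon^2}{2}\,a\sigma|x_m-z|^{\sigma-2}\bigl(\beta(x_m)+\alpha(x_m)(\sigma-1)\bigr)+C\varepsilon^3
\]
on any step where the game continues. Since $a\sigma>0$, the second-order term is negative precisely when $\beta(x)+\alpha(x)(\sigma-1)<0$ uniformly; using $\alpha(x)\geq\alpha_{\text{min}}>0$, the choice $\sigma:=2(\alpha_{\text{min}}-1)/\alpha_{\text{min}}$ forces that bracket below $-1$ on $\Omega$, so for sufficiently small $\varepsilon$ the $\varepsilon^3$ error is absorbed and $v(x_m)$ becomes a supermartingale.

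Second, I would set up the iteration on scales. For each $k\geq 1$ I calibrate $a_k,b_k$ so that $v_k:=a_k|\cdot-z|^\sigma+b_k$ equals $b_{4r}:=\sup_{B_{4r}(z)\cap\Gamma_{\varepsilon,\varepsilon}} F$ on $\partial B_{4^{1-k}sr}(z)$ and equals $b_{4r}+\theta^{k-1}(b_U-b_{4r})$ on $\partial B_{4^{2-k}r}(z)$, where $b_U:=\sup F$. The parameter $\theta:=(s^\sigma-2^\sigma)/(s^\sigma-4^\sigma)\in(0,1)$ is the natural contraction ratio produced by the radial form of $v_k$: by direct computation $v_k(x)=b_{4r}+\theta^k(b_U-b_{4r})$ on the intermediate sphere of radius $2\cdot 4^{1-k}r$. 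Since $B_{4^{1-k}r}(y)\cap\ol\Omega_\varepsilon\subset B_{2\cdot 4^{1-k}r}(z)\setminus\overline B_{4^{1-k}sr}(z)$ and $v_k$ is radially increasing, any admissible starting point $x_0$ satisfies $v_k(x_0)\leq b_{4r}+\theta^k(b_U-b_{4r})$, and on the other hand $F\leq v_k$ on $\Gamma_{\varepsilon,\varepsilon}$ because $v_k\geq b_{4r}\geq F$ on $\Gamma_{\varepsilon,\varepsilon}\cap B_{4r}(z)$ and $v_k$ grows beyond $B_{4r}(z)$.

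Third, with $v_k(x_m)$ a bounded supermartingale under $\mathcal S_\II^*$ regardless of $\mathcal S_\I$, and with $\tau<\infty$ almost surely by Lemma \ref{gamestop}, the optional stopping theorem together with Theorem \ref{value unique} (identifying $u=u_\I$) yields $u(x_0)\leq\E_{\mathcal S_\I,\mathcal S_\II^*}^{x_0}[F(x_\tau)]\leq v_k(x_0)\leq b_{4r}+\theta^k(b_U-b_{4r})$. Finally, choosing $k$ large enough that $\theta^k(b_U-\inf F)<\eta$ gives \eqref{boundary estimate}. The main obstacle I expect is the uniform calibration between $\sigma$, $k$, and $\varepsilon_0$: $\sigma$ must depend only on the structural constant $\alpha_{\text{min}}$, but $\varepsilon_0$ will inevitably depend on $k$ (hence on $\eta$) because the coefficient $a_k\sigma|x-z|^{\sigma-2}$ of the favorable second-order term degenerates as $x$ approaches the inner sphere of radius $4^{1-k}sr$, so the cubic error $C\varepsilon^3$ must be controlled against that $k$-dependent coefficient.
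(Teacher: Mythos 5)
Your proposal reproduces the paper's argument essentially verbatim: the same pull-toward-$z$ strategy $\mathcal S_\II^*$ for $P_\II$, the same barrier $v_k(x)=a|x-z|^\sigma+b$ with the identical choice $\sigma=2(\alpha_{\min}-1)/\alpha_{\min}$, the same contraction ratio $\theta=(s^\sigma-2^\sigma)/(s^\sigma-4^\sigma)$ produced by calibrating $v_k$ on the annulus $B_{4^{2-k}r}(z)\setminus\overline B_{4^{1-k}sr}(z)$, and the same optional-stopping conclusion via $u=u_\I$ from Theorem~\ref{value unique}. The only cosmetic difference is that the paper fixes $k$ by extending $F$ to $\Gamma_{1,1}$ so that $k$ is manifestly $\varepsilon$-independent, whereas you use $b_U-\inf F$ directly; you correctly anticipate the resulting $k$-dependence of $\varepsilon_0$ through the degeneration of the coefficient $a\sigma|x-z|^{\sigma-2}$, which is exactly the dependence $\varepsilon_0=\varepsilon_0(\alpha_{\min},r,\Omega,k)$ stated in the paper.
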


\begin{corollary}
Let $\eta>0$ and let $u$ be the solution to \eqref{DPP} with continuous boundary data $F$. Then, there is a constant $\ol{r}\in(0,r_0]$ such that for all $r \in(0,\ol{r}]$ there exist constants $k\geq1$ and $\varepsilon_0>0$ such that for any $y\in \Gamma_{\varepsilon,\varepsilon}$ it holds
$$
u(x_0)-F(y)<\eta
$$
for all $\varepsilon<\varepsilon_0$ and $x_0 \in B_{4^{1-k}r}(y)\cap \ol{\Omega}_\varepsilon$.
\begin{proof}
First, assume that $y\in \partial \Om$. Theorem \ref{boundary regularity} implies that for any $r \in (0,r_0]$, there are constants $k\geq 1$ and $\varepsilon_0>0$ such that
$$
u(x_0)-\sup_{B_{4r}(z)\cap \Gamma_{\varepsilon,\varepsilon}} F<\frac{\eta}{10}
$$
for all $\varepsilon<\varepsilon_0$ and $x_0\in B_{4^{1-k}r}(y)\cap \ol{\Omega}_\varepsilon$. Let $y^*\in \ol{B}_{4r}(z)\cap \Gamma_{\varepsilon,\varepsilon}$ be such that
$$
\sup_{B_{4r}(z)\cap \Gamma_{\varepsilon,\varepsilon}} F<F(y^*)+\frac{\eta}{10}.
$$
The boundary function $F$ is continuous on the compact set $\Gamma_{\varepsilon,\varepsilon}$, so there is a modulus of continuity $\omega_F$ for the function $F$. Thus, we can estimate
\begin{align*}
u(x_0)-F(y)&=u_\varepsilon(x_0)-F(y^*)+F(y^*)-F(y) \\
&<u-\sup_{B_{4r}(z)\cap \Gamma_{\varepsilon,\varepsilon}} F+\frac{\eta}{10}+\omega_F\big(|y^*-y|\big) \\
&<\frac{\eta}{5}+\omega_F\big(|y^*-y|\big).
\end{align*}
It holds $|z-y|<r$ implying the estimate $|y^*-y|\leq |y^*-z|+|z-y|<5r$. We choose $\ol{r}>0$ so small that
$$
\omega_F(5r)<\frac{\eta}{10}
$$
for all $r<\ol{r}$. This yields that for any $r<\ol{r}$, we have
$$
u(x_0)-F(y)<\frac{\eta}{2}
$$
for all $\varepsilon<\varepsilon_0$ and $x_0 \in B_{4^{1-k}r}(y)\cap \ol{\Omega}_\varepsilon$.

Next, assume that $y\not\in \partial \Om$. Pick a point $y_b\in \partial \Om$ such that $y\in B_\varepsilon(y_b)$. We choose $\varepsilon_0>0$ so small that
$$
\omega_F(\varepsilon_0)<\frac{\eta}{2}.
$$
This implies that
$$
\big|F(y)-F(y_b)\big|\leq \omega_F\big(|y-y_b|\big)<\frac{\eta}{2}
$$
for all $\varepsilon<\varepsilon_0$. Since $y_b\in \partial \Om$, we can use the estimates above to get the result.
\end{proof}
\end{corollary}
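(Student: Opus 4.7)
The plan is to reduce to Theorem \ref{boundary regularity} by a case split on whether $y$ lies on $\partial\Omega$ or in the $\varepsilon$-neighborhoods making up $\Gamma_{\varepsilon,\varepsilon}\setminus\partial\Omega$, using the uniform continuity of $F$ to bridge between the two quantities involved. Specifically, Theorem \ref{boundary regularity} bounds $u(x_0)$ above by $\sup_{B_{4r}(z)\cap\Gamma_{\varepsilon,\varepsilon}}F$, whereas the corollary requires a bound in terms of $F(y)$, so the job is to make $|F(y^*)-F(y)|$ small for a near-maximizer $y^*$.

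First I would handle $y\in\partial\Omega$. Apply the Boundary Regularity Condition to obtain $z\in B_r(y)\setminus\Omega$ with $B_{sr}(z)\subset B_r(y)\setminus\Omega$, and then invoke Theorem \ref{boundary regularity} with tolerance $\eta/5$ (say): this provides $k\geq1$ and $\varepsilon_0>0$ with
\[
u(x_0)-\sup_{B_{4r}(z)\cap\Gamma_{\varepsilon,\varepsilon}}F<\frac{\eta}{5}
\]
for every $\varepsilon<\varepsilon_0$ and every $x_0\in B_{4^{1-k}r}(y)\cap\ol{\Omega}_\varepsilon$. Pick a near-maximizer $y^*\in\ol{B}_{4r}(z)\cap\Gamma_{\varepsilon,\varepsilon}$ with $\sup_{B_{4r}(z)\cap\Gamma_{\varepsilon,\varepsilon}}F<F(y^*)+\eta/10$. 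The triangle inequality and $|z-y|<r$ give $|y^*-y|\leq 4r+r=5r$, so continuity of $F$ yields $|F(y^*)-F(y)|\leq\omega_F(5r)$. The subtlety here is that $\Gamma_{\varepsilon,\varepsilon}$ depends on $\varepsilon$; to obtain an $\varepsilon$-uniform modulus $\omega_F$, one first extends $F$ continuously to the fixed compact set $\Gamma_{1,1}$, and then picks $\ol{r}\in(0,r_0]$ so small that $\omega_F(5r)<\eta/10$ whenever $r\leq\ol{r}$. Summing these three contributions gives $u(x_0)-F(y)<\eta/2$ for $y\in\partial\Omega$.

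For $y\in\Gamma_{\varepsilon,\varepsilon}\setminus\partial\Omega$, choose $y_b\in\partial\Omega$ with $|y-y_b|<\varepsilon$ (possible by definition of $\Gamma_{\varepsilon,\varepsilon}$). Shrink $\varepsilon_0$ if needed so that $\omega_F(\varepsilon_0)<\eta/2$; then $|F(y)-F(y_b)|<\eta/2$. Apply the boundary-point case to $y_b$ (noting that the ball $B_{4^{1-k}r}(y)$ is controlled in terms of $y_b$ after possibly increasing $k$ by a bounded amount) to conclude $u(x_0)-F(y_b)<\eta/2$, and add the two estimates.

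The main obstacle is a bookkeeping issue rather than a conceptual one: one has to pin down the quantifier order correctly. The modulus $\omega_F$ must be produced from an $\varepsilon$-independent extension of $F$ (to $\Gamma_{1,1}$), and $\ol{r}$ is fixed first from $\omega_F$; only afterwards is an arbitrary $r\leq\ol{r}$ chosen, and only then are $k$ and $\varepsilon_0$ supplied by Theorem \ref{boundary regularity} applied with that $r$. The second case also requires the final $\varepsilon_0$ to be the minimum of the one from Theorem \ref{boundary regularity} and the one forcing $\omega_F(\varepsilon_0)<\eta/2$; both choices are independent of $y$, so the resulting statement is uniform over $y\in\Gamma_{\varepsilon,\varepsilon}$ as required.
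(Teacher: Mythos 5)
Your proof is correct and follows essentially the same route as the paper's: apply Theorem \ref{boundary regularity} for boundary points $y\in\partial\Omega$, bridge the gap between $\sup_{B_{4r}(z)\cap\Gamma_{\varepsilon,\varepsilon}}F$ and $F(y)$ via a near-maximizer $y^*$ and the modulus of continuity of $F$ (choosing $\ol r$ so that $\omega_F(5r)$ is small), and reduce the case $y\notin\partial\Omega$ to a nearby boundary point $y_b$ with $|y-y_b|<\varepsilon$. You even flag the same bookkeeping subtleties --- extending $F$ to the $\varepsilon$-independent compact set $\Gamma_{1,1}$, and the possible adjustment of $k$ in the second case --- that the paper treats more tacitly.
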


\begin{remark}\label{the estimate from under}
By using a similar argument to \Cref{boundary regularity}, it holds for all $y\in \partial\Omega$ and $\eta>0$ that there exist $\varepsilon_0>0$ and $k\geq 1$ such that
\begin{equation*}
 u(x_0)-\inf_{B_{4r}(z)\cap \Gamma_{\varepsilon,\varepsilon}} F>-\eta
\end{equation*}
for all $0<\varepsilon<\varepsilon_0<1$ and $x_0\in B_{4^{1-k}r}(y)\cap \ol{\Omega}_\varepsilon$. Hence, we have for all $r>0$ small enough, $k\geq 1$ big enough and $\varepsilon>0$ small enough the lower bound
$$
u(x_0)-F(y)>-\eta
$$
for $y\in \Gamma_{\varepsilon,\varepsilon}$ and $x_0\in B_{4^{1-k}r}(y)\cap \ol{\Omega}_\varepsilon$.
\end{remark}

\section{Application}\label{sec:applications}
In this section, we prove that the uniform limit of functions satisfying \eqref{DPP} as $\varepsilon\to 0$ is a weak solution to the normalized homogeneous $p(x)$-Laplace equation
\begin{align}
\begin{split}\label{normplapeq}
\Delta_{p(x)}^Nu(x):=&~\Delta u(x)+\big(p(x)-2\big)\Delta^N_\infty u(x) \\
=&~\Delta u(x)-\Delta^N_\infty u(x)+\big(p(x)-1\big)\Delta^N_\infty u(x) \\
=&~0.
\end{split}
\end{align}
This equation is in a non divergence form so we define weak solutions via viscosity theory. There is a related version of the equation \eqref{normplapeq}, called a strong $p(x)$-Laplacian, in a divergence form, which has recently received attention and studied using distributional weak theory (see for example \cite{adamowiczh11,zhangz12,peres-llanosm13}). For some questions, the viscosity point of view is very natural in the sense that the equation \eqref{normplapeq} has the Pucci operator bounds used for example in Section 4 in \cite{caffarellic95}.

We define for all vectors $x,h\in \mathbb R^n$ and symmetric $n \times n$ matrices $X$
$$
\mathbb F_{p(x)}(x,h,X):=\tr(X)-\sum_{i,j}^n\frac{h_ih_j}{|h|^2}X_{ij}+\big(p(x)-1\big)\sum_{i,j}^n\frac{h_ih_j}{|h|^2}X_{ij}.
$$
These functions are discontinuous, when $h=0$. Therefore, we define viscosity solutions via semicontinuous extensions. For more details about the extensions, see for example \cite{giga06,evanss91}. We denote by $\lambda_{\text{min}}(X)$ and by $\lambda_{\text{max}}(X)$ the smallest and the largest eigenvalues of a symmetric matrix $X$.

\begin{definition}
A continuous function $u:\Omega \to \mathbb R$ is a viscosity solution to the equation \eqref{normplapeq}, if for all $x\in \Omega$ and $\phi\in C^2$ such that $u(x)=\phi(x)$ and $u(y)>\phi(y)$ for $y\not=x$ we have
\begin{align}\begin{split}\label{def viscosity} \left\{\!\!\!
\begin{array}{rclcl}
0 \!\!
&\geq& \!\!\mathbb F_{p(x)}(x,\nabla \phi(x),D^2\phi(x)),\!\!\!\!&\textrm{if}& \nabla
\phi(x)\neq 0, \\[2pt]
0\!\! &\geq&\!\! \lambda_{\text{min}}((p(x)-2)D^2\phi(x))+\tr(D^2\phi(x)),\!\!\!\!&\textrm{if}& \nabla \phi (x)=0.
\end{array} \right.
\end{split}
 \end{align}
We also require that for all $x\in \Omega$ and $\phi\in C^2$ such that $u(x)=\phi(x)$ and $u(y)<\phi(y)$ for $y\not=x$ all the inequalities are reversed, and we use $\lambda_{\text{max}}$ in the role of $\lambda_{\text{min}}$.
\end{definition}
It is equivalent to require that $u-\phi$ has a local strict minimum at $x$ instead of $u(x)=\phi(x)$ and $u(y)>\phi(y)$ for $y\not=x$ (see for example \cite{koike04}). Next, we prove that by passing to a subsequence if necessary, the value function of the game converges uniformly to a solution of the equation \eqref{normplapeq}. To prove that the limiting function $u$ is a viscosity solution to \eqref{normplapeq}, we use an argument similar to the stability principle for viscosity solutions. We apply the DPP \eqref{DPP} for a test function $\phi\in C^2$ and deduce the connection by utilizing the uniform convergence.

\begin{theorem}\label{theorem:converge}
Let $u_\varepsilon$ denote the unique continuous solution to \eqref{DPP} with $\varepsilon>0$ and with a continuous boundary function $F:\Gamma_{\varepsilon,\varepsilon}\to \mathbb R$. Then, there are a function $u:\ol{\Omega}\to \mathbb R$ and a subsequence $\{\varepsilon_i\}$ such that $u_{\varepsilon_i}$ converges uniformly to $u$ on $\ol{\Omega}$ and the function $u$ is a viscosity solution to \eqref{normplapeq} with the boundary data $F$.
\end{theorem}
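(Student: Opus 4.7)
The approach combines a compactness argument with a standard viscosity--stability computation applied to the DPP.

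I would first extract a uniformly convergent subsequence. Proposition~\ref{LEMMA-MEASSOL1} gives the uniform bound $\inf F\le u_\varepsilon\le\sup F$; Theorem~\ref{LOCREG} provides a uniform asymptotic H\"older estimate on every compactly contained ball; and Theorem~\ref{boundary regularity} together with Remark~\ref{the estimate from under} promotes this to an asymptotic (in $\varepsilon$) modulus of continuity up to $\partial\Omega$. A routine interior-vs.-boundary dichotomy then yields an asymptotic modulus of continuity on all of $\overline\Omega$, so that the Arzel\`a--Ascoli theorem produces a subsequence $u_{\varepsilon_i}\to u$ uniformly on $\overline\Omega$ with $u\in C(\overline\Omega)$; the boundary regularity forces $u=F$ on $\partial\Omega$.

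To verify the viscosity equation, fix $\phi\in C^2$ with $u-\phi$ having a strict local minimum at $x_0\in\Omega$ and choose $x_\varepsilon\to x_0$ where $u_{\varepsilon_i}-\phi$ attains a local minimum. Substituting $u_{\varepsilon_i}\ge\phi+[u_{\varepsilon_i}(x_\varepsilon)-\phi(x_\varepsilon)]$ into the DPP and using $\alpha+\beta=1$ gives
\begin{equation*}
\phi(x_\varepsilon)\ \ge\ \tfrac12\Big[\sup_{|\nu|=\varepsilon}W(\phi;x_\varepsilon,\nu)+\inf_{|\nu|=\varepsilon}W(\phi;x_\varepsilon,\nu)\Big].
\end{equation*}
A second-order Taylor expansion of $\phi$ at $x_\varepsilon$, together with the identities $\alpha(x)-\beta(x)/(n+1)=(p(x)-2)/(p(x)+n)$ and $\beta(x)/(n+1)=1/(p(x)+n)$, yields
\begin{equation*}
W(\phi;x_\varepsilon,\nu)-\phi(x_\varepsilon)=\alpha(x_\varepsilon)\langle\nabla\phi,\nu\rangle+\tfrac{\varepsilon^2}{2(p(x_\varepsilon)+n)}\big[(p(x_\varepsilon)-2)\langle D^2\phi\,\hat\nu,\hat\nu\rangle+\Delta\phi\big]+O(\varepsilon^3),
\end{equation*}
uniformly in $|\nu|=\varepsilon$ and for $x_\varepsilon$ in a neighbourhood of $x_0$, where $\hat\nu=\nu/|\nu|$.

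When $\nabla\phi(x_0)\neq 0$ the linear term dominates for small $\varepsilon$; the $\sup$ and $\inf$ extremizers converge to $\pm\nabla\phi(x_0)/|\nabla\phi(x_0)|$, both quadratic contributions converge to $\Delta_\infty^N\phi(x_0)$, the linear terms cancel in the sum, and dividing by $\varepsilon^2$ as $\varepsilon\to 0$ gives $\Delta_{p(x_0)}^N\phi(x_0)\le 0$. The main technical obstacle is the critical case $\nabla\phi(x_0)=0$, in which the extremizers need not converge. To bypass it, let $\nu^\ast_\varepsilon$ be a minimizer of $W(\phi;x_\varepsilon,\cdot)$; since $-\nu^\ast_\varepsilon$ has the same magnitude $\varepsilon$, the inequality $\sup W\ge W(\phi;x_\varepsilon,-\nu^\ast_\varepsilon)$ combined with the expansion (whose linear part flips sign under $\nu\mapsto-\nu$ while the quadratic part is invariant) yields
\begin{equation*}
\sup_\nu W(\phi;x_\varepsilon,\nu)+\inf_\nu W(\phi;x_\varepsilon,\nu)-2\phi(x_\varepsilon)\ \ge\ \tfrac{\varepsilon^2}{p(x_\varepsilon)+n}\big[(p(x_\varepsilon)-2)\langle D^2\phi\,\hat\nu^\ast_\varepsilon,\hat\nu^\ast_\varepsilon\rangle+\Delta\phi\big]+O(\varepsilon^3).
\end{equation*}
The universal bound $(p-2)\langle D^2\phi\,\hat\nu,\hat\nu\rangle\ge\lambda_{\min}((p-2)D^2\phi)$ then gives $\lambda_{\min}((p(x_0)-2)D^2\phi(x_0))+\Delta\phi(x_0)\le 0$ in the limit, matching exactly the semicontinuous extension appearing in the paper's viscosity definition at critical points. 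The subsolution case (strict local maximum, $\lambda_{\max}$) follows by the symmetric argument with the roles of $\sup$ and $\inf$ interchanged.
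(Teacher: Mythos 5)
Your argument is correct and matches the paper's proof in structure: compactness from the interior and boundary regularity estimates together with Arzel\`a--Ascoli, a viscosity-stability computation at a strict local minimum of $u-\phi$, a second-order Taylor expansion of the test function, and the pairing of the minimizer $\nu^*_\varepsilon$ with $-\nu^*_\varepsilon$ so the first-order terms cancel. Your single bound $(p-2)\langle D^2\phi\,\hat\nu,\hat\nu\rangle\ge\lambda_{\text{min}}\big((p-2)D^2\phi\big)$ is marginally cleaner than the paper's case split on the sign of $p(x)-2$; the only slip is that since $\phi$ is only $C^2$ the Taylor remainder should be written $o(\varepsilon^2)$ rather than $O(\varepsilon^3)$, which is harmless after dividing by $\varepsilon^2$.
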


\begin{proof}
To find the function $u$, we use a variant of the Arzel\`{a}-Ascoli's theorem (see for example \cite[p.\,15-16]{manfredipr12}). By Theorems \ref{LOCREG} and \ref{boundary regularity} together with \Cref{the estimate from under}, the assumptions for Arzel\`{a}-Ascoli's theorem are satisfied and hence, there exist a continuous function $u$ on $\ol{\Omega}$ with the boundary values $F$ and a subsequence $\{\varepsilon_i\}$ such that $u_{\varepsilon_i}\to u$ uniformly on $\ol{\Omega}$ as $i\to \infty$. Thus, it is enough to show that $u$ is a viscosity solution to \eqref{normplapeq}.

Let $x\in\Omega$ and $\phi\in C^2$ such that $u-\phi$ has a strict local minimum at $x$. Then, we have
$$
\inf_{B_r(x)}(u-\phi)=u(x)-\phi(x)<u(z)-\phi(z)
$$
for some $r>0$ and for all $z\in B_r(x)\setminus \{x\}$. The uniform convergence yields
$$
\inf_{B_r(x)}(u_\varepsilon-\phi)<u_\varepsilon(z)-\phi(z)
$$
for all $z\in B_r(x)\setminus \{x\}$ and for all $\varepsilon>0$ small enough. Thus, we can use the definition of the infimum and deduce that for all $\eta_\varepsilon>0$, there exists a point $x_\varepsilon\in B_r(x)\subset \Omega$ such that
$$
u_\varepsilon(x_\varepsilon)-\phi(x_\varepsilon)\leq u_\varepsilon(z)-\phi(z)+\eta_\varepsilon
$$
for all $z\in B_r(x)$ and $\varepsilon>0$ small enough with $x_\varepsilon \to x$ as $\varepsilon \to 0$. We define $\varphi:=\phi+u_\varepsilon(x_\varepsilon)-\phi(x_\varepsilon)$ so that
\begin{equation*}
\varphi(x_\varepsilon)=u_\varepsilon(x_\varepsilon)\text{ and }u_\varepsilon(z)\geq\varphi(z)-\eta_\varepsilon
\end{equation*}
for all $z\in B_r(x)$. Therefore, these together with the fact that $u_\varepsilon$ is a solution to \eqref{DPP} imply
\begin{align*}
u_\varepsilon(x_\varepsilon)=T_\varepsilon u_\varepsilon(x_\varepsilon)&\geq T_\varepsilon\varphi(x_\varepsilon)-(1-\delta(x_\varepsilon))\eta_\varepsilon \\
&=T_\varepsilon\phi(x_\varepsilon)-\eta_\varepsilon+u_\varepsilon(x_\varepsilon)-\phi(x_\varepsilon)+\delta(x_\varepsilon)\Lambda_\varepsilon,
\end{align*}
where we use the monotonicity of $T_\varepsilon$ and denote $\Lambda_\varepsilon:=\eta_\varepsilon+\phi(x_\varepsilon)-u_\varepsilon(x_\varepsilon)$. This inequality yields
\begin{equation}\label{varphi est}
\eta_\varepsilon \geq T_\varepsilon\phi(x_\varepsilon)-\phi(x_\varepsilon)+\delta(x_\varepsilon)\Lambda_\varepsilon.
\end{equation}

By the Taylor's expansion of $\phi$ at $x_\varepsilon$ with $\abs\nu=1$, we get
\begin{align}
				\frac{1}{2}\phi(x_\varepsilon+\varepsilon\nu)+\frac{1}{2}\phi(x_\varepsilon-\varepsilon\nu)=\phi(x_\varepsilon)+\frac{\varepsilon^2}{2}\prodin{D^2\phi(x_\varepsilon) \nu}{\nu}+o(\varepsilon^2), \label{Taylor1}\\
				\dashint_{B_\varepsilon^\nu} \phi(x_\varepsilon+h)\dL(h)=\phi(x_\varepsilon)+\frac{\varepsilon^2}{2(n+1)}\Delta_{\nu^\bot}\phi(x_\varepsilon)+o(\varepsilon^2).\label{Taylor2}
\end{align}
In \eqref{Taylor2}, we utilize the orthonormal basis $\mathcal V$ including $\nu$ and an orthonormal basis for $\nu^\bot$ to obtain
$$
\frac{1}{2}\dashint_{B_\varepsilon^\nu} \prodin{D^2\phi(x_\varepsilon)h}{h}\dL(h)=\frac{\varepsilon^2}{2(n+1)}\Delta_{\nu^\bot}\phi(x_\varepsilon)
$$
in a similar way to \cite{manfredipr10}. Here, the operator $\Delta_{\nu^\bot}$ denotes the Laplacian on the plane $\nu^\bot$, i.e.,
$$
\Delta_{\nu^\bot}\phi(x_\varepsilon)=\sum_{j=2}^{n}\prodin{D^2\phi(x_\varepsilon) \nu_j}{\nu_j}
$$
with $\nu_2,\dots,\nu_n$ the orthonormal basis vectors for $\nu^\bot$. Observe that
\begin{equation}\label{laplacian coordinates}
\Delta \phi(z)=\tr(D^2\phi(z))=\Delta_{\nu^\bot}\phi(z)+\prodin{D^2\phi(z) \nu}{\nu}
\end{equation}
for any $|\nu|=1$ and $z\in \Omega$. To see this, we apply the orthonormal basis $\mathcal V$ and a change of variables $x_1=\nu,x_2=\nu_2,\dots, x_n=\nu_n$. Then, we deduce the equation \eqref{laplacian coordinates} by the chain rule.

There exists a vector $\nu_{\min}:=\nu_{\min}(\varepsilon)$ minimizing
\begin{equation}\label{Wphi}
				\alpha(x_\varepsilon)\phi(x_\varepsilon+\varepsilon\nu)+\beta(x_\varepsilon)\dashint_{B_\varepsilon^\nu} \phi(x_\varepsilon+h)\dL(h)
\end{equation}
with $\abs\nu=1$. Thus by \eqref{Taylor1} and \eqref{Taylor2} with $\nu=\nu_{\min}$ and the fact that $-\nu^\bot\equiv\nu^\bot$, we obtain
\begin{align*}
				T_\varepsilon\phi(x_\varepsilon) =&\frac{1-\delta(x_\varepsilon)}{2}\sup_{\abs\nu=1}\pare{\alpha(x_\varepsilon)\phi(x_\varepsilon+\varepsilon\nu)+\beta(x_\varepsilon)\dashint_{B_\varepsilon^\nu} \phi(x_\varepsilon+h)\dL(h)} \\
				& + \frac{1-\delta(x_\varepsilon)}{2}\inf_{\abs\nu=1}\pare{\alpha(x_\varepsilon)\phi(x_\varepsilon+\varepsilon\nu)+\beta(x_\varepsilon)\dashint_{B_\varepsilon^\nu}\phi(x_\varepsilon+h)\dL(h)} \\
				& +  \delta(x_\varepsilon) F(x_\varepsilon) \\
							~\geq&  (1-\delta(x_\varepsilon))\frac{\alpha(x_\varepsilon)}{2}\braces{\phi(x_\varepsilon+\varepsilon\nu_{\min})+\phi(x_\varepsilon-\varepsilon \nu_{\min})} \\
				~&+(1-\delta(x_\varepsilon))\beta(x_\varepsilon)\dashint_{B_\varepsilon^{\nu_{\min}}} \phi(x_\varepsilon+h)\dL(h) +\delta(x_\varepsilon)F(x_\varepsilon) +o(\varepsilon^2) \\
								~= & (1-\delta(x_\varepsilon))\phi(x_\varepsilon) +(1-\delta(x_\varepsilon))\frac{\beta(x_\varepsilon)\varepsilon^2}{2(n+1)}\Big\{\Delta_{\nu_{\min}^\bot}\phi(x_\varepsilon)\\
				& \hspace{110pt} +(p(x_\varepsilon)-1)\prodin{D^2\phi(x_\varepsilon) \nu_{\min}}{\nu_{\min}}\Big\} \\
				~& + \delta(x_\varepsilon)F(x_\varepsilon) +o(\varepsilon^2).
\end{align*}
By this estimate and \eqref{varphi est}, we have
\begin{equation}
\begin{split} \label{DPPphi}
				 \eta_\varepsilon \geq ~& -\phi(x_\varepsilon)+(1-\delta(x_\varepsilon))\Bigg\{\phi(x_\varepsilon)+\frac{\beta(x_\varepsilon)\varepsilon^2}{2(n+1)}\Big[\Delta_{\nu_{\min}^\bot}\phi(x_\varepsilon) \\
				&\hspace{143pt} +(p(x_\varepsilon)-1)\prodin{D^2\phi(x_\varepsilon) \nu_{\min}}{\nu_{\min}}\Big]\Bigg\} \\
				~& + \delta(x_\varepsilon)\big(F(x_\varepsilon)+\Lambda_\varepsilon\big) +o(\varepsilon^2).
\end{split}
\end{equation}

First, assume that $|\nabla\phi(x)|\neq 0$. Then by $x_\varepsilon\to x$ as $\varepsilon \to 0$, it turns out that
\begin{equation}\label{v0}
				\nu_{\min}\rightarrow -\frac{\nabla\phi(x)}{|\nabla\phi(x)|}=:\nu_{\min}^*
\end{equation}
as $\varepsilon \to 0$. Here, we need the fact that $\alpha_{\text{min}}>0$, i.e., $p_{\text{min}}>1$. In addition by \eqref{laplacian coordinates}, we have
\begin{align}\label{deltapiv0}
				\Delta_{(\nu_{\min}^*)^\bot}\phi(x)+(p(x)-1)\prodin{D^2\phi(x) \nu_{\min}^*}{\nu_{\min}^*}=\Delta_{p(x)}^N\phi(x).
\end{align}
Choose $\eta_\varepsilon=o(\varepsilon^2)$, divide both sides in \eqref{DPPphi} by $\varepsilon^2$ and let $\varepsilon\rightarrow 0$. Therefore by \eqref{DPPphi}, \eqref{v0}, \eqref{deltapiv0} and the facts that $x_\varepsilon\to x $ and $\delta(x_\varepsilon)\varepsilon^{-2}\rightarrow 0$ as $\varepsilon \to 0$, we obtain
\begin{equation*}
				0 \geq \frac{\beta(x)}{2(n+1)}\Delta_{p(x)}^N\phi(x).
\end{equation*}
By $\beta(x)\geq \beta_{\text{min}}>0$, we get the inequality $\Delta_{p(x)}^N\phi(x)\leq 0$.

Next, assume that $|\nabla\phi(x)|= 0$. By \eqref{laplacian coordinates}, we have
\begin{align}
\begin{split}\label{last one}
&\Delta_{\nu_{\min}^\bot}\phi(x_\varepsilon) +(p(x_\varepsilon)-1)\prodin{D^2\phi(x_\varepsilon) \nu_{\min}}{\nu_{\min}}\\
&=\Delta \phi(x)+(p(x_\varepsilon)-2)\prodin{D^2\phi(x_\varepsilon) \nu_{\min}}{\nu_{\min}}.
\end{split}
\end{align}
Assume that $p(x)>2$. Then by the continuity of $p$ and $x_\varepsilon \to x$ as $\varepsilon \to 0$, we have $p(x_\varepsilon)>2$ for all $\varepsilon$ small enough. Thus, we can estimate
$$
(p(x_\varepsilon)-2)\prodin{D^2\phi(x_\varepsilon) \nu_{\min}}{\nu_{\min}}\geq (p(x_\varepsilon)-2)\lambda_{\text{min}}(D^2\phi(x_\varepsilon))
$$
for all $\varepsilon$ small enough. As above, this estimate together with \eqref{DPPphi}, \eqref{last one} and the continuity of $z \mapsto \lambda_{\text{min}}(D^2\phi(z))$ imply
$$
0 \geq \Delta \phi(x)+\lambda_{\text{min}}((p(x)-2)D^2\phi(x)).
$$
The cases $p(x)=2$ and $p(x)<2$ can be treated in a similar fashion.

To show the required reverse inequality, we choose $\abs{\nu_{\max}}=1$ such that it maximizes \eqref{Wphi}, and we consider the reverse inequality with $\phi\in C^2$ such that $u-\phi$ has a strict maximum at $x$. The proof is analogous to the above.
\end{proof}

\section*{Acknowledgements.} Part of this research was done during a visit of \'{A}.\ A.\ to the University of Jyv\"{a}skyl\"{a} in 2015. \'{A}.\ A.\ was partially supported by grants MTM2011-24606, MTM2014-51824-P and 2014 SGR 75. M. P. was supported by the Academy of Finland project \#260791.

\medskip

\end{document}